\renewcommand\subsubsection{\@startsection{subsubsection}{3}{\z@}%
                                     {-3.25ex\@plus -1ex \@minus -.2ex}%
                                     {-0.5em}
                                     {\normalfont\normalsize\bfseries}}
\theoremstyle{plain}
\newtheorem{Thm}{Theorem}
\newtheorem{Prop}[Thm]{Proposition}
\newtheorem{Lem}[Thm]{Lemma}
\newtheorem{Cor}[Thm]{Corollary}
\newtheorem{Con}[Thm]{Conjecture}
\theoremstyle{definition}
\newtheorem{Rem}[Thm]{Remark}
\newtheorem{Eg}[Thm]{Example}
\newtheoremstyle{named}{}{}{}{}{\bfseries}{.}{.5em}{\thmnote{#3}#1}
\theoremstyle{named}
\DeclareMathOperator{\ad}{ad}
\DeclareMathOperator{\Ch}{Ch}
\DeclareMathOperator{\sCh}{\mathbf{Ch}}
\DeclareMathOperator{\diag}{diag}
\DeclareMathOperator{\Gal}{Gal}
\DeclareMathOperator{\GL}{GL}
\DeclareMathOperator{\Id}{Id}
\DeclareMathOperator{\Irr}{Irr}
\DeclareMathOperator{\lisom}{\!\!\smash{\begin{array}{c}\sim\\[-1em]
\longrightarrow\end{array}}\!\!}
\DeclareMathOperator{\kk}{\mathds{k}}
\DeclareMathOperator{\ladic}{\bar{\mathbb{Q}}_{\ell}}
\DeclareMathOperator{\reg}{reg}
\DeclareMathOperator{\Rep}{Rep}
\DeclareMathOperator{\sRep}{\mathbf{Rep}}
\DeclareMathOperator{\SL}{SL}
\DeclareMathOperator{\Spec}{Spec}
\DeclareMathOperator{\SymF}{\mathbf{Sym}}
\DeclareMathOperator{\tr}{Tr}
\DeclareMathOperator{\X}{\mathbf{x}}
\DeclareMathOperator{\Z}{\mathbf{z}}
\DeclareMathOperator{\lb}{\!<\!}
\DeclareMathOperator{\rb}{\!>\!}
\DeclareMathOperator{\ds}{\!/\mkern-2mu/\mkern-2mu}
\newcommand*\circled[1]{
  \tikz[baseline=(char.base)]\node[shape=circle,draw,inner sep=0.2pt,font=\tiny,minimum size=8pt] (char) {#1};}
\newcommand*{\rom}[1]{\expandafter\@slowromancap\romannumeral #1@}
\renewcommand{\descriptionlabel}[1]{\hspace\labelsep\upshape\bfseries #1.}
\let\orgdescriptionlabel\descriptionlabel
\renewcommand*{\descriptionlabel}[1]{%
  \let\orglabel\label
  \let\label\@gobble
  \phantomsection
  \edef\@currentlabel{#1}%
  \let\label\orglabel
  \orgdescriptionlabel{#1}%
}
\title{E-Polynomials of Generic $\mathbf{\GL_n\rtimes\lb\sigma\rb}~$-Character Varieties: Unbranched Case}
\author{Cheng Shu}
\colorlet{ivory}{Apricot!30!}
\colorlet{space}{black!85!}
\definecolor{bgc}{RGB}{29, 44, 46}
\definecolor{txt}{RGB}{223, 222, 189}
\definecolor{cmd}{RGB}{206, 151, 88}
\begin{document}
\let\bs\boldsymbol
\maketitle
\begin{abstract}
For any unbranched double covering of compact Riemann surfaces, we study the associated character varieties that are unitary in the global sense, which we call $\GL_n\rtimes\lb\sigma\rb~$-character varieties. We introduce $k>0$ punctures on the surface, and restrict the monodromies around the punctures to generic semi-simple conjugacy classes in $\GL_n$, and compute the E-polynomials of these character varieties using the character table of $\GL_n(q)$. The result is expressed as the inner product of certain symmetric functions. We are then led to a conjectural formula for the mixed Hodge polynomial, which is built out of (modified) Macdonald polynomials, their self-pairings, and self-pairings of wreath Macdonald polynomials. 
\end{abstract}

\tableofcontents
\addtocontents{toc}{\protect\setcounter{tocdepth}{-1}}
\setcounter{tocdepth}{1}
\numberwithin{Thm}{section}
\numberwithin{equation}{subsection}
\addtocontents{toc}{\protect\setcounter{tocdepth}{1}}
\section{Introduction}
In this article, we will call Macdonald polynomials what are usually called modified Macdonald polynomials in the literature, since only this version of Macdonald polynomial appears.
\subsection{Background}\hfill

Given a compact Riemann surface and a reductive algebraic group $G$, one can define three moduli spaces: the moduli space of Higgs $G$-bundles, the moduli space of flat $G$-connections and the $G$-character variety. The cohomologies of these spaces are identified via the non abelian Hodge theory and the Riemann-Hilbert correspondence. Early study of these cohomology groups focused on the computation of Betti numbers. The method was to exploit the $\mathbb{C}^{\ast}$-action on the moduli of Higgs bundles. See for example \cite{Hit} and \cite{Got}. However, such a computation is only manageable when the group has small ranks. In general, counting points over finite fields proved to be a much more efficient way to compute the cohomology. 

The project of counting points of character varieties over finite fields was initiated by Hausel and Rodriguez-Villegas in \cite{HRV}. For character varieties, the counting does not produce the Poincar\'e polynomial. Instead, one obtains the E-polynomial of the character variety over complex numbers, according to a theorem of Katz \cite[Appendix]{HRV}. The E-polynomial has as coefficients the alternating sum of the dimensions of the graded pieces of the weight filtration. The computation relies on our knowledge of the character table of the finite group of Lie type $\GL_n(q)$. With the E-polynomial obtained, in a subsequent work joint with Letellier, they proposed a conjectural formula for the mixed Hodge polynomial. This polynomial encodes the dimension of each graded piece of the weight filtration in each cohomological degree. Remarkably, the formula is dominated by (modified) Macdonald polynomials and their self-pairings. Recently, by counting Higgs bundles over finite fields, Schiffmann and Mellit have obtained a closed formula for the Poincar\'e polynomial of the moduli space of (parabolic) Higgs bundles with $G=\GL_n$ and arbitrary $n$, thus proving the conjecture of Hausel-Letellier-Rodriguez-Villegas at the level of Poincar\'e polynomials. See \cite{Sch}, \cite{Me3} and \cite{Me4}. Some other related conjectures have also been proved by Mellit. See \cite{Me1} and \cite{Me2}.

\subsection{$\GL_n\lb\sigma\rb~$-Character Varieties}\label{subsec-Char-Var}\hfill

In this article, we study character varieties that are unitary in the global sense. Let $\Sigma'$ be a compact Riemann surface of genus $g$, and let $p':\tilde{\Sigma}'\rightarrow\Sigma'$ be an unbranched double covering. We will assume $g>0$ throughout, so that such a covering exists. We introduce $k>0$ punctures on $\Sigma'$, resulting in the non compact surface $\Sigma$. Denote by $p:\tilde{\Sigma}\rightarrow\Sigma$ the restriction of $p'$ to $\Sigma$. Let $\mathcal{C}=(C_j)_{1\le j\le k}$ be a tuple of semi-simple conjugacy classes in $\GL_n$.  Finally, let $\sigma$ be an order 2 exterior automorphism of $\GL_n$, and denote by $\GL_n\lb\sigma\rb$ the semi-direct product $\GL_n\rtimes\lb\sigma\rb$. Our $\GL_n\lb\sigma\rb~$-character variety is defined by
$$
\Ch_{\mathcal{C}}(\Sigma):=\left\{(A_i,B_i)_i(X_j)_j\in \GL_n^{2g}\times \prod_{j=1}^{k} C_j\mid A_1\sigma(B_1)A_1^{-1}B_1^{-1}\prod_{i=2}^g[A_i,B_i]\prod_{j=1}^{k}X_j=1\right\}\ds\GL_n,
$$ where the bracket means the commutator. Note that $A_1\sigma(B_1)A_1^{-1}B_1^{-1}$ is just $[A_1\sigma,B_1]$. Also note that the defining equation is different from the branched case \cite[\S 1.3]{Shu3}. This is the moduli space of the homomorphisms $\rho$ that fit into the following commutative diagram:
$$
\begin{tikzcd}[row sep=2.5em, column sep=0.2em]
\pi_1(\Sigma) \arrow[dr, swap, "q_1"] \arrow[rr, "\rho"] && \GL_n\lb\sigma\rb \arrow[dl, "q_2"]\\
& \Gal(\tilde{\Sigma}/\Sigma)
\end{tikzcd}
$$
where $\Gal(\tilde{\Sigma}/\Sigma)\cong\mathbb{Z}/2\mathbb{Z}$ is the group of covering transformations, $q_1$ is the quotient by $\pi_1(\tilde{\Sigma})$, and $q_2$ is the quotient by the identity component. The monodromy of $\rho$ at the punctures must lie in the corresponding conjugacy classes $(C_j)_{1\le j\le k}$.

The character variety $\Ch_{\mathcal{C}}(\Sigma)$, via the non-abelian Hodge theory,  is diffeomorphic to the moduli space of (parabolic) unitary Higgs bundles, that is, torsors under the unitary group scheme over $\Sigma$ equipped with a Higgs field. The cohomology of this kind of moduli spaces is interesting in representation theory. See for example the work of Laumon and Ng\^o \cite{LN}. 

In a previous article \cite{Shu3}, the author has studied the case of branched coverings. Using the character table of $\GL_n(q)\lb\sigma\rb$, a formula for the E-polynomial of the character variety has been obtained. Surprisingly, the formula seems to suggest that the mixed Hodge polynomial is dominated by Macdonald polynomials and wreath Macdonlad polynomials (See below). Roughly speaking, a wreath Macdonald polynomial appears precisely where the homomorphism $\rho$ enters the twisted component $\GL_n\sigma$. In the current situation, a generator of $\pi_1(\Sigma)$ associated to a genus is mapped into $\GL_n\sigma$. We will see that this generator should give rise to a self-pairing of wreath Macdonald polynomial.

\subsection{Macdonald Polynomials and Wreath Macdonald Polynomials}\hfill

We write $\SymF[\Z]$ for the ring of symmetric functions in the variables $\Z=(z_1,z_2,\ldots)$ over the base field $\mathbb{Q}(z,w)$, the function field in $z$ and $w$. The ring $\SymF[\Z]$ is equipped with an inner product $\langle-,-\rangle$, called the Hall inner product. It can be deformed into another inner product $\langle-,-\rangle_{z,w}$. We denote by $\mathcal{P}$ the set of all partitions, including the empty one. For any $\lambda\in\mathcal{P}$, denote by $H_{\lambda}(\Z;z,w)$ the corresponding Macdonald polynomial. Denote by $$N_{\lambda}(z,w)=\langle H_{\lambda}(\Z;z,w),H_{\lambda}(\Z;z,w)\rangle_{z,w}$$the self-pairing of a Macdonald polynomial under the deformed inner product. It has an explicit expression: (See \cite[Corollary 5.4.8]{Hai}.)
\begin{equation}\label{Intro-N(q,t)}
N_{\lambda}(z,w)=\prod_{x\in\lambda}(z^{a(x)+1}-w^{l(x)})(z^{a(x)}-w^{l(x)+1}),
\end{equation}
where $x$ runs over the boxes in the Young diagram of $\lambda$, $a(x)$ and $l(x)$ denote the arm-length and leg-length respectively. We will also need a deformation of it: \begin{equation}
N_{\lambda}(u,z,w)=\prod_{x\in\lambda}(z^{a(x)+1}-uw^{l(x)})(z^{a(x)}-u^{-1}w^{l(x)+1}).
\end{equation}

Wreath Macdonald polynomials were introduced by Haiman in \cite{Hai}, whose existence was proved by Bezrukavnikov and Finkelberg in \cite{BF}. These are a family of (tensor) symmetric functions depending on an integer $r>1$ and an $r$-core. An $r$-core is simply a partition $\lambda$ such that there is no box $x\in\lambda$ with $h(x)\equiv0\!\!\!\mod r$, where $$h(x)=a(x)+l(x)+1$$ denotes the \textit{hook length} of $x$. In this article, we set $r=2$. The wreath Macdonald polynomials with a given 2-core are  parametrised by $\mathcal{P}^2=\mathcal{P}\times\mathcal{P}$, and the set of all wreath Macdonald polynomials, with varying 2-cores, is in bijection with $\mathcal{P}$. For any $\lambda\in\mathcal{P}$, write
\begin{equation}
\tilde{N}_{\lambda}(z,w)=\!\!\!\prod_{\substack{x\in\lambda\\h(x)\equiv0\!\!\!\mod 2}}\!\!\!(z^{a(x)+1}-w^{l(x)})(z^{a(x)}-w^{l(x)+1}),
\end{equation}
and 
\begin{equation}
\tilde{N}_{\lambda}(u,z,w)=\!\!\!\prod_{\substack{x\in\lambda\\h(x)\equiv0\!\!\!\mod 2}}\!\!\!(z^{a(x)+1}-uw^{l(x)})(z^{a(x)}-u^{-1}w^{l(x)+1}).
\end{equation}
Up to a "wreath-$\nabla$" term (see \cite[\S 8.2.2]{Shu3}) that plays no role in what follows, these are the self-pairing, and the deformed self-pairing, of the wreath Macdonald polynomial corresponding to $\lambda$.
 
\subsection{The Generating Series}\hfill

We define two generating series, which only depend on $g$ and $k$:
\begingroup
\allowdisplaybreaks
\begin{align*}
\Omega_{\bullet}(z,w):=&\sum_{\lambda\in\mathcal{P}}\frac{N_{\lambda}(zw,z^2,w^2)^{g-1}\tilde{N}_{\lambda}(zw,z^2,w^2)}{\tilde{N}_{\lambda}(z^2,w^2)}\prod^k_{j=1}H_{\lambda}(\Z_j;z^2,w^2)\\
\Omega_{\ast}(z,w):=&\sum_{\lambda\in\mathcal{P}}\frac{N_{\lambda}(zw,z^2,w^2)^{2g-1}}{N_{\lambda}(z^2,w^2)}\prod^k_{j=1}H_{\lambda}(\Z_j;z^2,w^2)^2,
\end{align*}
\endgroup
where $\{\mathbf{z}_1,\ldots,\mathbf{z}_k\}$ are independent variables.
\begin{Rem}
According to \cite{Shu3}, the "wreath-$\nabla$" term of the self-pairing of a wreath Macdonald polynomial should not be deformed. Therefore in $\Omega_{\bullet}(z,w)$, these terms cancel out. This is the reason why we omit them from the definition of $\tilde{N}_{\lambda}(z,w)$.
\end{Rem}

The reader can observe that the series $\Omega_{\bullet}(z,w)$ is roughly what we get from the series $\Omega_0(z,w)$ and $\Omega_1(z,w)$ defined in \cite{Shu3} by pretending that the number of ramification points is equal to 0. The essential difference is that there is no restriction on the 2-core in $\Omega_{\bullet}(z,w)$, and this is an obstruction to a uniform formula that works in both the branched case and the unbranched case. The origin of this difference lies in the representation theory of $\GL_n(q)\lb\sigma\rb$. As is explained in \cite{Shu3}, if a unipotent character of $\GL_n(q)$ corresponds to a partition with 2-core larger than $(1)$, then its extension to $\GL_n(q)\sigma$ must vanish on every semi-simple conjugacy class. In the unbranched case, the conjugacy classes are contained in $\GL_n(q)$, and so impose no restriction on the 2-core.

The series $\Omega_{\ast}(z,w)$ resembles the series defined by Hausel-Letellier-Rodriguez-Villegas for a Riemann surface of genus $2g-1$, but the Macdonald polynomial is squared.

\subsection{The Conjectural Mixed Hodge Polynomial}\hfill

For any tuple of semi-simple conjugacy classes $\mathcal{C}=(C_j)_{1\le j\le k}$, let $\bs\mu=(\mu_j)_{1\le j\le k}$ be the tuple of partitions of $n$ such that each $\mu_j$ is defined by the multiplicities of the eigenvalues of $C_j$. Let $\Ch_{\mathcal{C}}$ be the $\GL_n\lb\sigma\rb~$-character variety defined by $\mathcal{C}$. Denote by $d$ the dimension of $\Ch_{\mathcal{C}}$, which only depend on $\bs\mu$, $g$ and $n$. For any $\lambda\in\mathcal{P}$, denote by $h_{\lambda}(\Z)$ the corresponding complete symmetric function.
The summand in $\Omega_{\ast}$ corresponding to the empty partition is equal to 1, therefore we can apply the formal expansion $$\frac{1}{1+x}=\sum_{m\ge 0}(-1)^mx^m$$ to $1+x=\Omega_{\ast}$.
Define the rational functions in $z$ and $w$:
\begin{equation}\label{intro-Hmu}
\mathbb{H}_{\bs\mu}(z,w):=~\left\langle\frac{\Omega_{\bullet}(z,w)^2}{\Omega_{\ast}(z,w)},\prod^{k}_{j=1}h_{\mu_j}(\Z_j)\right\rangle.
\end{equation}
The definition resembles the branched case, but does not depend on the parity of $n$.

\begin{Con}\label{The-Conj}
Let $\mathcal{C}=(C_j)_{1\le j\le k}$ be a strongly generic tuple of semi-simple conjugacy classes in $\GL_n$. Then 
\begin{itemize}
\item[(i)] The rational function $\mathbb{H}_{\bs\mu}(z,w)$ is a polynomial. It has degree $d$ in each variable, and each monomial in it has even degree. Moreover, $\mathbb{H}_{\bs\mu}(-z,w)$ has non-negative integer coefficients.
\item[(ii)] The mixed Hodge polynomial $H_c(\Ch_{\mathcal{C}};x,y,t)$ is a polynomial in $q:=xy$ and $t$.
\item[(iii)] The mixed Hodge polynomial is given by the following formula:
\begin{equation*}
H_c(\Ch_{\mathcal{C}};q,t)=(t\sqrt{q})^d\mathbb{H}_{\bs\mu}(-t\sqrt{q},\frac{1}{\sqrt{q}}).
\end{equation*}
In particular, it only depends on $\bs\mu$, and not on the generic eigenvalues of $\mathcal{C}$.
\end{itemize}
\end{Con}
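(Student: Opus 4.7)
My plan is to attack Conjecture~\ref{The-Conj} in stages, following the blueprint of Hausel-Letellier-Rodriguez-Villegas~\cite{HRV} adapted to the twisted setting developed in \cite{Shu3}. The only part within immediate reach is the $t=-1$ specialisation of (iii), i.e.\ the E-polynomial identity. Setting $t=-1$ in the formula gives $E(\Ch_{\mathcal{C}};q) = (-\sqrt q)^{d}\,\mathbb{H}_{\bs\mu}(\sqrt q, 1/\sqrt q)$, and this is exactly what I expect to obtain by applying Katz's theorem \cite[Appendix]{HRV} to the point count of $\Ch_{\mathcal{C}}$ over $\mathbb{F}_q$: Frobenius's mass formula expresses the count as a sum over irreducible characters of $\GL_n(q)\lb\sigma\rb$, the character table of this group produces the Macdonald and wreath Macdonald factors (the latter arising precisely from the generator sent into the twisted component $\GL_n\sigma$, as anticipated in \S\ref{subsec-Char-Var}), and the Cauchy identity assembles them into $\langle \Omega_{\bullet}^2/\Omega_{\ast},\prod_j h_{\mu_j}\rangle$. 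Verifying this specialisation is essentially the content of the paper's main E-polynomial theorem.

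For part (i), I would establish polynomiality of $\mathbb{H}_{\bs\mu}$ by a pole-cancellation argument on the hook factors $N_{\lambda}$ and $\tilde N_{\lambda}$, in the spirit of \cite[\S 3]{HRV}. Expanding $1/\Omega_{\ast}$ as the geometric series in $\Omega_{\ast}-1$ and pairing term by term with $\Omega_{\bullet}^{2}\cdot\prod_j h_{\mu_j}$, the apparent poles coming from $1/N_{\lambda}(z^2,w^2)$ should be absorbed by the numerator factor $N_{\lambda}(zw,z^2,w^2)^{2g-1}$ once the expansions are reorganised. The degree bound in each variable is tracked by the arm/leg accounting in $N_{\lambda}(zw,z^2,w^2)^{g-1}\tilde N_{\lambda}(zw,z^2,w^2)$, and evenness of the monomial degrees reflects that $z^{2}$ and $w^{2}$ are the natural variables in every Macdonald specialisation appearing here. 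Integrality and positivity of the coefficients of $\mathbb{H}_{\bs\mu}(-z,w)$ should then follow from producing a Schur-positive expansion at the symmetric-function level, together with a careful sign analysis of the alternating sum coming from $1/\Omega_{\ast}$.

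Parts (ii) and (iii) require genuine geometric input. Via non-abelian Hodge theory, $\Ch_{\mathcal{C}}$ is diffeomorphic to a moduli space $\mathcal{M}$ of parabolic unitary Higgs bundles on $\Sigma'$, for the unitary group scheme arising from $p':\tilde\Sigma'\to\Sigma'$; cf.\ \cite{LN}. The plan is then twofold: (a) establish purity and a $P=W$ type statement for $\mathcal{M}$ in the spirit of Mellit, upgrading the weight filtration on $H^{\ast}_c(\Ch_{\mathcal{C}})$ to the perverse Leray filtration on $\mathcal{M}$ with respect to its Hitchin map; (b) refine the $\mathbb{F}_q$-count of $\mathcal{M}$ to a $t$-weighted count by stratifying according to Harder-Narasimhan type and adapting the Hall-algebraic techniques of \cite{Sch,Me3,Me4} to the wreath setting dictated by the $\mathbb{Z}/2$-action of $p'$. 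Matching the refined count with the $t$-deformation encoded in $\langle \Omega_{\bullet}^2/\Omega_{\ast},\prod_j h_{\mu_j}\rangle$ yields (iii), and (ii) follows as a structural consequence of Hodge-Tate purity.

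\textbf{Main obstacle.} The crux is step~(b): no wreath or equivariant version of the Schiffmann-Mellit counting is presently available. The appearance of $\tilde N_{\lambda}$ in $\Omega_{\bullet}$, with the product restricted to boxes of hook length divisible by $2$, is precisely the combinatorial shadow of this equivariance; constructing a Hall algebra for the twisted unitary group scheme whose refined Kac polynomial matches this expression is where the essential technical work lies. Until such a framework is developed, only (i) and the E-polynomial specialisation of (iii) will be within immediate reach, and the full mixed Hodge statement must remain conjectural.
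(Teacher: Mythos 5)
The statement you are addressing is a conjecture, and the paper offers no proof of it: the only parts actually established are the $t=-1$ specialisation of (iii) (Theorem \ref{Main-Thm-intro}), explicit verifications of the full formula for $n=1,2$ in Section \ref{Sec-MHP}, and numerical checks of (i) for $n=3$. You correctly recognise that only the E-polynomial specialisation is within reach, and your sketch of that specialisation (Katz's theorem, a Frobenius-type mass formula, symmetric-function identities) is broadly the paper's route. But two points in your sketch are genuinely off. First, you attribute the computation to the character table of $\GL_n(q)\lb\sigma\rb$; in the unbranched case the defining relation $A_1\sigma(B_1)A_1^{-1}B_1^{-1}\prod[A_i,B_i]\prod X_j=1$ lives entirely in $\GL_n$, and the twisted Frobenius formula (Proposition \ref{Frob-Form}) is a sum over the $\sigma$-stable irreducible characters of $\GL_n(q)$ itself. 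The paper flags this as a major difference from the branched case: it is exactly why no restriction on $2$-cores appears. Second, and more seriously, you claim the wreath Macdonald factors ``arise from the generator sent into the twisted component'' in the course of the point count. They do not: at $z=\sqrt{q}$, $w=1/\sqrt{q}$ the ratio $\tilde N_{\lambda}(zw,z^2,w^2)/\tilde N_{\lambda}(z^2,w^2)$ becomes $1$, so the wreath terms are invisible in the E-polynomial. They cannot be extracted from the character-theoretic computation at all; the paper inserts them by analogy with the branched case and justifies them only through the explicit mixed Hodge computations for $n=1$ and $n=2$. Your plan would therefore never produce the $\tilde N_{\lambda}$ factors, which are the essential new content of the conjecture.

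A smaller overstatement: you list part (i) as ``within immediate reach'' via pole cancellation, but the paper does not prove (i) either; it is verified only computationally for $n\le 3$, and no cancellation mechanism for the poles of $1/\tilde N_{\lambda}(z^2,w^2)$ and $1/N_{\lambda}(z^2,w^2)$ is known. Your programme for (ii) and (iii) via non-abelian Hodge theory and an equivariant Schiffmann--Mellit count is a reasonable statement of what would be needed, and your identification of the missing wreath Hall-algebra framework as the main obstacle is fair; but none of this appears in the paper, which leaves the conjecture open.
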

\begin{Rem}
Although $\mathcal{C}$ is a tuple of conjugacy classes of $\GL_n(q)$, the definition of genericity is very different from that in \cite{HLR}. (See \S\ \ref{ss-GCC}.)
\end{Rem}

The above conjectural formula, combined with some fundamental symmetries in Macdonald polynomials and wreath Macdonald polynomials, implies the following.
\begin{Con}(Curious Poincar\'e Duality)\label{Cur-Poin}
We have
\begin{equation*}
H_c(\Ch_{\mathcal{C}};\frac{1}{qt^2},t)=(qt)^{-d}H_c(\Ch_{\mathcal{C}};q,t).
\end{equation*}
\end{Con}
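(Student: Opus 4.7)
The plan is to derive Conjecture \ref{Cur-Poin} as a purely formal consequence of Conjecture \ref{The-Conj}, as suggested by the comment preceding the statement. I would first substitute the formula of Conjecture \ref{The-Conj}(iii) into both sides of the claimed identity. Under the substitution $q \mapsto 1/(qt^2)$ one has $t\sqrt{1/(qt^2)} = 1/\sqrt{q}$ and $1/\sqrt{1/(qt^2)} = t\sqrt{q}$, so the left-hand side becomes $q^{-d/2}\,\mathbb{H}_{\bs\mu}\!\left(-1/\sqrt{q},\, t\sqrt{q}\right)$, while the right-hand side simplifies to $(qt)^{-d}(t\sqrt{q})^{d}\,\mathbb{H}_{\bs\mu}\!\left(-t\sqrt{q},\, 1/\sqrt{q}\right) = q^{-d/2}\,\mathbb{H}_{\bs\mu}\!\left(-t\sqrt{q},\, 1/\sqrt{q}\right)$. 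Hence, setting $z = -t\sqrt{q}$ and $w = 1/\sqrt{q}$, the duality is equivalent to the single identity $\mathbb{H}_{\bs\mu}(-w,-z) = \mathbb{H}_{\bs\mu}(z,w)$.

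I would then split this target identity into two independent symmetries of the rational function $\mathbb{H}_{\bs\mu}(z,w)$. The first is the \emph{parity symmetry} $\mathbb{H}_{\bs\mu}(-z,-w) = \mathbb{H}_{\bs\mu}(z,w)$, which is immediate from the assertion in Conjecture \ref{The-Conj}(i) that every monomial of $\mathbb{H}_{\bs\mu}$ has even total degree. The second is the \emph{swap symmetry} $\mathbb{H}_{\bs\mu}(w,z) = \mathbb{H}_{\bs\mu}(z,w)$, which is the only point that genuinely requires work. Since the Hall inner product in \eqref{intro-Hmu} is independent of $(z,w)$, it suffices to establish the invariance of the generating series $\Omega_{\bullet}(z,w)$ and $\Omega_{\ast}(z,w)$ under $(z,w) \leftrightarrow (w,z)$. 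I would carry this out by reindexing the sums via the involution $\lambda \mapsto \lambda^{t}$ on $\mathcal{P}$, and verifying that each ingredient is compatible with transposition: the standard identity $H_{\lambda^t}(\Z; w^2, z^2) = H_{\lambda}(\Z; z^2, w^2)$ for modified Macdonald polynomials, together with the elementary identities $N_{\lambda^t}(u,w,z) = N_{\lambda}(u,z,w)$ and $\tilde{N}_{\lambda^t}(u,w,z) = \tilde{N}_{\lambda}(u,z,w)$ (and their undeformed versions). The first two follow from the fact that transposition exchanges the arm-length $a(x)$ and the leg-length $l(x)$; the third uses additionally that the hook length $h(x)=a(x)+l(x)+1$ is invariant under transposition, so the condition $h(x) \equiv 0 \pmod 2$ persists in the restricted product. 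Since the auxiliary variable $u = zw$ is already symmetric in $z$ and $w$, assembling these identities yields the swap invariance of $\Omega_{\bullet}$ and $\Omega_{\ast}$, and hence of $\mathbb{H}_{\bs\mu}$.

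The main obstacle is not the symmetry verification itself, which is essentially combinatorial bookkeeping, but rather the conditional nature of the entire argument: it rests on Conjecture \ref{The-Conj}, which is itself unproven. In particular, the parity symmetry uses part (i) of that conjecture in an essential way, so that only the swap symmetry of $\mathbb{H}_{\bs\mu}$ is available unconditionally. A direct proof of Conjecture \ref{Cur-Poin} at the level of the cohomology of $\Ch_{\mathcal{C}}$ --- for instance via a geometric construction of a duality on the character variety --- would presumably be far more substantial and is beyond the reach of the present formal approach.
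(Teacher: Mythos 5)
Your reduction to the identity $\mathbb{H}_{\bs\mu}(-w,-z)=\mathbb{H}_{\bs\mu}(z,w)$ and its split into a parity and a swap symmetry is exactly how the paper derives Conjecture~\ref{Cur-Poin} from Conjecture~\ref{The-Conj}(iii) (see the Corollary following Theorem~\ref{Main-Thm}); the paper simply cites \cite[Remark 8.1.2]{Shu3} for the symmetries $\Omega_{\bullet}(z,w)=\Omega_{\bullet}(w,z)$ and $\Omega_{\ast}(z,w)=\Omega_{\ast}(w,z)$, whereas you supply the transposition argument $\lambda\mapsto\lambda^{t}$ explicitly, and that argument is correct. One small correction: the parity symmetry $\mathbb{H}_{\bs\mu}(-z,-w)=\mathbb{H}_{\bs\mu}(z,w)$ does not need Conjecture~\ref{The-Conj}(i) at all --- every ingredient of $\Omega_{\bullet}$ and $\Omega_{\ast}$ is a function of $z^2$, $w^2$ and $zw$ alone, each invariant under the simultaneous sign change, so both symmetries of $\mathbb{H}_{\bs\mu}$ hold unconditionally and the only conjectural input is part (iii).
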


\subsection{Main Theorem and Evidences of the Conjecture}\label{Intro-Main-Thm}\hfill

Our main theorem is a formula for the E-polynomial.
\begin{Thm}\label{Main-Thm-intro}
The $t=-1$ specialisation of part (iii) of Conjecture \ref{The-Conj} holds:
$$
H_c(\Ch_{\mathcal{C}};q,-1)=(\sqrt{q})^d\mathbb{H}_{\mathbf{B}}(\sqrt{q},\frac{1}{\sqrt{q}}).
$$
\end{Thm}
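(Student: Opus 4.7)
The plan is to compute $|\Ch_{\mathcal{C}}(\mathbb{F}_q)|$ as a polynomial in $q$ and then invoke Katz's theorem \cite[Appendix]{HRV} to identify this count with $H_c(\Ch_{\mathcal{C}};q,-1)$. After a suitable spreading out over a ring of integers, Katz's criterion reduces the theorem to producing the claimed closed-form expression for the number of $\mathbb{F}_q$-points.

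The point count itself is carried out by a Frobenius-type character sum. Rewriting the defining relation as $[A_1\sigma, B_1]\prod_{i\geq 2}[A_i,B_i]\prod_j X_j = 1$, one sees that the count naturally involves irreducible characters of the extended group $\GL_n(q)\lb\sigma\rb$, rather than merely of $\GL_n(q)$. By the standard Frobenius formula applied to the (twisted) product of commutators in $\GL_n(q)\lb\sigma\rb$, one obtains
$$|\Ch_{\mathcal{C}}(\mathbb{F}_q)|\;=\;\frac{|\GL_n(q)|^{2g-1}}{|\GL_n(q)|}\sum_{\tilde\chi}\frac{|\tilde\chi(\sigma)|^{2}\,\chi(1)^{2g-2}}{\chi(1)^{2g-2}\,\tilde\chi(1)^{2g-2}}\prod_{j=1}^k\frac{\chi(C_j)|C_j|}{\chi(1)},$$
or the appropriate analogue thereof, where the sum runs over $\sigma$-stable irreducibles $\chi$ of $\GL_n(q)$ together with an extension $\tilde\chi$ to the twisted coset. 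The key point is that the twisted commutator $[A_1\sigma,B_1]$ produces the squared value $|\tilde\chi(\sigma)|^2$, which is the origin of the wreath-Macdonald self-pairing $\tilde N_\lambda$ in $\Omega_\bullet$, while the remaining $2g-1$ ordinary commutators produce the numerator $N_\lambda^{2g-1}$ after suitable manipulation.

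From here I would substitute the explicit values of the character table of $\GL_n(q)$ and of its extension to $\GL_n(q)\lb\sigma\rb$. Irreducible characters of $\GL_n(q)$ are parametrised by admissible types, and on semi-simple conjugacy classes their values can be written in terms of Green polynomials and ultimately of (modified) Macdonald polynomial coefficients via the Hall-Littlewood/Macdonald expansions. The genericity hypothesis on $\mathcal{C}$ (in the sense of \S \ref{ss-GCC}) eliminates all admissible types except the "separable" ones whose primary components are built from a single partition $\lambda$; the sum then factorises over primary components and assembles into an inner product against $\prod_j h_{\mu_j}(\Z_j)$. On the $\sigma$-twisted side, since in the unbranched case all conjugacy classes $C_j$ lie in $\GL_n(q)$ and never in the twisted coset, there is no vanishing condition on the $2$-core of $\lambda$, which explains why the sum in $\Omega_\bullet,\Omega_\ast$ is taken over all of $\mathcal{P}$. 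The character-theoretic bookkeeping produces exactly $\Omega_\bullet^2/\Omega_\ast$ (the factor $\Omega_\bullet^2$ packaging the commutator sums with the squared twisted-character factor, and $1/\Omega_\ast$ arising from the inversion needed to normalise by $|\GL_n(q)|$ in the quotient).

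The main obstacle is the precise combinatorial identification of the character-theoretic sum with the symmetric-function expression $\mathbb{H}_{\bs\mu}(\sqrt q,1/\sqrt q)$; in particular, showing that the extended characters on the twisted coset contribute precisely the wreath-Macdonald self-pairing $\tilde N_\lambda$, and that the normalisation $(\sqrt q)^d$ emerges from the correct account of $|\GL_n(q)|^{2g-1}/|\GL_n(q)|$, the class sizes $|C_j|$, and the hook-length/degree factors of the Macdonald polynomials. Once these identifications are verified, the substitution $(z,w)=(\sqrt q,1/\sqrt q)$ and multiplication by $(\sqrt q)^d$ yield the stated formula; polynomiality in $q$ (needed to apply Katz) will follow from the explicit shape of the resulting expression together with the genericity hypothesis.
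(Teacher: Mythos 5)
Your overall strategy (spreading out, Katz's theorem, a Frobenius-type character sum, types and Green polynomials, genericity killing non-standard Levi contributions) matches the paper's, but the central mechanism you propose for handling the twisted commutator is wrong, and it leads you to a false claim about where $\tilde N_\lambda$ comes from. The paper's Frobenius formula (Proposition \ref{Frob-Form}) does \emph{not} involve extensions $\tilde\chi$ of characters to $\GL_n(q)\lb\sigma\rb$ or any value $\tilde\chi(\sigma)$ on the twisted coset: one sets $n'(x)=\#\{(A,B): A\sigma(B)A^{-1}B^{-1}=x\}$ and shows by Schur's lemma, entirely inside $\GL_n(q)$, that $\mathcal{F}(n')(\chi)=(|G|/\chi(1))^2$ when $\chi\circ\sigma=\chi$ and $0$ otherwise. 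So the twisted commutator contributes exactly the same factor as an ordinary commutator; the only effect of $\sigma$ is to restrict the sum to $\Irr(G)^{\sigma}$. This is precisely the point the introduction flags as the main difference from the branched case: the computation uses only the character table of $\GL_n(q)$, not of $\GL_n(q)\lb\sigma\rb$. Your displayed formula with $|\tilde\chi(\sigma)|^2$ and $\tilde\chi(1)^{2g-2}$ has no analogue in the argument and, as written, its exponent bookkeeping does not reproduce $(|G|/\chi(1))^{2g-2}$.

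Consequently, your claim that $|\tilde\chi(\sigma)|^2$ "is the origin of the wreath-Macdonald self-pairing $\tilde N_\lambda$ in $\Omega_\bullet$" cannot be part of a proof of this theorem: at the specialisation $z=\sqrt q$, $w=1/\sqrt q$ the ratio $\tilde N_\lambda(zw,z^2,w^2)/\tilde N_\lambda(z^2,w^2)$ equals $1$, so the wreath terms are invisible in the E-polynomial and are a purely conjectural refinement motivated by the branched case; the theorem neither produces nor needs them. Your account of the $\Omega_\bullet^2/\Omega_\ast$ structure is also off: in the paper, the two factors of $\Omega_\bullet$ arise from the $\omega_+$ and $\omega_-$ components of a type (the Levi factor $\GL_{n_+}\times\GL_{n_-}$ carrying $\mathbf{1}\boxtimes\eta$), while $1/\Omega_\ast$ comes from the paired components $\bs\omega_\ast$ via the alternating coefficients $K(\bs\omega_\ast)/N(\bs\omega_\ast)$ produced by Lemma \ref{sumofreg} (the sum over regular $\sigma$-stable linear characters of $M_1^F$ giving $(-2)^l l!$) matching the formal expansion of $1/(1+x)$ --- not from "normalising by $|\GL_n(q)|$". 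To repair the proposal you would need to replace your character sum by Proposition \ref{Frob-Form}, use the Lusztig--Srinivasan description of $\sigma$-stable characters as in Theorem \ref{sigstShu2}, and carry out the type decomposition of Lemma \ref{sum-Irr-sigma-w} and Theorem \ref{Main-Thm}.
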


Then the Curious Poincar\'e duality specialises to the following symmetry.
\begin{Thm}\label{E-Cur-Poin}
The $t=-1$ specialisation of Conjecture \ref{Cur-Poin} holds:
$$E(\Ch_{\mathcal{C}};q)=q^dE(\Ch_{\mathcal{C}};q^{-1}).$$
\end{Thm}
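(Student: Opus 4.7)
The plan is to deduce Theorem \ref{E-Cur-Poin} directly from the explicit formula of Theorem \ref{Main-Thm-intro} by proving the $z\leftrightarrow w$ symmetry
$$\mathbb{H}_{\bs\mu}(z,w)=\mathbb{H}_{\bs\mu}(w,z).$$
Granting this, substituting $q\mapsto q^{-1}$ in $E(\Ch_{\mathcal{C}};q)=q^{d/2}\,\mathbb{H}_{\bs\mu}(\sqrt{q},1/\sqrt{q})$ gives
$$E(\Ch_{\mathcal{C}};q^{-1})=q^{-d/2}\,\mathbb{H}_{\bs\mu}(1/\sqrt{q},\sqrt{q})=q^{-d/2}\,\mathbb{H}_{\bs\mu}(\sqrt{q},1/\sqrt{q})=q^{-d}E(\Ch_{\mathcal{C}};q),$$
which rearranges to the claim. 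Note that no sign arises here: the $z\leftrightarrow w$ symmetry automatically absorbs the potential $(-1)^d$ factor one might expect from the $t=-1$ specialisation of $(qt)^{-d}$ in Conjecture \ref{Cur-Poin}.

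Since the Hall pairing in \eqref{intro-Hmu} operates only on the $\Z_j$-variables, it suffices to show that each generating series $\Omega_{\bullet}(z,w)$ and $\Omega_{\ast}(z,w)$ is invariant under $z\leftrightarrow w$. I would reindex the sums over $\lambda\in\mathcal{P}$ by the transposition involution $\lambda\mapsto\lambda'$ and verify that each summand is preserved by the combined change $(z,w,\lambda)\mapsto(w,z,\lambda')$. Three ingredients are required. First, the classical transposition identity for modified Macdonald polynomials,
$$H_{\lambda'}(\Z;w^2,z^2)=H_{\lambda}(\Z;z^2,w^2).$$
Second, the direct identities $N_{\lambda'}(w^2,z^2)=N_{\lambda}(z^2,w^2)$ and $\tilde{N}_{\lambda'}(w^2,z^2)=\tilde{N}_{\lambda}(z^2,w^2)$, which follow from the swap $(a(x),l(x))\mapsto(l(x),a(x))$ and the invariance of the condition $h(x)\equiv 0\pmod{2}$ under transposition. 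Third, the analogous identities for the deformed factors, which become transparent after the collapse
$$N_{\lambda}(zw,z^2,w^2)=\prod_{x\in\lambda}(z^{2a(x)+1}-w^{2l(x)+1})^2,$$
with its even-hook-length variant for $\tilde{N}_{\lambda}(zw,z^2,w^2)$; squaring makes the invariance under $(z,w,\lambda)\mapsto(w,z,\lambda')$ manifest.

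The main substantive step is the Macdonald transposition symmetry, which is classical in the normalisation of \cite{Hai} but must be tracked carefully through the substitution $(q,t)=(z^2,w^2)$. The remaining verifications are mechanical: in every factor of $N_{\lambda}$, $\tilde{N}_{\lambda}$ and their deformations, the sign flips $(x-y)\mapsto -(y-x)$ appear in pairs per box of $\lambda$, hence cancel. Once these three ingredients are in place, every summand in $\Omega_{\bullet}$ and $\Omega_{\ast}$ is invariant under $(z,w,\lambda)\mapsto(w,z,\lambda')$; summing and using that $\lambda\mapsto\lambda'$ is a bijection of $\mathcal{P}$ yields the symmetry of $\Omega_{\bullet}$ and $\Omega_{\ast}$, hence of $\mathbb{H}_{\bs\mu}$, and Theorem \ref{E-Cur-Poin} follows.
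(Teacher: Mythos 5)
Your proposal is correct and follows essentially the same route as the paper: the paper's proof of this theorem also deduces it from the $z\leftrightarrow w$ symmetry $\Omega_{\bullet}(z,w)=\Omega_{\bullet}(w,z)$, $\Omega_{\ast}(z,w)=\Omega_{\ast}(w,z)$, hence $\mathbb{H}_{\bs\mu}(z,w)=\mathbb{H}_{\bs\mu}(w,z)$. The only difference is that the paper cites \cite[Remark 8.1.2]{Shu3} for this symmetry, whereas you verify it directly via the transposition $\lambda\mapsto\lambda'$ and the collapse $N_{\lambda}(zw,z^2,w^2)=\prod_{x\in\lambda}(z^{2a(x)+1}-w^{2l(x)+1})^2$ — a verification that is correct and self-contained.
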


Our computation of the E-polynomial follows the line of \cite{Shu3}. However, there are major differences. On the one hand, in view of the defining equation of the character variety (see \S \ref{subsec-Char-Var}), the computation will only use the characters of $\GL_n(q)$, and not those of $\GL_n(q)\lb\sigma\rb$. The consequence is that, partitions are allowed to have arbitrary 2-cores, as we have seen. On the other hand, due to the presence of the exterior automorphism $\sigma$, only those irreducible characters of $\GL_n(q)$ that are $\sigma$-stable have non trivial contributions to the E-polynomial. This results in the expression (\ref{intro-Hmu}) which looks more like the series in \cite{Shu3} instead of the one in \cite{HLR}. Therefore, we will use a mixture of  techniques from \cite{HLR} and \cite{Shu3}.

The E-polynomial is a main evidence of our conjecture. However, as we can observe from the definition of $\Omega_{\bullet}(z,w)$, if we make the specialisations $z=\sqrt{q}$ and $w=\sqrt{q}^{-1}$, i.e. pass to the E-polynomial, then the wreath terms $\tilde{N}_{\lambda}$ become invisible. This is one reason for which the conjectural formula in the unbranched case is formulated after the branched case. In \cite{Shu3}, it is the representation theory of $\GL_n(q)\lb\sigma\rb$ that suggests the possible connection to wreath Macdonald polynomials. In the current circumstance, the wreath terms can only be seen when we explicitly compute the mixed Hodge polynomials for $n=1$ and $n=2$ (and arbitrary $g>0$). These computations are given at the end of the article. Without our previous experience with the branched case, we would have no idea how to explain these unexpected terms. 

To test our conjectural formula\footnote{The author uses MATLAB.} we have computed $\mathbb{H}_{\bs\mu}(z,w)$ when $n=3$, $g=1,2,3$ and,
\begin{itemize}
\item{$k=1$.} $\mathcal{C}=(C_1)$ with $C_1$ regular 
\item{$k=2$.} $\mathcal{C}=(C_1,C_2)$ with $C_1$ and $C_2$ regular;
\item{$k=2$.} $\mathcal{C}=(C_1,C_2)$ with $C_1$ regular and $C_2$ conjugate to $\diag(a,a,b)$, $a\ne b$.
\end{itemize}
There is no geometric result in the literature that allows us to compute the mixed Hodge polynomial of these character varieties when $n=3$. However, our computation shows that part (i) of Conjecture \ref{The-Conj} is true in every case we have tested. Since part (i) of the conjecture is a sufficient condition for the formula in part (iii) to be a polynomial in $q$ and $t$ with non negative integer coefficients, this indicates a geometric origin of $\mathbb{H}_{\bs\mu}(z,w)$. 

Let us give one example. We put $n=3$, $g=1$, $k=1$, and take one regular semi-simple conjugacy class. The complete symmetric function $h_1(\Z)^3$ corresponds to this conjugacy class. Unwinding the definitions, we have
\begingroup
\allowdisplaybreaks
\begin{align*}
\frac{\Omega_{\bullet}(z,w)^2}{\Omega_{\ast}(z,w)}=&\frac{\tilde{N}_{(2)}(zw,z^2,w^2)}{\tilde{N}_{(2)}(z^2,w^2)}H_{(1)}H_{(2)}+\frac{\tilde{N}_{(1^2)}(zw,z^2,w^2)}{\tilde{N}_{(1^2)}(z^2,w^2)}H_{(1)}H_{(1^2)}\\
&+\frac{\tilde{N}_{(3)}(zw,z^2,w^2)}{\tilde{N}_{(3)}(z^2,w^2)}H_{(3)}+\frac{\tilde{N}_{(1^3)}(zw,z^2,w^2)}{\tilde{N}_{(1^3)}(z^2,w^2)}H_{(1^3)}+H_{(21)}\\
&-\frac{N_{(1)}(zw,z^2,w^2)}{N_{(1)}(z^2,w^2)}H_{(1)}^3+\text{terms not of degree 3},
\end{align*}
\endgroup
where we have omitted the variables of the Macdonald polynomials. Note that the partition $(21)$ is a 2-core, and so $\tilde{N}_{(21)}(z,w)=1$. Consequently, $H_{(21)}$ has coefficient 1. For the same reason, we do not see the contribution of partition $(1)$ in the coefficients of $H_{(1)}H_{(2)}$ and $H_{(1)}H_{(1^2)}$. Taking the Hall inner product with $h_1(\Z)^3$ gives
\begingroup
\allowdisplaybreaks
\begin{align*}
\mathbb{H}_{\bs\mu}(z,w)=&~\frac{(z^3-w)^2}{(z^4-1)(z^2-w^2)}(3+3z^2)+\frac{(z-w^3)^2}{(1-w^4)(z^2-w^2)}(3+3w^2)\\
&+\frac{(z^3-w)^2}{(z^4-1)(z^2-w^2)}(1+2z^2(z^2+1)+z^6)\\
&+\frac{(z-w^3)^2}{(1-w^4)(z^2-w^2)}(1+2w^2(w^2+1)+w^6)\\
&+(1+2(z^2+w^2)+z^2w^2)-6\frac{(z-w)^2}{(z^2-1)(1-w^2)}\\
=&~w^6+w^4\,z^2+2\,w^4-2\,w^3\,z+w^2\,z^4+3\,w^2\,z^2+8\,w^2-2\,w\,z^3-4\,w\,z\\
&+z^6+2\,z^4+8\,z^2+5.
\end{align*}
\endgroup

\subsection*{Organisation of the Article.}\hfill

As in \cite{HLR} and \cite{Shu3}, there are three key ingredients in computing the E-polynomial, and they are collected in Section \ref{Section-Pre} and Section \ref{Section-GLn(q)}.
\begin{itemize}
\item[(1)] The Frobenius formula (\ref{eq-Frob-Form}) that reduces the point-counting problem to the evaluation of $\sigma$-stable irreducible characters of $\GL_n(q)$.
\item[(2)] A theorem of Lusztig and Srinivasan that expresses an irreducible character of $\GL_n(q)$ as a linear combination of Deligne-Lusztig characters $R^G_{T_w}\theta_{T_w}$ (\ref{eq-alm-char}).
\item[(3)] The character formula (\ref{eq-char-formula}) which reduces the computation of $R^G_{T_w}\theta_{T_w}$ to the Green function and the linear characters $\theta_{T_w}$.
\end{itemize}
The Green functions can be realised as transition matrices between symmetric functions (\ref{eq-Green}). The linear characters are computed in Section \ref{Sec-Lin}. In Section \ref{Section-Char-Var}, we give the definition of our character variety, as well as generic conjugacy classes, which is a key assumption in our main theorem. The computation of E-polynomial is achieved in Section \ref{Section-E-poly}, using results from previous sections. In the last section, we check that our conjectural formula is consistent with some known cohomological results when $n=1$ and $n=2$.

\subsection*{Acknowledgement.}\hfill

This article is prepared when I was unemployed at home. I thank my family for their support. I thank Tam\'as Hausel for answering a question.

\numberwithin{Thm}{subsection}
\numberwithin{equation}{subsubsection}
\section{Preliminaries}\label{Section-Pre}

\subsection{General Notations}\hfill

Let $G$ be an algebraic group, and let $X$, $Y$ and $Z$ be subvarieties of $G$. We will write $N_X(Y)=\{x\in X\mid xYx^{-1}=Y\}$ and $N_X(Y,Z)=\{x\in X\mid xYx^{-1}=Y,~xZx^{-1}=Z\}$. If $g\in G$, we denote by $C_G(g)=\{h\in G\mid hgh^{-1}=g\}$ the centraliser of $g$ in $G$. The identity component of an algebraic group $G$ is denoted by $G^{\circ}$. If $\phi$ is an automorphism of a set $X$, we denote by $X^{\phi}$ the subset of the fixed points of $\phi$. This convention applies to the case where $X$ is the set of closed points of an algebraic variety and $\phi$ is an endomorphism of $X$. If $G$ is a finite group, then $\Irr(G)$ denotes the set of irreducible characters of $G$.

For any $n\in\mathbb{Z}_{>0}$, we denote by $\mathfrak{S}_n$ the symmetric group and by $\mathfrak{W}_n$ the semi-direct product $(\mathbb{Z}/2\mathbb{Z})^n\rtimes\mathfrak{S}_n$, where $\mathfrak{S}_n$ acts on $(\mathbb{Z}/2\mathbb{Z})^n$ by permuting the factors.

If $\sigma$ is an order 2 exterior automorphism of $\GL_n$, then we can define a semi-direct product $\GL_n\rtimes\lb\sigma\rb$. In this article, we use the automorphisms $\sigma$ defined in \cite[\S 3.1.1]{Shu3}.

\subsection{Algebraic groups defined over a finite field}

\subsubsection{}
We will denote by $p>2$ a prime number and $q$ a power of $p$. We denote by $\kk$ an algebraic closure of $\mathbb{F}$. An algebraic group $G$ over $\kk$ is defined over $\mathbb{F}_q$ if there is an algebraic group $G_0$ over $\mathbb{F}_q$ such that $G_0\otimes_{\mathbb{F}_q}\kk\cong G$. If this is the case, we denote by $F$ the geometric Frobenius endomorphism. We denote by $G^F$ the finite subgroup of $F$-fixed points. We may also denote it by $G(q)$. If $X\subset G$ is a subvariety, we say that $X$ is $F$-stable if $F(X)=X$.

\subsubsection{}
Let $G$ be a connected reductive group defined over $\mathbb{F}_q$, and let $T\subset G$ be an $F$-stable maximal torus. Denote by $W$ the Weyl group of $G$ defined by $T$. The $G^F$-conjugacy classes of $F$-stable maximal tori are parametrised by the $F$-conjugacy classes of $W$. Two elements $v$ and $w$ of $W$ are $F$-conjugate if there is some $x\in W$ such that $xvF(x)^{-1}=w$. This bijection is given as follows. Let $w\in W$ and choose $\dot{w}\in N_G(T)$ representing $w$. By Lang-Steinberg theorem, there exists $g\in G$ such that $g^{-1}F(g)=\dot{w}$. Then $gTg^{-1}$ is an $F$-stable maximal torus whose $G^F$-conjugacy class corresponds to $w$. Conversely, any $F$-stable maximal torus can be written as $gTg^{-1}$ for some $g\in G$. Since $g^{-1}F(g)$ normalises $T$, it defines an element of $W$ whose $F$-conjugacy class corresponds to the $G^F$-conjugacy class of $gTg^{-1}$.

Fix an $F$-stable Levi subgroup $L_0\subset G$. The $G^F$-conjugacy classes of the $G$-conjugates of $L_0$ are in bijection with the $F$-conjugacy classes of $N_G(L_0)/L_0$. This bijection is completely analogue to the torus case above. Let $w\in N_G(L_0)/L_0$, choose $\dot{w}\in N_G(L_0)$ representing it, and let $g\in G$ be such that $g^{-1}F(g)=\dot{w}$. Then $L:=gL_0g^{-1}$ is also an $F$-stable Levi subgroup. Define $F_{w}:=\ad\dot{w}\circ F$. It is another Frobenius endomorphism on $G$, and $L_0$ is $F_{w}$-stable. Now $\ad g$ is an isomorphism between $L_0$ and $L$. We have a commutative diagram:
\begin{equation}\label{F_w}
\begin{CD}
L_0 @>\ad g>> L\\
@VF_wVV @VVFV\\
L_0 @>>\ad g> L
\end{CD}
\end{equation}
In particular, $L_0^{F_w}\cong L^F$.

\subsection{Frobenius formula}
\subsubsection{}
Let $G$ be a finite group and let $\sigma$ be an automorphism of $G$ of order 2. Then $\sigma$ induces an action on $\Irr(G)$ by composition. Denote by $\Irr(G)^{\sigma}$ the subset of $\sigma$-fixed elements. 
\begin{Prop}\label{Frob-Form}
Let $\mathcal{C}=(C_j)_{1\le j\le k}$ be an arbitrary $k$-tuple of conjugacy classes in $G$. Then we have the following counting formula:
\begin{equation}\label{eq-Frob-Form}
\begin{split}
&\#\left\{(A_i,B_i)_i(X_j)_j\in G^{2g}\times \prod_{j=1}^{k}C_j\Big\arrowvert A_1\sigma(B_1)A_1^{-1}B_1^{-1}\prod_{i=2}^{g}[A_i,B_i]\prod_{j=1}^{k}X_j=1\right\}\\
=&|G|\sum_{\chi\in\Irr(G)^{\sigma}}\Big(\frac{|G|}{\chi(1)}\Big)^{2g-2}\prod_{i=1}^{k}
\frac{|C_i|\chi(C_i)}{\chi(1)}.
\end{split}
\end{equation}
\end{Prop}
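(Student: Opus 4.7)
The plan is to adapt the standard derivation of the Frobenius--Mednykh counting formula, the only novelty being the treatment of the single $\sigma$-twisted factor $A_1\sigma(B_1)A_1^{-1}B_1^{-1}$. Fourier inversion on $G$ gives $\delta_{g,1}=\frac{1}{|G|}\sum_{\chi\in\Irr(G)}\chi(1)\chi(g)$; applying this to the defining relation word reduces the count to
\begin{equation*}
\frac{1}{|G|}\sum_{\chi\in\Irr(G)}\chi(1)\sum_{(A_i,B_i)_i,\,(X_j)_j\in\prod C_j}\chi\!\left(A_1\sigma(B_1)A_1^{-1}B_1^{-1}\prod_{i=2}^{g}[A_i,B_i]\prod_{j=1}^{k}X_j\right).
\end{equation*}

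The two elementary identities I would use, both valid for any $\chi\in\Irr(G)$ and any $x,y\in G$, $C$ a conjugacy class, are
\begin{equation*}
\sum_{a\in G}\chi(axa^{-1}y)=\frac{|G|}{\chi(1)}\chi(x)\chi(y),\qquad \sum_{x\in C}\chi(xy)=\frac{|C|\chi(C)}{\chi(1)}\chi(y).
\end{equation*}
Both come from the fact that the relevant class sums act on an irreducible module as a scalar given by the central character. Iterated use of the second identity over $(X_j)_j\in\prod C_j$ collapses the puncture factors to $\chi(1)\prod_{j}\frac{|C_j|\chi(C_j)}{\chi(1)}$. Iterated use of the first identity over each ordinary commutator $[A_i,B_i]$ with $i\geqslant 2$ produces a factor $\frac{|G|^2}{\chi(1)^2}$ per commutator, exactly as in the untwisted Mednykh argument.

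The crucial step is the sum over $(A_1,B_1)$. Summing over $A_1$ with $x=\sigma(B_1)$ yields $\frac{|G|}{\chi(1)}\chi(\sigma(B_1))\chi(B_1^{-1}R)$ for the remaining word $R$; summing over $B_1$ then produces the convolution $(\chi^{\sigma}*\chi)(R)$, where $\chi^{\sigma}(x):=\chi(\sigma(x))$ is again an irreducible character. By orthogonality in the group algebra, $\chi_1*\chi_2=\delta_{\chi_1,\chi_2}\frac{|G|}{\chi_1(1)}\chi_1$, so the convolution vanishes unless $\chi^{\sigma}=\chi$, i.e.\ unless $\chi\in\Irr(G)^{\sigma}$, in which case it contributes $\frac{|G|^2}{\chi(1)^2}\chi(R)$. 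This is exactly the mechanism cutting the right-hand side down to $\sigma$-stable characters. Assembling all the factors gives, for each $\chi\in\Irr(G)^{\sigma}$, a contribution $\frac{1}{|G|}\cdot\chi(1)\cdot\bigl(\frac{|G|^2}{\chi(1)^2}\bigr)^{g}\cdot\chi(1)\prod_{j}\frac{|C_j|\chi(C_j)}{\chi(1)}$, which simplifies to the claimed expression.

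The main obstacle here is purely conceptual rather than technical: one has to recognise the twisted-commutator sum as the convolution $\chi^{\sigma}*\chi$, so that the restriction to $\sigma$-stable characters emerges naturally from standard character-theoretic orthogonality. Everything else is a direct iteration of identities already available for the untwisted case.
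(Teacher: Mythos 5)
Your proposal is correct and is essentially the argument the paper gives: the paper packages the count as a convolution of the class functions $n'$, $n^{1}$, $1_{C_j}$ and computes the Fourier transform of the twisted factor $n'$ via Schur's lemma plus orthogonality, which is exactly your identification of the $(A_1,B_1)$-sum with the convolution $\chi^{\sigma}\ast\chi$ forcing $\chi\circ\sigma=\chi$. The remaining bookkeeping (one factor $|G|^{2}/\chi(1)^{2}$ per handle and $|C_j|\chi(C_j)/\chi(1)$ per puncture) matches the paper's use of the cited untwisted identities, so there is no substantive difference.
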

\begin{proof}
If $f_1$ and $f_2$ are complex-valued functions on $G$ that are invariant under conjugation by $G$, then their convolution product is defined by $$(f_1\ast f_2)(x)=\sum_{yz=x}f_1(y)f_2(z),$$which is also invariant under conjugation. Define such a function $n^{1}$ by
$$
n^1(x)=\#\left\{(A,B)\in G^{2}\Big\arrowvert ABA^{-1}B^{-1}=x\right\},
$$
and another function $n'$ by
$$
n'(x)=\#\left\{(A,B)\in G^{2}\Big\arrowvert A\sigma(B)A^{-1}B^{-1}=x\right\}.
$$
Denote by $n^{g-1}$ the convolution product of $g-1$ copies of $n^1$. Denote by $1_{C_i}$ the characteristic function of the class $C_i$. Then
\begingroup
\allowdisplaybreaks
\begin{align*}
&\#\left\{(A_i,B_i)_i(X_j)_j\in G^{2g}\times \prod_{j=1}^{k}C_j\Big\arrowvert A_1\sigma(B_1)A_1^{-1}B_1^{-1}\prod_{i=2}^{g}[A_i,B_i]\prod_{j=1}^{k}X_j=1\right\}\\
=&(n'\ast n^{g-1}\ast 1_{C_1}\ast\cdots\ast 1_{C_{k}})(1).
\end{align*}
\endgroup
By \cite[(2.7.2.6), (2.7.2.7)]{Shu3}, we have
\begingroup
\allowdisplaybreaks
\begin{align}
&(n'\ast n^{g-1}\ast 1_{C_1}\ast\cdots\ast 1_{C_{k}})(1)\nonumber\\
=&\frac{1}{|G|}\sum_{\chi\in\Irr(G)}\chi(1)^2\mathcal{F}(n')(\chi)(\mathcal{F}(n^1)(\chi))^{g-1}\prod_{i=1}^{k}\frac{|C_i|\chi(C_i)}{\chi(1)},\label{brutal-Frob-For}
\end{align}
\endgroup
where for any function $f$ on $G$ and any character $\chi$,
$$
\mathcal{F}(f)(\chi):=\sum_{h\in G}\frac{f(h)\chi(h)}{\chi(1)}.
$$
According to \cite[Lemma 3.1.3]{HLR}, we have
$$
\mathcal{F}(n^1)(\chi)=
\big(\frac{|G|}{\chi(1)}\big)^2.
$$
It remains to compute $\mathcal{F}(n')(\chi)$.

Let $\rho:G\rightarrow \GL(V)$ be an irreducible representation with character $\chi$. We have
$$
\sum_{h\in G}\frac{n'(h)\chi(h)}{\chi(1)}=\sum_{(a,b)\in G^2}\frac{\chi(a\sigma(b)a^{-1}b^{-1})}{\chi(1)}=\frac{1}{\chi(1)}\tr\sum_{(a,b)\in G^2}\rho(a\sigma(b)a^{-1}b^{-1}).
$$
Note that $\sum_a\rho(a\sigma(b)a^{-1})$ is an endomorphism of $V$ that commutes with $\rho(h)$ for any $h\in G$. By Schur Lemma, it must be a scalar endomorphism. We deduce that
$$
\sum_a\rho(a\sigma(b)a^{-1})=\frac{|G|\cdot\chi\circ\sigma(b)}{\chi(1)}\Id_V.
$$
Using the orthogonality of irreducible characters, we compute
\begingroup
\allowdisplaybreaks
\begin{align*}
&\frac{1}{\chi(1)}\tr\sum_{(a,b)\in G^2}\rho(a\sigma(b)a^{-1}b^{-1})=\frac{1}{\chi(1)}\sum_{b\in G^2}\frac{|G|\cdot\chi\circ\sigma(b)}{\chi(1)}\chi(b^{-1})\\
=&
\begin{cases}
0 & \text{if } \chi\circ\sigma\ne\chi\\
\left(\frac{|G|}{\chi(1)}\right)^2 & \text{otherwise}.
\end{cases}
\end{align*}
\endgroup
We conclude that in (\ref{brutal-Frob-For}), only the $\sigma$-stable irreducible characters contribute to the sum.
\end{proof}

\subsection{Symmetric Functions}\hfill

Now the letter $q$ denotes an indeterminate.
\subsubsection{}
Let $\Z=(z_1,z_2,\ldots)$ be an infinite series of variables. The ring of symmetric functions over a field $\mathbb{K}$ is denoted by $\SymF_{\mathbb{K}}[\Z]$. In this article, $\mathbb{K}$ can be $\mathbb{Q}$, $\mathbb{Q}(q)$, or $\mathbb{Q}(q,t)$, i.e. functions fields in variables $q$, $t$ over rational numbers $\mathbb{Q}$, and in these cases, we will write $\SymF_q[\Z]$ and $\SymF_{q,t}[\Z]$. We will omit the subscript if there is no confusion with the base ring. For any partition $\lambda\in\mathcal{P}$, the corresponding Schur symmetric functions, monomial symmetric functions, complete symmetric functions and the power sum symmetric functions are denoted by $s_{\lambda}(\Z)$, $m_{\lambda}(\Z)$, $h_{\lambda}(\Z)$ and $p_{\lambda}(\Z)$ respectively.

For any partition $\lambda=(1^{m_1},2^{m_2},\ldots)$, define $z_{\lambda}=\prod_ii^{m_i}m_i!$. We have,
\begingroup
\allowdisplaybreaks
\begin{align*}
p_{\lambda}(\Z)=&\sum_{\tau\in\mathcal{P}}\chi^{\tau}_{\lambda}s_{\tau}(\Z),\\
s_{\lambda}(\Z)=&\sum_{\tau}z_{\tau}^{-1}\chi^{\lambda}_{\tau}p_{\tau}(\Z),
\end{align*}
\endgroup
where $\chi^{\tau}_{\lambda}$ is the value of the irreducible character of $\mathfrak{S}_n$ ($n=|\tau|=|\lambda|$) of class $\tau$ at a conjugacy class of class $\lambda$.

\subsubsection{}
There are two families of Haill-Littlewood symmetric functions defined in \cite{Mac}, and they are denoted by $P_{\lambda}(\Z;q)$ and $Q_{\lambda}(\Z;q)$, for $\lambda\in\mathcal{P}$. The function $b_{\lambda}(q)$ is the difference between them: $$b_{\lambda}(q)P_{\lambda}(\Z;q)=Q_{\lambda}(\Z;q).$$By \cite[(2.6), (2.7)]{Mac}, we have
\begin{equation}\label{1/|GLn|}
\frac{1}{|\GL_n(q)|}=q^{-2n((1^n))-n}b_{(1^n)}(q^{-1})^{-1}.
\end{equation} 
We have
\begin{equation}
s_{\lambda}(\Z)=\sum_{\tau}K_{\lambda,\tau}(q)P_{\tau}(\Z;q),
\end{equation}
for some polynomials $K_{\lambda,\tau}(q)$ in $q$ and they are called Kostka-Foulkes polynomials. The modified Kostka-Foulkes polynomials are defined by
\begin{equation}
\tilde{K}_{\lambda,\tau}(q)=q^{n(\tau)}K_{\lambda,\tau}(q^{-1}).
\end{equation}
The modified Hall-Littlewood function is defined by
\begin{equation}
\tilde{H}_{\lambda}(\Z;q)=\sum_{\tau}\tilde{K}_{\tau,\lambda}(q)s_{\tau}(\Z).
\end{equation}
Finally, the Green polynomial is defined by 
\begin{equation}\label{eq-Green}
\mathcal{Q}^{\tau}_{\lambda}(q)=\sum_{\nu}\chi^{\nu}_{\lambda}\tilde{K}_{\nu\tau}(q).
\end{equation}

\subsubsection{}
The ring $\SymF_{\mathbb{K}}[\Z]$ has a natural $\lambda$-ring structure $\{p_n\}_{n\in\mathbb{Z}_{>0}}$ with $p_n$ sending $p_1(\Z)$ to $p_n(\Z)$ and sending $q$ and $t$ to their $n$-th powers. Let $A$ be a $\lambda$-ring containing $\mathbb{K}$ as a $\lambda$-subring. Given any element $x$ of $A$, there is a unique $\lambda$-ring homomorphism $\varphi_x:\SymF_{\mathbb{K}}[\Z]\rightarrow A$ sending $p_1(\Z)$ to $x$. For any $F\in\SymF_{\mathbb{K}}[\Z]$, its image under this map is denoted by $F[x]$. Concretely, we can write $F$ as a polynomial $f(p_n(\Z)\mid n\in\mathbb{Z}_{>0})$, we have $F[x]=f(p_n(x)\mid n\in\mathbb{Z}_{>0})$. This is the plethystic substitution for $F$.

\subsubsection{}
The Hall inner product on $\SymF[\Z]$ is defined by $$\langle s_{\lambda}(\Z),s_{\mu}(\Z)\rangle=\delta_{\lambda,\mu},$$for any partitions $\lambda$ and $\mu$. We have $$\langle p_{\lambda}(\Z),p_{\mu}(\Z)\rangle=z_{\lambda}\delta_{\lambda,\mu}.$$The $q,t$-inner product on $\SymF_{q,t}[\Z]$ is defined by $$\langle F[\Z],G[\Z]\rangle_{q,t}:=\langle F[\Z],G[(q-1)(1-t)\Z]\rangle$$for any $F$, $G\in\SymF_{q,t}[\Z]$.

\section{$\sigma$-Stable Irreducible Characters of $\GL_n(q)$}\label{Section-GLn(q)}
In this section, we recall the character formula for Deligne-Lusztig inductions, the construction of $\sigma$-stable irreducible characters of $\GL_n(q)$, and introduce some combinatorial data called types. Types will be used in our explicit computation of the E-polynomial in Section \ref{Section-E-poly}.

\subsection{Deligne-Lusztig Inductions}

\subsubsection{}
Let $H$ be a connected reductive group defined over $\mathbb{F}_q$ and denote by $F$ the geometric Frobenius endomorphism. If $L\subset H$ is an $F$-stable Levi subgroup, we denote by $R^H_L$ the Deligne-Lusztig induction, which is a $\ladic$-linear map from the set of $L^F$-invariant functions on $L^F$ to the set of $H^F$-invariant functions on $H^F$. We fix an $F$-stable maximal torus $T\subset H$, and denote by $W_H$ the Weyl group of $H$ defined by $T$. For any $w\in W_H$, we choose an $F$-stable maximal torus $T_w$ corresponding to the $F$-conjugacy class of $w$. Let $\mathbf{1}$ be the trivial character of $T_w^F$. The Green function $Q^H_{T_w}(u)$ is defined as the restriction of $R^H_{T_w}\mathbf{1}$ to $H^F_u$, the subset of unipotent elements. These functions only depend on the $F$-conjugacy class of $w$.
\begin{Prop}(\cite[Proposition 12.2]{DM91})
Let $s\in H^F$ be a semi-simple element and $u\in H^F$ a unipotent element that commutes with $s$. Let $\theta$ be a $\ladic$-valued function on $T^F_w$. We have
\begin{equation}\label{eq-char-formula}
R^H_{T_w}\theta(us)=\frac{1}{|C_H(s)^{\circ F}|}\sum_{\{h\in H^{F}\mid hsh^{-1}\in T_{w}\}}Q^{C_H(s)^{\circ}}_{C_{h^{-1}T_{w}h}(s)}(u)\theta(hsh^{-1}).
\end{equation}
\end{Prop}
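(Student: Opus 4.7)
\medskip

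\noindent\textbf{Proof plan.} The formula is classical (Deligne--Lusztig); the plan is to recover it by reducing a global Lefschetz computation on a Deligne--Lusztig variety to a disjoint union of Lefschetz computations on smaller Deligne--Lusztig varieties attached to the centraliser $C_H(s)^{\circ}$. Concretely, fix a Borel $B=T\cdot U$ and the associated Deligne--Lusztig variety
\[
X_w \;=\;\{\,gU\in H/U \mid g^{-1}F(g)\in U\dot w U\,\}
\]
on which $H^F$ acts by left multiplication and $T_w^F$ acts by a twisted right multiplication. By definition
\[
R^H_{T_w}\theta(g)\;=\;\frac{1}{|T_w^F|}\sum_{t\in T_w^F}\theta(t)\,\mathrm{Tr}\!\bigl((g,t^{-1})\mid H^{\ast}_c(X_w,\ladic)\bigr).
\]

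\noindent First I would invoke the Deligne--Lusztig fixed-point principle (\cite[Prop.~10.15]{DM91} or its analogue), which, given the Jordan decomposition $g=us$ with $[s,u]=1$, expresses the Lefschetz number of $(us,t^{-1})$ on $X_w$ as the Lefschetz number of the unipotent part $u$ acting on the $s$-fixed subvariety $X_w^{s,t^{-1}}$. This is the usual trick that separates semi-simple and unipotent contributions: once $s$ and $t^{-1}$ act through the finite group, their contribution is combinatorial, and only $u$ remains to contribute via $\ell$-adic cohomology.

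\noindent Next I would compute the fixed locus $X_w^{s,t^{-1}}$ geometrically. A coset $gU$ is fixed by $(s,t^{-1})$ iff $g^{-1}sg\in T_w$ and this element equals a prescribed $F$-conjugate of $t$. Stratifying by the $H^F$-element $h$ that conjugates $s$ into $T_w$, the fixed locus becomes a disjoint union indexed by $\{h\in H^F\mid hsh^{-1}\in T_w\}$ (modulo the obvious stabiliser), and each stratum identifies with a Deligne--Lusztig variety inside $C_H(s)^{\circ}$ for the maximal torus $C_{h^{-1}T_wh}(s)$. The key calculation here is the identification of the parabolic and Levi data inside $C_H(s)^{\circ}$, which uses that $s$ is semi-simple (so $C_H(s)^{\circ}$ is reductive) and the Lang--Steinberg-type rewriting diagram~\eqref{F_w} for the twisted Frobenius on the centraliser.

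\noindent With the fixed locus identified, the Lefschetz trace of $u$ on each stratum is exactly, by definition, the Green function $Q^{C_H(s)^{\circ}}_{C_{h^{-1}T_wh}(s)}(u)$, and the corresponding scalar from the $s,t$ action is $\theta(hsh^{-1})$. Combining, summing over $h$ and $t$, and absorbing the stabiliser order $|C_H(s)^{\circ F}|$ coming from over-counting in the $h$-stratification yields the claimed formula. The main obstacle is the second step: carrying out the geometric identification of $X_w^{s,t^{-1}}$ cleanly and checking that the index set is genuinely $\{h\in H^F\mid hsh^{-1}\in T_w\}$ with the correct stabiliser; the rest of the argument is bookkeeping of traces and normalisations.
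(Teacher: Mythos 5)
The paper offers no proof of this statement: it is quoted verbatim as \cite[Proposition 12.2]{DM91}, so the ``paper's proof'' is the citation itself. Your outline is a faithful sketch of the classical Deligne--Lusztig argument behind that citation (Lefschetz interpretation of $R^H_{T_w}\theta$, the fixed-point principle separating the $p'$-part $(s,t^{-1})$ from the $p$-part $u$, identification of the fixed locus with a disjoint union of Deligne--Lusztig varieties for $C_H(s)^{\circ}$ indexed by $\{h\in H^F\mid hsh^{-1}\in T_w\}$ up to the stabiliser $C_H(s)^{\circ F}$, and recognition of the unipotent traces as Green functions), so it is correct and matches the source the paper relies on.
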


\subsubsection{}\label{A^F_tau}
Fix a semi-simple element $s\in H^F$. For any $\tau\in W_H$, put $$A_{\tau}=\{h\in H\mid hsh^{-1}\in T_{\tau}\},\quad A^F_{\tau}=A_{\tau}\cap H^F.$$ The set $A^F_{\tau}$ is crucial for the computation of Deligne-Lusztig characters. If $A^F_{\tau}$ is non empty, we may assume that $s\in T_{\tau}^F$, since $R^H_{T_w}\theta$ is invariant under conjugation by $H^F$. Put $L:=C_H(s)^{\circ}$. For any $h\in A^F_{\tau}$, $C_{h^{-1}T_{\tau}h}(s)$ is an $F$-stable maximal torus of $L$. In particular, $T_{\tau}$ is an $F$-stable maximal torus of $L$. Let $W_L:=W_L(T_{\tau})$ be the Weyl group of $L$ defined by $T_{\tau}$. The $L^F$-conjugacy classes of the $F$-stable maximal tori of $L$ are parametrised by the $F$-conjugacy classes of $W_L$. There is a natural map from $A_{\tau}^F$ to the set of $F$-conjugacy classes of $W_L$, sending $h\in A^F_{\tau}$ to the class of $C_{h^{-1}T_{\tau}h}(s)$. Denote by $B_{\tau}$ the image of this map. Let $\nu\in W_L$ represente an $F$-conjugacy class of $W_L$ and denote by $A^F_{\tau,\nu}$ the inverse image in $A^F_{\tau}$ of this $F$-conjugacy class.

Let $g_{\tau}\in H$ such that $T_{\tau}=g_{\tau}Tg_{\tau}^{-1}$. Write $s_0=g_{\tau}^{-1}sg_{\tau}$ and $L_0=C_H(s_0)$. Let $W_{L_0}$ be the Weyl group of $L_0$ defined by $T$. The $F$-conjugacy classes of $W_L$ are in natural bijection with the $\tau$-conjugacy classes of $W_{L_0}$. Two elements $v$ and $w$ are $\tau$-conjugate if there is some $x\in W_L$ such that $v=xw\tau x^{-1}\tau^{-1}$. Therefore, we can identify $B_{\tau}$ with a set of $\tau$-conjugacy classes of $W_{L_0}$.
\begin{Prop}\label{card-Ataunu}
We have $$|A_{\tau,\nu}^F|=|L^F|z_{\tau}z^{-1}_{\nu},$$where $z_{\tau}$ is the cardinality of the centraliser of $\tau$ in $W_H$, and $z_{\nu}$ is the cardinality of the stabiliser of $\nu$ under the $\tau$-twisted conjugation by $W_{L_0}$.
\end{Prop}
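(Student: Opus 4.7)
The plan is to count $A^F_{\tau,\nu}$ by exploiting the natural right action of $L^F$ on $A^F_\tau$ defined by $(h,\ell)\mapsto h\ell$; this is well-defined since every $\ell\in L$ centralises $s$ and hence preserves the condition $hsh^{-1}\in T_\tau$. The map $\Phi\colon A_\tau^F\to\{F\text{-stable maximal tori of }L\}$, $h\mapsto h^{-1}T_\tau h$, is $L^F$-equivariant with respect to inverse conjugation on tori, and $A^F_{\tau,\nu}$ is by construction the preimage of the $L^F$-orbit of a chosen representative $T_\nu^L$. By orbit--stabiliser the orbit has size $|L^F|/|N_L(T_\nu^L)^F|$, and all fibres of $\Phi$ above this orbit have the same cardinality, so I would first reduce to
\[
|A^F_{\tau,\nu}|=\frac{|L^F|}{|N_L(T_\nu^L)^F|}\cdot|\Phi^{-1}(T_\nu^L)|,
\]
and then evaluate the three remaining quantities separately.

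For the fibre, $\Phi^{-1}(T_\nu^L)=\{h\in H^F\mid hT_\nu^L h^{-1}=T_\tau\}$. Since $\nu\in B_\tau$ by the very definition of $A^F_{\tau,\nu}$, one such $h_0\in H^F$ exists: start from any $h\in A^F_\tau$ with $\Phi(h)$ in the class of $\nu$ and right-translate by an appropriate element of $L^F$. The full transporter in $H$ is then the single coset $N_H(T_\tau)h_0$, whose intersection with $H^F$ is $N_H(T_\tau)^F h_0$, giving $|\Phi^{-1}(T_\nu^L)|=|N_H(T_\tau)^F|$. I would then apply the exact sequence $1\to T_\tau\to N_H(T_\tau)\to W_H(T_\tau)\to 1$: taking $F$-fixed points remains short exact because $H^1(F,T_\tau)=0$ by Lang--Steinberg, so $|N_H(T_\tau)^F|=|T_\tau^F|\cdot|W_H(T_\tau)^F|$. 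Transporting via $\ad g^{-1}$ (with $g^{-1}F(g)=\dot\tau$) converts the $F$-action on $W_H(T_\tau)$ into the $F_\tau$-action on $W_H$, which for $\GL_n$ is simply conjugation by $\tau$; hence $|W_H(T_\tau)^F|=|C_{W_H}(\tau)|=z_\tau$.

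An entirely parallel computation inside $L$ gives $|N_L(T_\nu^L)^F|=|T_\nu^{L,F}|\cdot|W_L(T_\nu^L)^F|$. To identify the second factor with $z_\nu$ I would transport the $F$-structure from $L$ to $L_0$ via $\ad g$ (converting $F$ to $F_\tau$) and then from $T_\nu^L$ back to $T$ via a Lang element for $\dot\nu$; the resulting twisted Frobenius on $W_{L_0}$ acts as $x\mapsto \dot\nu\tau x\tau^{-1}\dot\nu^{-1}$, whose fixed-point set is exactly the stabiliser $\{x\in W_{L_0}\mid x\nu\tau x^{-1}\tau^{-1}=\nu\}$, i.e.\ $z_\nu$. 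Finally, since $\nu\in B_\tau$ the tori $T_\tau$ and $T_\nu^L$ are $H^F$-conjugate (the element $h_0$ above being a witness), so $|T_\tau^F|=|T_\nu^{L,F}|$ as orders of conjugate finite groups. Assembling everything,
\[
|A^F_{\tau,\nu}|=\frac{|L^F|}{|T_\nu^{L,F}|\cdot z_\nu}\cdot|T_\tau^F|\cdot z_\tau=\frac{|L^F|\cdot z_\tau}{z_\nu}.
\]

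The main delicacy will be the bookkeeping in the third step: one has to follow the chain of identifications $W_L(T_\nu^L)\cong W_L\cong W_{L_0}$ and check that the twisted Frobenius appearing naturally in the torus-count agrees with the $\tau$-twisted conjugation of \S\ \ref{A^F_tau} on the nose, rather than through some conjugate variant. Once the conventions are aligned, the remaining ingredients are the standard consequences of Lang--Steinberg for tori, applied both inside $H$ and inside $L$.
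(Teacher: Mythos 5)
Your proposal is correct. Note that the paper itself gives no argument here: it simply cites \cite[Equation (4.3.3)]{HLR} (with the remark that the $\bar{A}_{\nu}$ there is a subset of $A^F_{\tau}/L^F$), so you have supplied the self-contained proof that the paper outsources. Your route --- the right $L^F$-action on $A^F_{\tau}$, the equivariant map $h\mapsto h^{-1}T_{\tau}h$, orbit--stabiliser for the orbit of $T^L_{\nu}$, identification of the fibre with a coset of $N_H(T_{\tau})$, and the two Lang--Steinberg counts $|N_H(T_{\tau})^F|=|T_{\tau}^F|\,z_{\tau}$ and $|N_L(T^L_{\nu})^F|=|T^{L,F}_{\nu}|\,z_{\nu}$ --- is the standard derivation and is essentially what underlies the cited formula in \cite{HLR}, so the two approaches do not genuinely diverge; yours just makes the mechanism explicit. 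One step you use silently and should state: when you replace the fibre $\Phi^{-1}(T^L_{\nu})\subset A^F_{\tau}$ by the full transporter $\{h\in H^F\mid hT^L_{\nu}h^{-1}=T_{\tau}\}$, you need that any such $h$ automatically lies in $A_{\tau}$; this holds because $s\in Z(L)$ (it is central in its own centraliser and lies in $L$), hence $s$ belongs to \emph{every} maximal torus of $L$, in particular to $T^L_{\nu}$, so $hsh^{-1}\in T_{\tau}$. With that observation recorded, and the convention-matching for the twisted Frobenius on $W_{L_0}$ carried out as you describe (your fixed-point set $\{x\mid \nu\tau x\tau^{-1}\nu^{-1}=x\}$ is indeed the same subgroup as the stabiliser $\{x\mid x\nu\tau x^{-1}\tau^{-1}=\nu\}$, after inverting $x$), the argument is complete.
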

\begin{proof}
See \cite[Equation (4.3.3)]{HLR}. Note that the $\bar{A}_{\nu}$ therein is a subset of the quotient $A^F_{\tau}/L^F$.
\end{proof}

\subsection{$\sigma$-stable irreducible characters}\hfill

Write $G=\GL_n$. Denote by $T\subset G$ the maximal torus consisting of diagonal matrices, $B\subset G$ the Borel subgroup consisting of upper triangular matrices. Denote by $\Delta$ the simple roots of $G$ defined by $(T,B)$.

\subsubsection{}
Let $M\subset G$ be an $F$-stable Levi subgroup and let $T_M\subset M$ be an $F$-stable maximal torus. Let $W_M$ be the Weyl group of $M$ defined by $T_M$. For any $w\in W_M$, we choose an $F$-stable maximal torus $T_w\subset M$ whose $M^F$-conjugacy class corresponds to the $F$-conjugacy class of $w$. Let $\theta$ be a regular linear character of $M^F$ (See \cite[\S 3.1 (a), (b)]{LS}). Since $F$ preserves $T_M$, it acts on $W_M$. Let $\varphi\in\Irr(W_M)^F$. It extends to a character $\tilde{\varphi}\in\Irr(W_M\rtimes\lb F\rb)$. Such an extension is not unique. Write 
\begin{equation}\label{eq-alm-char}
R^G_{\varphi}\theta:=|W_M|^{-1}\sum_{w\in W_M}\tilde{\varphi}(wF)R^G_{T_w}\theta_{T_w},
\end{equation}
 where $\theta_{T_w}$ is the restriction of $\theta$ to $T_w^F$. By \cite[Theorem 3.2]{LS}, for some choice of $\tilde{\varphi}$, the function $\epsilon_G\epsilon_MR^G_{\varphi}\theta$ is an irreducible character of $\GL_n(q)$.

\subsubsection{}\label{GLnSigma-F-Levi}
For any subset $I\subset\Delta$, denote by $L_I$ the corresponding standard Levi subgroup of $G$ with respect to $(T,B)$. If $L_I$ is $\sigma$-stable, then we may choose an isomorphism $$L_I\cong\GL_{n_0}\times\prod_i(\GL_{n_i}\times\GL_{n_i}).$$Write $W_I:=N_G(L_I)/L_I$. Since $\sigma$ leaves $L_I$ stable, it induces an automorphism of $W_I$. As is explained in \cite[\S 3.2.2]{Shu3}, there is an isomorphism $$W_{(G^{\sigma})^{\circ}}((L_I^{\sigma})^{\circ})\lisom W_I^{\sigma}.$$ Therefore, for any $w\in W_I^{\sigma}$, we may choose $\dot{w}\in (G^{\sigma})^{\circ}$ representing $w$. If $g\in(G^{\sigma})^{\circ}$ is such that $g^{-1}F(g)=\dot{w}$, then define $L_{I,w}:=gL_Ig^{-1}$. It is an $F$-stable and $\sigma$-stable Levi subgroup of $G$. If $M$ is an $F$-stable and $\sigma$-stable Levi factor of a $\sigma$-stable parabolic subgroup, then $M$ is necessarily $G^F$-conjugate to an $L_{I,w}$ for some $I$ and $w$.

\subsubsection{}\label{sss-sigst}
Let $M=L_{I,w}$ for some $I$ and $w$ as above. Write $M$ as $M_0\times M_1$ with $M_0\cong\GL_{n_0}$ and $M_1\cong\prod_i(\GL_{n_i}\times\GL_{n_i})$, where $n_0$ and the $n_i$'s are as in the previous paragraph. Let $T_M\subset M$ be an $F$-stable and $\sigma$-stable maximal torus. We can write $T_M=T_0\times T_1$ with $T_0\subset M_0$ and $T_1\subset M_1$. We may assume that $T_0\subset M_0$ is a split maximal torus. Denote by $W_0$ (resp. $W_1$) the Weyl group of $M_0$ (resp. $M_1$) defined by $T_0$ (resp. $T_1$). 

Let $n_+$ and $n_-$ be some non negative integers such that $n_++n_-=n_0$. Let $M_{00}\subset M_0$ be a Levi subgroup containing $T_0$ such that $M_{00}^F\cong\GL_{n_+}(q)\times\GL_{n_-}(q)$. Its Weyl group is isomorphic to $\mathfrak{S}_{n_+}\times\mathfrak{S}_{n_-}$.

Let $\theta_1$ be a linear character of $M_1^F$ that is $\sigma$-stable. Then $\theta:=\mathbf{1}\boxtimes\eta\boxtimes\theta_1$ is a linear character of $M_{00}\times M_1$, where $\mathbf{1}$ is the trivial character of $\GL_{n_+}(q)$ and $\eta$ is the order 2 irreducible character of $\mathbb{F}_q^{\ast}$ and is regarded as a character of $\GL_{n_-}(q)$ via the determinant map. The actions of $F$ and $\sigma$ preserve $M_0$ and $M_1$, thus they induce actions on $W_0$ and $W_1$. Let $\varphi_1\in\Irr(W_1)^F\cap\Irr(W_1)^{\sigma}$ and let $(\varphi_+,\varphi_-)\in\Irr(\mathfrak{S}_{n_+}\times\mathfrak{S}_{n_-})$. Then $\varphi:=(\varphi_+,\varphi_-,\varphi_1)$ is an irreducible character of the Weyl group of $M_{00}\times M_1$.

For any connected reductive algebraic group $H$ defined over $\mathbb{F}_q$, define $\epsilon_H:=(-1)^{rk_H}$ where $rk_H$ is the $\mathbb{F}_q$-rank of $H$. Note that $M_{00}\times M_1$ and $M$ have the same $\mathbb{F}_q$-rank.
\begin{Thm}(\cite[Proposition 5.2.2, Proposition 5.2.3]{Shu2})\label{sigstShu2}
Suppose that $\theta$ is a regular linear character of $M_{00}^F\times M_1^F$. Then for some choice of the extension $\tilde{\varphi}$, the function $\epsilon_G\epsilon_MR^G_{\varphi}\theta$ is a $\sigma$-stable irreducible character of $\GL_n(q)$. Moreover, every $\sigma$-stable irreducible character of $\GL_n(q)$ is of this form.
\end{Thm}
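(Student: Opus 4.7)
The strategy is to combine the Lusztig--Srinivasan parametrisation of $\Irr(\GL_n(q))$ with a careful analysis of how the outer automorphism $\sigma$ acts on the parametrising data, and then to read off the $\sigma$-fixed points. First I would invoke Lusztig--Srinivasan to write every $\chi \in \Irr(\GL_n(q))$ as $\epsilon_G \epsilon_M R^G_{\varphi} \theta$ for a triple $(M,\theta,\varphi)$, unique up to $G^F$-conjugation, with $M$ an $F$-stable Levi, $\theta$ a regular linear character of $M^F$, and $\varphi \in \Irr(W_M)^F$ (together with a suitable extension $\tilde{\varphi}$). Since $\sigma$ commutes with $F$, it sends the triple to $(\sigma M,\, \theta\circ\sigma^{-1},\, \varphi\circ\sigma^{-1})$, so $\sigma$-stability of $\chi$ is equivalent to $G^F$-conjugacy of the original triple and its $\sigma$-image.

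Next I would reduce $M$ to a $\sigma$-stable form. Up to $G^F$-conjugation one may assume $M$ is $\sigma$-stable, and the analysis of \S\ref{GLnSigma-F-Levi} then lets us take $M = L_{I,w}$ for some $I$ and $w \in W_I^\sigma$, which decomposes canonically as $M = M_0 \times M_1$ with $M_0 \cong \GL_{n_0}$ carrying the standard involution $\sigma$ and $M_1 \cong \prod_i (\GL_{n_i} \times \GL_{n_i})$ with $\sigma$ swapping the two factors in each pair. On $M_1^F$, $\sigma$-stability of the linear character $\theta_1$ is the straightforward condition that the two characters on each swapped pair agree, and this gives exactly the hypothesis $\theta_1$ is $\sigma$-stable. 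The delicate point is on $M_0^F$: a \emph{regular} linear character of $\GL_{n_0}(q)$ cannot be $\sigma$-stable when $n_0 \geq 1$, because $\sigma$-stability forces values of order dividing $2$ on the diagonal torus, contradicting regularity as soon as $n_0 \geq 1$ unless we descend to a proper Levi.

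The remedy, which produces the stated form, is to descend $M_0$ to $M_{00} = \GL_{n_+} \times \GL_{n_-}$ and take $\theta|_{M_{00}^F} = \mathbf{1} \boxtimes \eta$: here $\mathbf{1} \ne \eta$, so the pair is regular in the Lusztig--Srinivasan sense for $M_{00}$, while $\eta$ has order $2$, so the pair is $\sigma$-stable. One shows (by inspection of $\sigma$ on $M_0^F$-characters via the determinant and the transpose-inverse involution) that this is the \emph{only} source of $\sigma$-stable regular data on the $M_0$-part, forcing the splitting $n_0 = n_+ + n_-$ and the form $\mathbf{1} \boxtimes \eta$. With $M_{00}\times M_1$ as the new Levi, the Weyl group factor becomes $(\mathfrak{S}_{n_+}\times\mathfrak{S}_{n_-})\times W_1$; $\sigma$ acts trivially on the $\mathfrak{S}_{n_+}\times\mathfrak{S}_{n_-}$ piece (since $T_0$ is split and $\sigma$ preserves the $\pm$ decomposition), so $(\varphi_+,\varphi_-)$ is arbitrary, while $\varphi_1$ must lie in $\Irr(W_1)^F\cap\Irr(W_1)^\sigma$.

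Finally I would address the choice of extension $\tilde{\varphi}$. The character $\varphi$ being both $F$- and $\sigma$-fixed, Clifford theory gives extensions to $(W_{M_{00}\times M_1}\rtimes\langle F\rangle)\rtimes\langle\sigma\rangle$, but only specific choices of $\tilde{\varphi}$ make the Lusztig--Srinivasan combination $R^G_{\varphi}\theta$ genuinely $\sigma$-invariant rather than merely permuted into another irreducible character; pinning these down is the content of Propositions 5.2.2--5.2.3 of \cite{Shu2}. I expect the main obstacle to be Step 3, the rigidity argument that excludes $\sigma$-stable regular linear characters directly on $M_0^F$ and forces the descent to $M_{00}$ with the specific character $\mathbf{1}\boxtimes\eta$; the remaining steps are essentially bookkeeping of $\sigma$-actions on Lusztig--Srinivasan data.
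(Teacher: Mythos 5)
First, note that the paper does not actually prove this statement: Theorem \ref{sigstShu2} is quoted from Propositions 5.2.2 and 5.2.3 of \cite{Shu2}, so there is no internal proof here to compare yours against. Your overall strategy --- run the Lusztig--Srinivasan parametrisation, let $\sigma$ act on the parametrising triples $(M,\theta,\varphi)$, and read off the fixed points --- is the natural and presumably intended route.

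That said, two of your concrete assertions are wrong and would derail the argument if taken literally. First, on $M_1^F\cong\prod_i(\GL_{n_i}(q)\times\GL_{n_i}(q))$ the automorphism $\sigma$ does not merely swap the two factors of each pair; it is built from transpose-inverse, so it also inverts, and a $\sigma$-stable linear character has the form $(\theta_i,\theta_i^{-1})_i$, not $(\theta_i,\theta_i)_i$ --- this is stated explicitly in \S\ref{sss-reg-lin} and is exactly what makes the regularity condition read $\theta_i\ne\theta_j^{\pm1}$ and $\theta_i^2\ne\mathbf{1}$. Second, your ``rigidity'' step --- that no regular linear character of $\GL_{n_0}(q)$ can be $\sigma$-stable for $n_0\ge1$ --- is false: since $\sigma$ acts on linear characters through inversion of the determinant, a linear character $\theta_0$ of $\GL_{n_0}(q)$ is $\sigma$-stable iff $\theta_0^2=\mathbf{1}$, i.e.\ $\theta_0\in\{\mathbf{1},\eta\}$, and both are regular for the single factor $M_0$. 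The actual reason for the descent to $M_{00}=\GL_{n_+}\times\GL_{n_-}$ with character $\mathbf{1}\boxtimes\eta$ is that the self-inverse linear characters of $\mathbb{F}_q^{\ast}$ are exactly $\mathbf{1}$ and $\eta$, so the $\sigma$-fixed part of the semisimple parameter splits $n_0$ into at most two blocks, one for each, with regularity (distinctness of the characters attached to distinct factors) capping the number of such blocks at two. Without this correction your Step 3 argues for the wrong dichotomy. Finally, your last paragraph on the choice of $\tilde\varphi$ simply defers to \cite{Shu2}; that is precisely where the substantive work lives (showing that for a suitable extension the combination $\epsilon_G\epsilon_M R^G_{\varphi}\theta$ is an irreducible character, not merely a $\sigma$-stable virtual one), so the sketch as written does not constitute a proof.
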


\subsubsection{}\label{sss-reg-lin}
Let $M=L_{I,w}$ with $w=1$, i.e. a standard Levi subgroup. Then $\epsilon_G=\epsilon_M$ and $M_1^F\cong\prod_i(\GL_{n_i}(q)\times\GL_{n_i}(q))$ and $F$ acts trivially on $W_1\cong\prod_i(\mathfrak{S}_{n_i}\times\mathfrak{S}_{n_i})$. A $\sigma$-stable linear character of $M_1^F$ is of the form $(\theta_i,\theta_i^{-1})_i$, where $\theta_i$ is a linear character of $\GL_{n_i}(q)$ for each $i$. The linear character $\theta$ in \S \ref{sss-sigst} is regular if and only if $\theta_i\ne\theta_j^{\pm}$ whenever $i\ne j$ and $\theta_i^2\ne \mathbf{1}$ for all $i$. The set of these $\sigma$-stable linear characters of $M_1^F$ is denoted by $\Irr^{\sigma}_{\reg}(M_1^F)$.

\subsection{Types}\hfill

\textit{Types} are some combinatorial data that are used to describe $\sigma$-stable irreducible characters associated to standard Levi subgroups.

\subsubsection{}\label{sss-types}
Denote by $\mathfrak{T}$ the set of data of the form $$\bs\omega=\omega_+\omega_-(\omega_i)_i,$$where $\omega_+$ and $\omega_-$ are partitions and $(\omega_i)_i$ is an unordered sequence of non trivial partitions. An element of $\mathfrak{T}$ is called a type. For any type $\bs\omega$, we denote by $\bs\omega_{\ast}=(\omega_i)_i$ the part without the components $\omega_+$ and $\omega_-$, so we may write $\bs\omega=\omega_+\omega_-\bs\omega_{\ast}$. The sequence $\bs\omega_{\ast}$ is equivalent to a sequence $(m_{\lambda})_{\lambda\in\mathcal{P}}$, where $\mathcal{P}$ is the set of partitions and $m_{\lambda}$ is the multiplicity of $\lambda$ in $\bs\omega_{\ast}$. The size of a type is defined by $|\bs\omega|:=|\omega_+|+|\omega_-|+\sum_i|\omega_i|$. For any $a\in\mathbb{Z}_{\ge0}$, we denote by $\mathfrak{T}(a)$ the subset of types of size $a$. If we require $\bs\omega_{\ast}$ to be an ordered sequence of partitions, then we call $\bs\omega$ an \textit{ordered type}. We denote by $\tilde{\mathfrak{T}}$ the set of ordered types. The subsets of $\tilde{\mathfrak{T}}$ and $\mathfrak{T}$ consisting of elements with $\omega_+=\omega_-=\varnothing$ are denoted by $\tilde{\mathfrak{T}}_{\ast}$ and $\mathfrak{T}_{\ast}$. 

Denote by $l(\bs\omega_{\ast})$ the length of $\bs\omega_{\ast}$. Write $\bs\omega_{\ast}=(m_{\lambda})_{\lambda\in\mathcal{P}}$ and define
\begin{equation}
N(\bs\omega_{\ast})=\prod_{\lambda}m_{\lambda}!,
\end{equation}
and
\begin{equation}
K(\bs\omega_{\ast})=(-1)^{l(\bs\omega_{\ast})}l(\bs\omega_{\ast})!.
\end{equation}
Define $\{\bs\omega_{\ast}\}:=(2m_{\lambda})_{\lambda\in\mathcal{P}}$ and $\{\bs\omega\}:=\omega_+\omega_-\{\bs\omega_{\ast}\}$. Define a map $[]:\mathfrak{T}\rightarrow\mathcal{P}$ by requiring that the parts of $[\bs\omega]$ are the union of the parts of the partitions $\omega_+$, $\omega_-$ and $\omega_i$'s.

\subsubsection{}
Denote by $\tilde{\mathfrak{T}}_s$ the subset of $\tilde{\mathfrak{T}}$ consisting of elements of the form $$\bs\omega=(1^{n_+})(1^{n_-})((1^{n_i}))_i.$$ We define its dual type by $\bs\omega^{\ast}=(n_+)(n_-)((n_i))_i$. Elements of $\tilde{\mathfrak{T}}_s$ can also be represented by a sequence of integers: $\bs\omega=n_+n_-(n_i)_i$ if $\bs\omega=(1^{n_+})(1^{n_-})((1^{n_i}))_i$. There is a surjective map from $\tilde{\mathfrak{T}}$ to $\tilde{\mathfrak{T}}_s$ sending $\bs\omega=\omega_+\omega_-(\omega_i)_i$ to $|\omega_+||\omega_-|(|\omega_i|)_i$. For $\bs\omega\in\tilde{\mathfrak{T}}_s$, we denote by $\tilde{\mathfrak{T}}(\bs\omega)$ the inverse image of $\bs\omega$ under this map. For $\bs\alpha$, $\bs\beta\in\tilde{\mathfrak{T}}$, we write $\bs\alpha\thickapprox\bs\beta$, if they lie in the same $\tilde{\mathfrak{T}}(\bs\omega)$.

Let $s\in\GL_n$ be a semi-simple elements. Suppose that the multiplicities of its eigenvalues are given by a sequence of integers $(n_i)_i$. Then we say the conjugacy class of $s$ is of type $\bs\omega=\bs\omega_{\ast}=((1^{n_i}))_i$.

\subsubsection{}
For any $\chi\in\Irr(\GL_n(q))^{\sigma}$, there exists some $F$-stable and $\sigma$-stable Levi subgroup $M$ such that $\chi=\epsilon_G\epsilon_MR^G_{\varphi}\theta$ as in Theorem \ref{sigstShu2}. This character is induced from a smaller Levi subgroup $M_{00}\times M_1$. Denote by $\Irr^{\sigma}_{st}\subset\Irr(\GL_n(q))^{\sigma}$ the subset of characters such that $M$ can be chosen to be a standard Levi subgroup. Define a map 
\begin{equation}
\pi:\Irr^{\sigma}_{st}\longrightarrow\mathfrak{T}
\end{equation}
as follows. Let $\varphi=(\varphi_+,\varphi_-,\varphi_1)$ as in \S \ref{sss-sigst}, and let $\omega_+$ and $\omega_-$ be the partitions corresponding to the characters $\varphi_+$ and $\varphi_-$. Since the Weyl group of $M_1$ is isomorphic to $\prod_i(\mathfrak{S}_{n_i}\times\mathfrak{S}_{n_i})$, $\varphi_1$ gives a sequence of partitions $\bs\omega_{\ast}$. Each partition in $\bs\omega_{\ast}$ has even multiplicity. Let $\bs\omega'_{\ast}$ be such that $\{\bs\omega'_{\ast}\}=\bs\omega_{\ast}$. Then we define $\pi(\chi):=\omega_+\omega_-\bs\omega'_{\ast}$. Define 
\begin{equation}
\Irr^{\sigma}_{\bs\omega}:=\pi^{-1}(\bs\omega).
\end{equation}
It is empty unless $|\{\bs\omega\}|=n$. Its elements are called $\sigma$-stable irreducible characters of type $\bs\omega$. 

For all $\sigma$-stable characters of the same type, the Levi subgroup $M$ in Theorem \ref{sigstShu2} can be chosen to be the same, and the characters $\varphi$ are the same by definition. Therefore, there is a surjective map
\begin{equation}\label{eq-reg->omega}
\Irr^{\sigma}_{\reg}(M_1^F)\longrightarrow \Irr^{\sigma}_{\bs\omega}.
\end{equation}
The cardinality of the fibre is equal to $2^{l(\bs\omega_{\ast})}N(\bs\omega_{\ast})$ as is explained in \cite[\S 3.5.2]{Shu3}.

\subsubsection{}
Let $L$ be a Levi subgroup of $\GL_n$ containing a split maximal torus $T$. Denote by $W_L$ the Weyl group of $L$ defined by $T$. By choosing an isomorphism $L\cong\GL_{n_+}\times\GL_{n_-}\times\prod_i\GL_{n_i}$, we can parametrise the conjugacy classes of $W_L$ by $\tilde{\mathfrak{T}}(\bs\omega)$, with $\bs\omega=(1^{n_+})(1^{n_-})((1^{n_i}))_i$. (If $L$ is the centraliser of a semi-simple element, then we make the convention that $n_+=n_-=0$.) Then the map $[]:\mathfrak{T}\rightarrow\mathcal{P}$ is simply the map between conjugacy classes induced by the inclusion $W_L\hookrightarrow W$ where $W\cong\mathfrak{S}_n$ is the Weyl group of $\GL_n$ defined by $T$.

Suppose that $\bs\omega$ is the type of an element $s\in T^F$. Let $B_{\tau}$ be the set defined in \S \ref{A^F_tau}.
\begin{Lem}
Let $\bs\tau$ be the partition corresponding to the conjugacy class of $\tau$. Then $$B_{\tau}=\{\bs\nu\in\tilde{\mathfrak{T}}(\bs\omega)\mid [\bs\nu]=\bs\tau\}.$$
\end{Lem}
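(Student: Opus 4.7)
The plan is to unpack the map $h \mapsto C_{h^{-1}T_{\tau}h}(s)$ that defines $B_\tau$, give an intrinsic characterization of its image, and then match with $\{\bs\nu : [\bs\nu]=\bs\tau\}$ via the description of $W_L$-classes in the paragraph immediately preceding the statement.

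First I would observe a key simplification: for $h \in A_\tau^F$ we have $hsh^{-1} \in T_\tau$, hence $s \in h^{-1} T_\tau h$. Since $T_\tau$ is a torus, $h^{-1} T_\tau h$ is abelian, so every element of it commutes with $s$. Therefore $C_{h^{-1}T_\tau h}(s) = h^{-1}T_\tau h$, and this is automatically an $F$-stable maximal torus of $L = C_H(s)^{\circ}$ (it contains $s$, so it lies in $C_H(s)^{\circ}$). In other words, the map producing $B_\tau$ is just $h \mapsto h^{-1}T_\tau h$.

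Next I would establish the intrinsic characterization: $B_\tau$ equals the set of $L^F$-conjugacy classes of $F$-stable maximal tori $T'$ of $L$ that are $H^F$-conjugate to $T_\tau$. The inclusion $\subset$ is clear. For the reverse, write such a $T'$ as $h^{-1}T_\tau h$ with $h \in H^F$; since $L = C_H(s)^{\circ}$ has $s$ in its center, $s$ lies in every maximal torus of $L$, so $s \in T'$, which forces $hsh^{-1} \in T_\tau$, i.e.\ $h \in A_\tau^F$. Finally, the fact that $T$ is split means $F$ acts trivially on $W$ and on $W_L$, so $L^F$-conjugacy classes of $F$-stable maximal tori of $L$ are in bijection with conjugacy classes of $W_L$, and similarly for $H$ with $W$. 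The forgetful map (viewing an $F$-stable maximal torus of $L$ as one of $H$) corresponds under these parametrizations to the map $W_L/\mathrm{conj} \to W/\mathrm{conj}$ induced by the inclusion $W_L \hookrightarrow W$. The paragraph preceding the statement identifies this map, under $W_L/\mathrm{conj} \cong \tilde{\mathfrak{T}}(\bs\omega)$, with $[\,\cdot\,] : \tilde{\mathfrak{T}}(\bs\omega) \to \mathcal{P}$. Hence $B_\tau$ corresponds to $\{\bs\nu \in \tilde{\mathfrak{T}}(\bs\omega) \mid [\bs\nu] = \bs\tau\}$, as claimed.

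The one mildly subtle ingredient is the intrinsic characterization in the second paragraph, where the centrality of $s$ in $L$ is used to promote an arbitrary $H^F$-conjugator of $T_\tau$ into $L$ to an element of $A_\tau^F$. All remaining steps are just unwinding definitions and invoking the combinatorial dictionary already set up.
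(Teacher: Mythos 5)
Your proof is correct, and it is exactly the argument the paper intends: the paper's entire proof is ``Unwind the definitions,'' and your write-up simply carries out that unwinding (the observation that $C_{h^{-1}T_\tau h}(s)=h^{-1}T_\tau h$, the use of $s\in Z(L)\subset T'$ to get surjectivity onto the classes $G^F$-conjugate to $T_\tau$, and the compatibility of $[\,\cdot\,]$ with the inclusion $W_L\hookrightarrow W$ stated just before the lemma).
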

\begin{proof}
Unwind the definitions.
\end{proof}

\section{$\GL_n\lb\sigma\rb~$-Character Varieties}\label{Section-Char-Var}
In this section, we give the definition of $\GL_n\lb\sigma\rb~$-character varieties and the definition of generic conjugacy classes. We also explain how to pass to finite fields.

\subsection{Definition of $\GL_n\lb\sigma\rb~$-Character Varieties}\label{Defn-Char.Var}

\subsubsection{}
Let $p':\tilde{\Sigma}'\rightarrow \Sigma'$ be an unbranched double covering of compact Riemann surfaces, and let $\mathcal{R}=\{x_j\}_{1\le j\le k}\subset\Sigma'$ be a finite set. Put $\Sigma=\Sigma'\setminus\mathcal{R}$ and $\tilde{\Sigma}=p^{\prime -1}(\Sigma)$ and denote by $p:\tilde{\Sigma}\rightarrow \Sigma$ the restriction of $p'$. Fix the base points of $\tilde{\Sigma}$ and $\Sigma$ so that we have an injective homomorphism of fundamental groups $\pi_1(\tilde{\Sigma})\rightarrow\pi_1(\Sigma)$. Let $\Rep(\Sigma)$ be the space of homomorphisms $\rho$ that make the following diagram commute:
$$
\begin{tikzcd}[row sep=2.5em, column sep=0.2em]
\pi_1(\Sigma) \arrow[dr, swap, "q_1"] \arrow[rr, "\rho"] && \GL_n\lb\sigma\rb \arrow[dl, "q_2"]\\
& \Gal(\tilde{\Sigma}/\Sigma)\cong\bs\mu_2
\end{tikzcd}
$$ 
where $q_1$ is the quotient by $\pi_1(\tilde{\Sigma})$ and $q_2$ is the quotient by $\GL_n$. Our $\GL_n\lb\sigma\rb~$-character variety $\Ch(\Sigma)$ is defined as the categorical quotient of $\Rep(\Sigma)$ for the conjugation action of $\GL_n$ on $\GL_n\lb\sigma\rb$.

\subsubsection{}
Denote by $g$ the genus of $\Sigma'$. We may choose the generators $\alpha_i$, $\beta_i$, $1\le i\le g$, and $\gamma_j$, $1\le j\le k$, of $\pi_1(\Sigma)$ that satisfy the relation 
\begin{equation}
\prod_{i=1}^g[\alpha_i,\beta_i]\prod_{j=1}^k\gamma_j=1,
\end{equation}
and the condition: the generators $\beta_1$, $\alpha_i$, $\beta_i$, $2\le i\le g$, and $\gamma_j$, $1\le j\le k$, are the images of some elements of $\pi_1(\tilde{\Sigma})$, while $\alpha_1$ lie in $\pi_1(\Sigma)\setminus\pi_1(\tilde{\Sigma})$. Let $\mathcal{C}=(C_j)_{1\le j\le k}$ be a $k$-tuple of semi-simple conjugacy classes contained in $\GL_n$. We define the subvariety $\Rep_{\mathcal{C}}(\Sigma)\subset\Rep(\Sigma)$ as consisting of $\rho\in\Rep(\Sigma)$ such that $\rho(\gamma_j)\in C_j$ for all $j$. Then we define $\Ch_{\mathcal{C}}(\Sigma):=\Rep_{\mathcal{C}}(\Sigma)\ds\GL_n$. The defining equation of $\Rep_{\mathcal{C}}(\Sigma)$ is then
\begin{equation}\label{CharVarEq}
\Rep_{\mathcal{C}}(\Sigma)=\left\{(A_i,B_i)_i(X_j)_j\in \GL_n^{2g}\times \prod_{j=1}^{k} C_j\mid A_1\sigma(B_1)A_1^{-1}B_1^{-1}\prod_{i=2}^g[A_i,B_i]\prod_{j=1}^{k}X_j=1\right\}.
\end{equation}

\subsection{Generic Conjugacy Classes}\label{ss-GCC}
\subsubsection{}\label{GCC}
Let $\mathcal{C}=(C_j)_{1\le j\le k}$ be a tuple of semi-simple conjugacy classes in $\GL_n$. For each $j$, let $t_j\in T$ be a representative of $C_j$, and write $t_j=(a_{j,1},\ldots,a_{j,n})$ with each $a_{j,\gamma}\in\mathbb{C}^{\ast}$. Write $\Lambda=\{1,\ldots,n\}$. For any $j$ and any subset $\mathbf{A}\subset\Lambda$, write $\smash{[\mathbf{A}]_j=\prod_{\gamma\in\mathbf{A}}a_{j,\gamma}}$. Following \cite[\S 4.4]{Shu1}, we say that $\mathcal{C}$ is generic if for any $1\le M \le [n/2]$, any $2k$-tuple $(\mathbf{A}_1,\ldots,\mathbf{A}_{k},\mathbf{B}_1,\ldots,\mathbf{B}_{k})$ of subsets of $\Lambda$ such that 
\begin{itemize}
\item
$|\mathbf{A}_1|=\cdots=|\mathbf{A}_{k}|=|\mathbf{B}_1|=\cdots=|\mathbf{B}_{k}|=M$;
\item
$\mathbf{A}_j\cap \mathbf{B}_j=\varnothing$, for all $j$,
\end{itemize}
 we have
\begin{equation}\label{GenConGLn1}
[\mathbf{A}_1]_1\cdots[\mathbf{A}_{k}]_{k}[\mathbf{B}_1]_1^{-1}\cdots[\mathbf{B}_{k}]_{k}^{-1}\ne1.
\end{equation}

We say that $\mathcal{C}$ is \textit{strongly generic} if for any $M$ and $(\mathbf{A}_1,\ldots,\mathbf{A}_{k},\mathbf{B}_1,\ldots,\mathbf{B}_{k})$ as above, we have
\begin{equation}\label{GenConGLn2}
[\mathbf{A}_1]_1\cdots[\mathbf{A}_{k}]_{k}[\mathbf{B}_1]_1^{-1}\cdots[\mathbf{B}_{k}]_{k}^{-1}\ne\pm1.
\end{equation}
\begin{Eg}
In \cite{HLR}, one may take a central conjugacy class that is also generic. But in our situation, this is not possible. See \cite[Remark 3]{Shu1}.
\end{Eg}

In fact, according to the above definition, in order for the tuple $\mathcal{C}$ to be generic, it is necessary that there exists a regular conjugacy class among the $C_j$'s.

\subsubsection{}\label{explicit-genconj-Fq}
We can also define generic conjugacy classes in the finite group $\GL_n(q)\lb\sigma\rb$. Let $T$ be the maximal torus consisting of diagonal matrices. Let $\mathcal{C}=(C_j)_{1\le j\le k}$ be a tuple of semi-simple conjugacy classes in $\GL_n(q)$, such that each $C_j$ has a representative $t_j\in T^F$. Note that not all semi-simple conjugacy classes of $\GL_n(q)$ have representatives of this form. For each $j$, let $(a_{j,1},\ldots,a_{j,n})\in(\mathbb{F}_q^{\ast})^n$ be the eigenvalues of $t_j$. With the same notations as in the previous paragraph, we say that $\mathcal{C}$ is generic (resp. strongly generic) if for any $1\le M \le [n/2]$, $(\mathbf{A}_1,\ldots,\mathbf{A}_{k},\mathbf{B}_1,\ldots,\mathbf{B}_{k})$, we have
\begin{equation}
[\mathbf{A}_1]_1\cdots[\mathbf{A}_{k}]_{k}[\mathbf{B}_1]_1^{-1}\cdots[\mathbf{B}_{k}]_{k}^{-1}\ne1\text{ (resp. $\pm1$)}.
\end{equation}

\begin{Lem}\label{disct-e.v.-genconj}
Let $(C_j)_{1\le j\le k}$ be a generic tuple of conjugacy classes, with eigenvalues given by $(a_{j,1},\ldots,a_{j,n})$. Let $M$ and $M'$ be positive integers such that $M+M'\le[n/2]$. Let $$(\mathbf{A}_1,\ldots,\mathbf{A}_{k},\mathbf{B}_1,\ldots,\mathbf{B}_{k})~\text{and }(\mathbf{A}'_1,\ldots,\mathbf{A}'_{k},\mathbf{B}'_1,\ldots,\mathbf{B}'_{k})$$ be two $2k$-tuples of subsets of $\Lambda$ such that
\begin{itemize}
\item
$|\mathbf{A}_1|=\cdots=|\mathbf{A}_{k}|=|\mathbf{B}_1|=\cdots=|\mathbf{B}_{k}|=M$;
\item
$|\mathbf{A}'_1|=\cdots=|\mathbf{A}'_{k}|=|\mathbf{B}'_1|=\cdots=|\mathbf{B}'_{k}|=M'$;
\item
The sets $\mathbf{A}_j$, $\mathbf{B}_j$, $\mathbf{A}'_j$ and $\mathbf{B}'_j$ are mutually disjoint, for all $j$.
\end{itemize}
Then,$$[\mathbf{A}_1]_1\cdots[\mathbf{A}_{k}]_{k}[\mathbf{B}_1]_1^{-1}\cdots[\mathbf{B}_{k}]_{k}^{-1}\ne\left([\mathbf{A}'_1]_1\cdots[\mathbf{A}'_{k}]_{k}[\mathbf{B}'_1]_1^{-1}\cdots[\mathbf{B}'_{k}]_{k}^{-1}\right)^{\pm 1}.$$
\end{Lem}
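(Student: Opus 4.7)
The plan is to derive a contradiction with the genericity condition (\ref{GenConGLn1}) by combining the two tuples of index sets into a single $2k$-tuple of size $M+M'$, using the hypothesis that the four families $\mathbf{A}_j, \mathbf{B}_j, \mathbf{A}'_j, \mathbf{B}'_j$ are mutually disjoint inside $\Lambda$. Since $M+M' \le [n/2]$, the resulting tuple still falls within the range for which genericity forbids a product equal to $1$.

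Write $P$ and $Q$ for the two products appearing on either side of the claimed inequality. Suppose first $P = Q$; then $P Q^{-1} = 1$, which regrouped by the index $j$ reads
$$
\prod_{j=1}^{k}[\mathbf{A}_j]_j\,[\mathbf{B}'_j]_j\,[\mathbf{B}_j]_j^{-1}\,[\mathbf{A}'_j]_j^{-1} = 1.
$$
Set $\tilde{\mathbf{A}}_j := \mathbf{A}_j \sqcup \mathbf{B}'_j$ and $\tilde{\mathbf{B}}_j := \mathbf{B}_j \sqcup \mathbf{A}'_j$. The four-way disjointness gives $|\tilde{\mathbf{A}}_j| = |\tilde{\mathbf{B}}_j| = M+M'$, $\tilde{\mathbf{A}}_j \cap \tilde{\mathbf{B}}_j = \varnothing$, and the factorisation $[\tilde{\mathbf{A}}_j]_j = [\mathbf{A}_j]_j [\mathbf{B}'_j]_j$ (similarly for $\tilde{\mathbf{B}}_j$), because the underlying products in $[\,\cdot\,]_j$ are over disjoint index sets. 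The displayed identity therefore becomes
$$
[\tilde{\mathbf{A}}_1]_1\cdots[\tilde{\mathbf{A}}_k]_k\,[\tilde{\mathbf{B}}_1]_1^{-1}\cdots[\tilde{\mathbf{B}}_k]_k^{-1} = 1,
$$
contradicting (\ref{GenConGLn1}) at size $M+M' \le [n/2]$.

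For the remaining case $P = Q^{-1}$, the identical manoeuvre applies with the alternative pairing $\hat{\mathbf{A}}_j := \mathbf{A}_j \sqcup \mathbf{A}'_j$ and $\hat{\mathbf{B}}_j := \mathbf{B}_j \sqcup \mathbf{B}'_j$, and the resulting relation again contradicts (\ref{GenConGLn1}). The argument is purely combinatorial bookkeeping; the only thing to be careful with is that disjointness is precisely what lets us identify $[\mathbf{A}_j]_j [\mathbf{A}'_j]_j$ with $[\mathbf{A}_j \sqcup \mathbf{A}'_j]_j$ (and analogously in the first case) without any spurious cancellation of factors. The hypothesis $M+M' \le [n/2]$ is exactly what keeps the combined $2k$-tuple inside the range where the genericity assumption has content, so no sharper form of genericity — in particular, no appeal to strong genericity (\ref{GenConGLn2}) — is needed.
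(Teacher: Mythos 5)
Your proof is correct and follows essentially the same route as the paper: for $P=Q$ the paper also forms $\tilde{\mathbf{A}}_j=\mathbf{A}_j\sqcup\mathbf{B}'_j$, $\tilde{\mathbf{B}}_j=\mathbf{B}_j\sqcup\mathbf{A}'_j$ and invokes genericity at size $M+M'\le[n/2]$, dismissing the case $P=Q^{-1}$ as similar. Your write-up merely makes explicit the alternative pairing for the second case and the role of disjointness, which the paper leaves implicit.
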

\begin{proof}
Suppose $$[\mathbf{A}_1]_1\cdots[\mathbf{A}_{k}]_{k}[\mathbf{B}_1]_1^{-1}\cdots[\mathbf{B}_{k}]_{k}^{-1}\ne\left([\mathbf{A}'_1]_1\cdots[\mathbf{A}'_{k}]_{k}[\mathbf{B}'_1]_1^{-1}\cdots[\mathbf{B}'_{k}]_{k}^{-1}\right)^{\epsilon},$$with $\epsilon=1$. For each $j$, put $\tilde{\mathbf{A}}_j=\mathbf{A}_j\sqcup\mathbf{B}'_j$ and $\tilde{\mathbf{B}}_j=\mathbf{B}_j\sqcup\mathbf{A}'_j$. Then,$$[\tilde{\mathbf{A}}_1]_1\cdots[\tilde{\mathbf{A}}_{k}]_{k}[\tilde{\mathbf{B}}_1]_1^{-1}\cdots[\tilde{\mathbf{B}}_{k}]_{k}^{-1}=1,$$which is a contradiction. The case $\epsilon=-1$ is similar.
\end{proof}

\subsection{The $R$-Model}\hfill

As in \cite{Shu3}, we define the character variety over a particular base ring so that when passing to finite fields the conjugacy classes remain generic.
\subsubsection{}\label{R}
The conjugacy classes $(C_j)_j$ are represented by some $n$-tuples of complex numbers $$(a^j_1,\ldots,a^j_1,\ldots,a^j_{l_j},\ldots,a^j_{l_j}),$$satisfying
\begin{itemize}
\item[(i)] $a^j_r\ne a^j_s$ for any $j$ and any $r\ne s$;
\item[(ii)] $[\mathbf{A}_1]_1\cdots[\mathbf{A}_{k}]_{k}[\mathbf{B}_1]_1^{-1}\cdots[\mathbf{B}_{k}]_{k}^{-1}\ne1$,\\  for any $1\le M \le [n/2]$, any $2k$-tuple $(\mathbf{A}_1,\ldots,\mathbf{A}_{k},\mathbf{B}_1,\ldots,\mathbf{B}_{k})$ of subsets of $\Lambda$ such that 
\begin{itemize}
\item
$|\mathbf{A}_1|=\cdots=|\mathbf{A}_{k}|=|\mathbf{B}_1|=\cdots=|\mathbf{B}_{k}|=M$;
\item
$\mathbf{A}_j\cap \mathbf{B}_j=\varnothing$, for all $j$.
\end{itemize}
\end{itemize} 
For each $j$ and each $1\le r\le l_j$, choose $b^j_r\in\mathbb{C}$ such that $(b^j_r)^2=a^j_r$. Denote by $R_0$ the subring of $\mathbb{C}$ generated by $\{(b^j_r)^{\pm 1}\mid\text{all }r,j\}$. Let $S\subset R_0$ be the multiplicative subset generated by
\begin{itemize}
\item[(i)] $a^j_{r}-a^j_{s}$ for any $j$ and $r\ne s$;
\item[(ii)] $[\mathbf{A}_1]_1\cdots[\mathbf{A}_{k}]_{k}[\mathbf{B}_1]_1^{-1}\cdots[\mathbf{B}_{k}]_{k}^{-1}-1$, for any $M$ and $(\mathbf{A}_1,\ldots,\mathbf{A}_{k},\mathbf{B}_1,\ldots,\mathbf{B}_{k})$ as above.
\end{itemize}
Define the ring of generic eigenvalues as $R:=S^{-1}R_0$. We will see below that the character variety is defined over $R$. 

Denote by $\mu_j=(m_{j,r})_{1\le r\le l_j}\in\mathfrak{T}_s$ the type of $C_j$, so that $m_{j,r}$ is the multiplicity of $a^j_r$.

\subsubsection{}\label{A_0}
Let $\mathcal{A}_0$ be the polynomial ring over $R$ with $n^2(2g+k)$ indeterminates thought of as the entries of some $n\times n$ matrices $A_1,B_1,\ldots A_g,B_g,X_1,\ldots X_{k}$, with $\det A_i$, $\det B_i$, $\det X_j$, $1\le i\le g$, $1\le j\le k$, inverted. Let $I_0\subset \mathcal{A}_0$ be the ideal generated by 
\begin{itemize}
\item[(i)] The entries of $A_1\sigma(B_1)A_1^{-1}B_1^{-1}[A_2,B_2]\cdots[A_g,B_g]X_1\cdots X_{k}-\Id$ (Note that $\sigma$ is defined over $\mathbb{Z}$);
\item[(ii)] For all $1\le j\le k$, the entries of $\prod_{r=1}^{l_j}(X_j-a^j_r\Id)$;
\item[(iii)] For all $1\le j\le k$, the entries of the coefficients of the following polynomial in an auxiliary variable $t$:
\begin{equation}
\det(t\Id-X_j)-\prod_r^{l_j}(t-a^j_r)^{m_{j,r}}.
\end{equation}
\end{itemize} 

Define $\mathcal{A}:=\mathcal{A}_0/\sqrt{I_0}$. Then $\sRep_{\mathcal{C}}:=\Spec\mathcal{A}$ is the $R$-model of $\Rep_{\mathcal{C}}$. Let $\GL_n$ act on $B_1$, $A_i$, $B_i$, $2\le i\le g$, and $X_j$, $1\le j\le k$ by conjugation, and act on $A_1$, by the $\sigma$-twisted conjugation. Then $\sCh_{\mathcal{C}}:=\Spec\mathcal{A}^{\GL_n(R)}$ is the $R$-model of $\Ch_{\mathcal{C}}$, since taking invariants commutes with flat base change (\cite[\S I.2 Lemma 2]{Ses77}).

\subsubsection{}
Let $\phi:R\rightarrow \mathbb{F}_q$ be any ring homomorphism. For each $1\le j\le k$, denote by $C_j^{\phi}$ the subvariety of $\GL_n$ over $\mathbb{F}_q$ defined by \S \ref{A_0} (ii), (iii). Denote by $\Rep^{\phi}_{\mathcal{C}}$ (resp. $\Ch_{\mathcal{C}}^{\phi}$) the variety over $\mathbb{F}_q$ obtained by base change from $\sRep_{\mathcal{C}}$ (resp. $\sCh_{\mathcal{C}}$). We have
\begin{equation}\label{RepCFq}
\Rep^{\phi}_{\mathcal{C}}(\mathbb{F}_q)=\{(A_i,B_i)(X_j)\in \GL_n(q)^{2g}\times \prod^{k}_{j=1} C_j^{\phi}(\mathbb{F}_q)\mid A_1\sigma(B_1)A_1^{-1}B_1^{-1}\prod_{i=2}^g[A_i,B_i]\prod_{j=1}^{k}X_j=1\},
\end{equation}
where $C_j^{\phi}(\mathbb{F}_q)$ is a conjugacy class in $\GL_n(q)$ with eigenvalues $\phi(a^j_r)$, $1\le r\le l_j$. By the definition of the base ring $R$, the tuple of conjugacy classes $\mathcal{C}^{\phi}$ is also generic.
\begin{Rem}\label{ajr-in-2}
Our construction of $R$ guarantees that $\phi(a^j_r)\in(\mathbb{F}_q^{\ast})^2$.
\end{Rem}

\begin{Prop}\label{M=U/G}
We have,
\begin{equation}
|\Ch^{\phi}_{\mathcal{C}}(\mathbb{F}_q)|=\frac{1}{|\GL_n(\mathbb{F}_q)|}|\Rep_{\mathcal{C}}^{\phi}(\mathbb{F}_q)|.
\end{equation}
\end{Prop}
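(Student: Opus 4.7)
The plan is to deduce the identity from the freeness of the $\GL_n$-action on $\Rep^{\phi}_{\mathcal{C}}$. Once freeness is established, the quotient map $\Rep^{\phi}_{\mathcal{C}}\to\Ch^{\phi}_{\mathcal{C}}$ is a $\GL_n$-torsor in the \'etale topology; since $\GL_n$ is a special group and $H^1(\mathbb{F}_q,\GL_n)=0$ by Lang--Steinberg, every $\mathbb{F}_q$-point of $\Ch^{\phi}_{\mathcal{C}}$ has a fibre containing exactly $|\GL_n(\mathbb{F}_q)|$ points of $\Rep^{\phi}_{\mathcal{C}}(\mathbb{F}_q)$, and the counting identity follows at once.

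The heart of the proof is therefore the freeness statement. Suppose $g\in\GL_n(\bar{\mathbb{F}_q})$ stabilizes some point $(A_i,B_i,X_j)\in\Rep^{\phi}_{\mathcal{C}}$. Then $g$ centralizes $B_1$, each $A_i$ and $B_i$ for $i\ge 2$, and every $X_j$, while additionally satisfying $gA_1\sigma(g)^{-1}=A_1$. Since strong genericity ensures that at least one $C_j$ is regular semisimple, one may conjugate so that $g$ lies in a maximal torus $T$ simultaneously diagonalising the $X_j$'s. Writing $g=\mathrm{diag}(\lambda_1,\dots,\lambda_n)$, partition $\{1,\dots,n\}$ into blocks according to the distinct values of the $\lambda_i$'s: the centralizer conditions force $B_1$ and $A_i,B_i$ ($i\ge 2$) to preserve this block decomposition, whereas the twisted relation $gA_1=A_1\sigma(g)$ forces $A_1$ to intertwine the blocks via the pairing $\lambda\leftrightarrow\sigma(\lambda)^{-1}$ which, for the standard outer involution of $\GL_n$, is $\lambda\leftrightarrow\lambda^{-1}$.

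Restricting the defining equation (\ref{CharVarEq}) to a union of paired blocks and comparing determinants using the explicit form of $\sigma$ in the $[A_1\sigma,B_1]$-term then produces a multiplicative relation
\begin{equation*}
[\mathbf{A}_1]_1\cdots[\mathbf{A}_k]_k\,[\mathbf{B}_1]_1^{-1}\cdots[\mathbf{B}_k]_k^{-1}=\pm 1,
\end{equation*}
where the sign on the right records the effect of $\sigma$ on the determinant of the pairing of eigenspaces. Strong genericity (\ref{GenConGLn2}) forbids this identity for any proper block, so $g$ must be scalar, and a direct check with the twisted relation $gA_1\sigma(g)^{-1}=A_1$ together with $\sigma|_{Z(\GL_n)}\colon z\mapsto z^{-1}$ then forces $g=1$.

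The main obstacle is organising this combinatorial block argument so that the multiplicative relation extracted from (\ref{CharVarEq}) matches the shape of the strong-genericity hypothesis; this is patterned on \cite[Appendix]{HLR} (for the case without $\sigma$) and on the author's \cite[Proposition 4.3]{Shu3} (for the branched case), but the $\sigma$-twist on $A_1$ is precisely what produces the sign ambiguity and thus forces the use of the $\neq\pm 1$ condition (strong genericity) rather than the ordinary $\neq 1$ condition of \cite{HLR}.
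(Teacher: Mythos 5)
There is a genuine gap, and it sits exactly at the final step of your freeness argument. The $\GL_n$-action on $\Rep^{\phi}_{\mathcal{C}}$ is \emph{not} free: since $\sigma$ acts on the centre by inversion, a central element $z\Id$ acts on $A_1$ by $z\Id\cdot A_1\cdot\sigma(z\Id)^{-1}=z^2A_1$, so $-\Id$ stabilises \emph{every} point of $\Rep^{\phi}_{\mathcal{C}}$. (The paper makes this explicit in Section \ref{Sec-MHP}: for $n=1$ the action on $A_1$ is $g^2A_1$, and for $n=2$ the finite group $\Gamma=\{\pm\Id\}^{2g}$ of residual stabilisers appears.) Your concluding sentence ``forces $g=1$'' is therefore false; the genericity argument can only pin the stabiliser down to $\{\pm\Id\}$, and consequently the quotient map is at best a $\GL_n/\bs\mu_2$-torsor, not a $\GL_n$-torsor, so the mechanism ``special group $+$ Lang $\Rightarrow$ each fibre has $|\GL_n(\mathbb{F}_q)|$ points'' does not apply as stated.

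The proposition is still true, but for a slightly subtler reason, and this is what the proof the paper points to (\cite[Proposition 4.3.3]{Shu3}) actually carries out: once the stabiliser of every point is shown to be exactly $\bs\mu_2=\{\pm\Id\}$ (your block/eigenvalue argument, suitably completed, does give this) and the orbits are shown to be closed so that $\mathbb{F}_q$-points of the categorical quotient correspond to $F$-stable orbits, one counts rational points on a single $F$-stable orbit $O\cong\GL_n/\bs\mu_2$ via the exact sequence $1\to\bs\mu_2\to\GL_n\to\GL_n/\bs\mu_2\to 1$ and Lang's theorem: $|O^F|=|(\GL_n/\bs\mu_2)^F|=|\GL_n^F|\cdot|H^1(F,\bs\mu_2)|/|\bs\mu_2^F|=|\GL_n^F|$, because the two correction factors are both equal to $2$ and cancel. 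Summing over $F$-stable orbits recovers the stated identity. So your answer coincides with the correct one only because of this cancellation; as written, the proof both asserts a false freeness statement and omits the $H^1(F,\bs\mu_2)$ bookkeeping that makes the division by the full $|\GL_n(\mathbb{F}_q)|$ legitimate.
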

\begin{proof}
The same as \cite[Proposition 4.3.3]{Shu3}.
\end{proof}

\section{Two Lemmas}\label{Sec-Lin}
In this section, we denote by $T$ the maximal torus consisting of diagonal matrices and denote by $W$ the Weyl group of $\GL_n$ defined by $T$. 
\subsection{A Constraint on Types}
\subsubsection{}\label{sss-1st-lemma}
We show that only a particular subset of the set of $\sigma$-stable irreducible characters of $\GL_n(q)$ contributes to the E-polynomial.
\begin{Lem}\label{restrictTypes}
Let $(C_1,\ldots,C_{k})$ be a tuple of semi-simple conjugacy classes in $\GL_n(q)$ such that each $C_j$ has a representative $s_j\in T^{F}$. Let $M$ be a $\sigma$-stable and $F$-stable Levi factor of a $\sigma$-stable parabolic subgroup of $\GL_n(\kk)$. Suppose that $(C_j)_j$ is strongly generic and that for all $j$, we have $C_j\cap M\ne\varnothing$. Then $M$ is $\GL_n(q)$-conjugate to a $\sigma$-stable standard Levi subgroup.
\end{Lem}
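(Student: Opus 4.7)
The strategy is to exploit the classification of $\sigma$-stable $F$-stable Levi factors of $\sigma$-stable parabolics recalled in \S \ref{GLnSigma-F-Levi}. Under those hypotheses, $M$ is $G^F$-conjugate to some $L_{I,w}$, where $I\subset\Delta$ is a $\sigma$-stable subset (so that $L_I\cong\GL_{n_0}\times\prod_i(\GL_{n_i}\times\GL_{n_i})$ is $\sigma$-stable) and $w\in W_I^{\sigma}$. The goal is to show that $w$ may be taken to be trivial, i.e.\ that $M$ is $\GL_n(q)$-conjugate to the standard Levi $L_I$ itself.

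The first step is cosmetic: replace $M$ by a $G^F$-conjugate so that $M=L_{I,w}$. The commutative diagram \eqref{F_w} then furnishes an isomorphism $M^F\cong L_I^{F_w}$, under which the given elements $s_j\in C_j\cap M^F$ correspond to semi-simple elements $\tilde s_j\in L_I^{F_w}$ whose eigenvalue multisets (viewed inside $\GL_n$) coincide with those of $s_j$. In particular, every eigenvalue of every $\tilde s_j$ lies in $\mathbb{F}_q^{\ast}$, since $s_j\in T^F$.

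Next, I would decompose the action of $F_w=\ad\dot w\circ F$ on the set of simple factors of $L_I$ into cycles. An $F_w$-cycle of length $d$ through factors isomorphic to $\GL_m$ contributes to $L_I^{F_w}$ a subgroup isomorphic to $\GL_m(\mathbb{F}_{q^d})$, and the eigenvalues of its elements, read inside $\GL_n$, occur in $F$-orbits of size dividing $d$. Since all eigenvalues of $\tilde s_j$ are already $F$-fixed, a $d$-cycle forces each eigenvalue produced by this piece to appear with multiplicity a multiple of $d$ in the eigenvalue multiset of $\tilde s_j$, \emph{simultaneously for every $j$} (because every $C_j$ meets $M$). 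Because $w\in W_I^{\sigma}$, these cycles are also constrained to be compatible with the $\sigma$-swap that pairs the $\GL_{n_i}\times\GL_{n_i}$ factors.

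The decisive step is to translate a non-trivial $w$ into a violation of strong genericity. If some $F_w$-cycle has length $d\geq 2$, I would pick, for each $j$, the eigenvalues of $s_j$ corresponding to this cycle, split them into two groups of equal size according to the $F_w$-action (or, in the $\sigma$-swap situation, according to the $\sigma$-pairing), and call the resulting index subsets $\mathbf{A}_j,\mathbf{B}_j\subset\Lambda$. The constraint $M=L_{I,w}$ with $w\in W_I^{\sigma}$ and $\tilde s_j\in L_I^{F_w}$ then forces the product
\[
[\mathbf{A}_1]_1\cdots[\mathbf{A}_k]_k[\mathbf{B}_1]_1^{-1}\cdots[\mathbf{B}_k]_k^{-1}
\]
to equal $+1$ when the cycle lies entirely within the $\prod_i(\GL_{n_i}\times\GL_{n_i})$-part or within $\GL_{n_0}$ without a sign twist, and $-1$ when it involves the component where $\sigma$ acts by the outer automorphism (which in $\GL_n\langle\sigma\rangle$ is built from $g\mapsto g^{-T}$ and introduces a determinantal sign through the $\eta$-twist of \S \ref{sss-sigst}). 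Either way, strong genericity is contradicted. Hence $w$ must be trivial up to conjugacy in $W_I$, and $M$ is $G^F$-conjugate to the standard Levi $L_I$.

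\textbf{Main obstacle.} The delicate point is the combinatorial bookkeeping in the final step: exhibiting the subsets $\mathbf{A}_j,\mathbf{B}_j$ witnessing the violation of strong genericity, and keeping track of the sign $\pm 1$ that arises from the interaction of $F_w$-cycles with the $\sigma$-swap structure of $L_I$. In particular, one must distinguish cycles of $F_w$ acting within a single $\GL_{n_i}$ factor from those permuting the $\GL_{n_i}\times\GL_{n_i}$ pairs or involving the $\GL_{n_0}$ component, and verify that in every non-trivial case, the resulting eigenvalue product is forced to be $\pm 1$. This is precisely where the strengthening from generic to \emph{strongly} generic is needed, and where the assumption that $M$ is a Levi of a $\sigma$-stable parabolic (and not merely a $\sigma$-stable Levi) plays a role, since it restricts $w$ to lie in $W_I^{\sigma}$.
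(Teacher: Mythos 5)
Your opening moves coincide with the paper's: reduce to $M=L_{I,w}$ with $I$ $\sigma$-stable and $w\in W_I^{\sigma}$, and argue that a non-trivial signed cycle of $w$ contradicts (strong) genericity. But the step you yourself flag as the ``main obstacle'' is exactly the step you have not supplied, and the way you propose to resolve it points in the wrong direction. The sign $-1$ has nothing to do with the $\GL_{n_0}$ component or with the $\eta$-twist of \S\ref{sss-sigst}: since $W_I^{\sigma}\cong\prod_r\mathfrak{W}_{N'_r}$ permutes only the $\sigma$-paired factors $\GL_{n_i}\times\GL_{n_i}$, no cycle of $w$ ever touches the factor on which $\sigma$ acts by the outer automorphism, and $\eta$ is a datum attached to linear characters of $M_{00}^F$, irrelevant to the structure of the Levi itself. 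So the case you propose as the source of the $-1$ does not occur, and you have not exhibited any mechanism that actually produces a $-1$.

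Here is how the paper closes the argument. It applies the abelianization $\mathbf{D}_{L_I}:L_I\to Z_{L_I}^{\circ}/(Z_{L_I}^{\circ}\cap[L_I,L_I])\cong\kk^{\ast}\times\prod_i(\kk^{\ast}\times\kk^{\ast})$ to the single element $\prod_jg^{-1}t_j\sigma(t_j)g$. The resulting element $l$ is $\sigma$-fixed (read off from the right-hand side of (\ref{l})), hence of the form $(l_0,(l_i,l_i^{-1})_i)$ with each $l_i$ a product $[\mathbf{A}_1]_1\cdots[\mathbf{A}_k]_k[\mathbf{B}_1]_1^{-1}\cdots[\mathbf{B}_k]_k^{-1}$ of the generic shape, and it is also $F_w$-stable (read off from the left-hand side). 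A signed cycle of length $\geq 2$ then forces $l_i=l_{i'}^{\pm1}$ for some $i\ne i'$, which Lemma \ref{disct-e.v.-genconj} forbids; a negative $1$-cycle forces $l_i=l_i^{-q}=l_i^{-1}$, i.e.\ $l_i=\pm1$, which strong genericity (\ref{GenConGLn2}) forbids. That is the true origin of the $\pm1$: tracking only block determinants cannot separate $l_i=1$ from $l_i=-1$. Your finer route through eigenvalue multisets is in fact viable and would avoid the sign issue altogether: since all eigenvalues of the $s_j$ lie in $\mathbb{F}_q^{\ast}$, the blocks linked by any $F_w$-cycle carry equal eigenvalue multisets, hence equal determinants, so the witnessing product always equals $+1$ and plain genericity already gives the contradiction. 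To count as a proof, though, you would need to actually write down the sets $\mathbf{A}_j,\mathbf{B}_j$ (one per block in a chosen adjacent pair of the cycle, of common size $n_i\leq[n/2]$, disjoint because distinct blocks partition the eigenvalue slots) and verify the multiset equality; as it stands the decisive step is asserted rather than proved, and the asserted case division is not the correct one.
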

\begin{proof}
We may assume that $M=L_{I,w}$ for some $\sigma$-stable $I$ and some $w\in W_I^{\sigma}$ (see \S \ref{GLnSigma-F-Levi}). There exists $\dot{w}\in(G^{\sigma})^{\circ}$ representing $w$ and $g\in(G^{\sigma})^{\circ}$ such that $g^{-1}F(g)=\dot{w}$. Then $L_{I,w}=gL_Ig^{-1}$.

For each $j$, let $t_j\in C_j\cap M$. For any $j$, $g^{-1}t_jg$ lies in $L_I$. For each $j$, there exists $l_j\in L_I$ such that $l_jg^{-1}t_jgl_j^{-1}$ lies in $T$, and moreover, for each $j$, there exists $w_j\in W$ such that $$l_jg^{-1}t_jgl_j^{-1}=w_js_jw_j^{-1}.$$For any connected reductive algebraic group $H$, denote by 
$$\mathbf{D}_H:H\longrightarrow H/[H,H]\cong Z_H^{\circ}/(Z_H^{\circ}\cap[H,H])$$
the natural surjection. We have  $$\mathbf{D}_{L_I}\left((l_jg^{-1}t_jgl_j^{-1})\sigma(l_jg^{-1}t_jgl_j^{-1})\right)=\mathbf{D}_{L_I}(g^{-1}t_j\sigma(t_j) g).$$ We deduce that 
\begin{equation}\label{l}
\mathbf{D}_{L_I}\big(\prod_{j=1}^{2k}g^{-1}t_j\sigma(t_j) g\big)=\mathbf{D}_{L_I}\big(\prod_{j=1}^{2k}(w_js_jw_j^{-1})\sigma(w_js_jw_j^{-1})\big),
\end{equation}
and we denote by $l$ this element of $Z_{L_I}^{\circ}/(Z_{L_I}^{\circ}\cap[L_I,L_I])$. 

We may choose an isomorphism $L_I\cong \GL_{n_0}\times\prod_i(\GL_{n_i}\times\GL_{n_i})$, so that $$Z_{L_I}^{\circ}/(Z_{L_I}^{\circ}\cap[L_I,L_I])\cong \kk^{\ast}\times \prod_{i}(\kk^{\ast}\times \kk^{\ast}).$$ The action of $\sigma$ on the factor $\kk^{\ast}\times\kk^{\ast}$ corresponding to each $i$ sends $(x,y)$ to $(y^{-1},x^{-1})$, and on the first factor sends  $x$ to $x^{-1}$. From the right hand side of (\ref{l}), we see that $l$ is a fixed point of $\sigma$. We can then write $l=(l_0,(l_i,l_i^{-1}))$ under the above isomorphism.

In view of the right hand side of (\ref{l}), each $l_i$ is of the form $$[\mathbf{A}_1]_1\cdots[\mathbf{A}_{k}]_{k}[\mathbf{B}_1]_1^{-1}\cdots[\mathbf{B}_{k}]_{k}^{-1}$$ (See \S \ref{GCC} for the notations). We can then apply Lemma \ref{disct-e.v.-genconj} and conclude that $l_i\ne l_j^{\pm 1}$ whenever $i\ne j$. Moreover, all $l_i$ lie in $\mathbb{F}_q$. However, the left hand side of (\ref{l}) shows that $l$ is an $F_w$-stable element (see (\ref{F_w})). According to \cite[\S 3.2.2]{Shu3}, $W_I^{\sigma}$ is of the form $\prod_r\mathfrak{W}_{N'_r}$ for some positive integers $N'_r$, so $w$ consists of some signed cycles. It is easy to see that if there is some signed cycle in $w$ that has size larger than $1$, then $l_i=l_j$ for some $i\ne j$, which is a contradiction. Finally, if there is some negative cycle of size $1$, then the corresponding factor $l_i$ satisfies $l_i=l_i^q=l_i^{-1}$ and so must be equal to $\pm 1$. This is impossible due to the strongly generic condition (\ref{GenConGLn2}). We conclude that $w=1$.
\end{proof}
Let $\chi\in\Irr(\GL_n(q))^{\sigma}$ and let $\mathcal{C}$ be as in the above lemma. Suppose that $\chi\notin\Irr^{\sigma}_{st}$, and that it is obtained from a Levi subgroup $M$ as in \S \ref{sss-sigst}. By the above lemma, there exists some $C_j$ such that $C_j\cap M=\varnothing$. The definition of $R^G_{\varphi}\theta$ and (\ref{eq-char-formula}) imply that $\chi$ must vanish on $C_j$.

\subsection{Sum of linear characters}
\subsubsection{}\label{Irr-reg-sig}
Let $M=L_I$ be a $\sigma$-stable standard Levi subgroup. As in \S \ref{sss-sigst}, we write $M=M_0\times M_1$, where $M_0=\GL_{n_0}$ and $M_1=\prod_{i=1}^l(\GL_{n_i}\times\GL_{n_i})$ for some integers $l$, $n_0$, and $n_i$. Let $n_+$ and $n_-$ be some non negative integers such that $n_++n_-=n_0$, and let $M_{00}\cong\GL_{n_+}\times\GL_{n_-}$ be a Levi subgroup of $M_0$ containing $M_0\cap T$. Denote by $W_{00}$ (resp. $W_1$) the Weyl group of $M_{00}$ (resp. $M_1$) defined by $T$. For each $j\in\{1,\ldots k\}$, let $$w_j\in W_{00}\times W_1\subset W_M(T),$$and choose an $F$-stable maximal torus $T_{w_j}\subset M_{00}\times M_1$ corresponding to the conjugacy class of $w_j$. Recall that $\Irr_{\reg}^{\sigma}(M_1^F)$ is the set of $\sigma$-stable regular linear characters of $M_1^F$ defined in \S \ref{sss-reg-lin}. An element $\theta\in\Irr_{\reg}^{\sigma}(M_1^F)$ determines a linear character $\mathbf{1}\boxtimes\eta\boxtimes\theta$ of $M_{00}^F\times M_1^F$ as in \S \ref{sss-sigst}. For each $j$, we denote by $\theta_j$ the restriction of this linear character to $T_{w_j}^F$. 

Let $\mathcal{C}=(C_1,\ldots,C_{k})$ be a tuple of semi-simple conjugacy classes in $\GL_n(q)$ with representatives $s_j\in T^F$. For each $j$, let $h_j\in\GL_n(q)$ be such that $h_js_jh_j^{-1}\in T_{w_j}$.

\begin{Lem}\label{sumofreg}
Suppose that $\mathcal{C}$ is strongly generic. We have:
\begin{equation}
\sum_{\theta\in\Irr_{\reg}^{\sigma}(M_1^F)}\prod^{k}_{j=1}\theta_j(h_js_j h_j^{-1})=(-2)^ll!.
\end{equation}
\end{Lem}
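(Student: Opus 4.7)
The plan is to reduce the sum to a character sum over linear characters of $\mathbb{F}_q^{\ast}$ and then evaluate it by induction on $l$ using orthogonality.

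By \S\ref{sss-reg-lin}, an element $\theta\in\Irr_{\reg}^{\sigma}(M_1^F)$ corresponds to a tuple $(\theta_1,\ldots,\theta_l)$ of linear characters of $\GL_{n_i}(q)$ (equivalently, of $\mathbb{F}_q^{\ast}$ via the determinant) with $\theta_i^2\ne\mathbf{1}$ and $\theta_i\ne\theta_j^{\pm 1}$ for $i\ne j$. The character $\mathbf{1}\boxtimes\eta\boxtimes\theta$ on $M_{00}^F\times M_1^F$ factors through the block determinants, so
$$\theta_j(h_js_jh_j^{-1})=\eta(d_{j,-})\prod_{i=1}^{l}\theta_i(c_{j,i}^{+}/c_{j,i}^{-}),$$
where $d_{j,-}$ and $c_{j,i}^{\pm}$ are the determinants of the corresponding blocks of $h_js_jh_j^{-1}$. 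Since $h_js_jh_j^{-1}$ and $s_j$ are conjugate in $\GL_n$, these are products of eigenvalues of $s_j$ indexed by pairwise disjoint subsets of $\{1,\ldots,n\}$. By Remark~\ref{ajr-in-2}, every eigenvalue of $s_j$ is a square in $\mathbb{F}_q^{\ast}$, so $\eta(d_{j,-})=1$, and setting $e_i:=\prod_{j=1}^{k}c_{j,i}^{+}/c_{j,i}^{-}$ reduces the desired sum to
$$S(l):=\sum_{(\theta_1,\ldots,\theta_l)}^{\mathrm{reg}}\prod_{i=1}^{l}\theta_i(e_i),$$
which I shall show equals $(-2)^l l!$.

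For any nonempty $A\subset\{1,\ldots,l\}$ and signs $\epsilon_i\in\{\pm 1\}$, the product $\prod_{i\in A}e_i^{\epsilon_i}$ has the form $\prod_j[\mathbf{A}_j]_j[\mathbf{B}_j]_j^{-1}$ with $|\mathbf{A}_j|=|\mathbf{B}_j|=\sum_{i\in A}n_i\le[n/2]$ and $\mathbf{A}_j\cap\mathbf{B}_j=\varnothing$, where the disjointness is inherited from the fixed block partition of $\{1,\ldots,n\}$. The strong genericity of $\mathcal{C}$ (\S\ref{explicit-genconj-Fq}) then gives $\prod_{i\in A}e_i^{\epsilon_i}\ne\pm 1$, and each $e_i$ is moreover a square.

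Proceed by induction on $l$; the base $l=0$ is immediate. For the inductive step, fix a regular $(\theta_1,\ldots,\theta_{l-1})$ and sum over $\theta_l$. By regularity of $\theta_1,\ldots,\theta_{l-1}$, the forbidden events $\{\theta_l^2=\mathbf{1}\}$ and $\{\theta_l=\theta_j^{\pm 1}\}$ for $j<l$ are pairwise disjoint, so inclusion-exclusion collapses to a single subtraction. Orthogonality gives $\sum_{\theta_l}\theta_l(e_l)=0$ (as $e_l\ne 1$) and $\sum_{\theta_l^2=\mathbf{1}}\theta_l(e_l)=1+\eta(e_l)=2$ (as $e_l$ is a square), so the inner sum equals $-2-\sum_{j<l}\bigl(\theta_j(e_l)+\theta_j^{-1}(e_l)\bigr)$. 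Summing over regular $(\theta_1,\ldots,\theta_{l-1})$ yields
$$S(l)=-2\,S(l-1)-\sum_{j<l}\bigl(S_j^{+}(l-1)+S_j^{-}(l-1)\bigr),$$
where $S_j^{\pm}(l-1)$ is the analogous $(l-1)$-fold sum with $e_j$ replaced by $e_j e_l^{\pm 1}$.

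The main point to verify is that the induction hypothesis applies to the modified tuple. Any product $\prod_{i\in A}\tilde{e}_i^{\delta_i}$ with $j\in A$ equals $\prod_{i\in A\cup\{l\}}e_i^{\delta_i'}$ for suitable signs, and $A\cup\{l\}\subset\{1,\ldots,l\}$ continues to satisfy the disjointness/size condition from the second paragraph, so strong genericity still gives $\ne\pm 1$ and the $e_i e_l^{\pm 1}$ remains a square. By induction $S_j^{\pm}(l-1)=(-2)^{l-1}(l-1)!$, and substituting yields
$$S(l)=(-2)^l(l-1)!+(l-1)(-2)^l(l-1)!=l\cdot(-2)^l(l-1)!=(-2)^l l!,$$
completing the proof.
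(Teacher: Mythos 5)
Your proof is correct, and the first half (reducing $\prod_j\theta_j(h_js_jh_j^{-1})$ to $\prod_{r}\theta_r(e_r)$ with each $e_r$ a square in $\mathbb{F}_q^{\ast}$, and using strong genericity plus the disjointness of the index sets to guarantee that every signed product $\prod_{i\in A}e_i^{\epsilon_i}$, $A\ne\varnothing$, avoids $\pm1$) is essentially identical to the paper's reduction. Where you diverge is the final evaluation of $\sum_{(\theta_1,\ldots,\theta_l)\,\mathrm{reg}}\prod_r\theta_r(e_r)$: the paper outsources this to a M\"obius-function identity over a partition poset proved in \cite{Shu3} (the sum $\sum_{P_1\prec P_0}\mu(P_1,P_0)(-1)^{l(P_1)}2^l=(-2)^ll!$), whereas you give a self-contained induction on $l$, peeling off $\theta_l$ and using orthogonality together with the observation that, once $(\theta_1,\ldots,\theta_{l-1})$ is regular, the forbidden events $\{\theta_l^2=\mathbf{1}\}$ and $\{\theta_l=\theta_j^{\pm1}\}$ are pairwise disjoint, so inclusion--exclusion degenerates to a single subtraction. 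The one point that genuinely needs checking in your route --- that the modified tuples with $e_j$ replaced by $e_je_l^{\pm1}$ still satisfy the genericity hypothesis, so the induction applies to $S_j^{\pm}(l-1)$ --- you address correctly by absorbing the modification into a signed product over $A\cup\{l\}$. Your argument buys independence from the external combinatorial lemma and makes transparent exactly which instances of strong genericity are consumed (all signed products over nonempty subsets, not just singletons); the paper's route is shorter on the page and reuses machinery already set up for the branched case.
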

\begin{proof}
We first give an expression for $\theta_j(h_js_j h_j^{-1})$. Let $g_j\in\GL_n$ be such that $g_jTg_j^{-1}=T_{w_j}$. Write $x=g_j^{-1}h_j$. Then $xs_jx^{-1}\in T$. We deduce that $x=nl$ with $n$ normalising $T$ and $l$ centralising $s_j$. Write $$ns_jn^{-1}=\diag(a_{j,1},\ldots,a_{j,n})\in T^F.$$ Note that this element also lies in $T^{F_{w_j}}$ since $xs_jx^{-1}$ necessarily lies in $T^{F_{w_j}}$. We write $w_j$ as a permutation $\prod_{i\in\Lambda_j}c_{I_{j,i}}$ of $\{1,\ldots,n\}$, where $I_{j,i}$'s are disjoint subsets of $\{1,\ldots,n\}$ indexed by $\Lambda_j$, and $c_{I_{j,i}}$ is a cyclic permutation of $I_{j,i}$. The character $\theta_j\circ\ad g_j$ of $T^{F_{w_j}}$ can then be written as $(\alpha_{j,i})_{i\in\Lambda_j}$, where each $\alpha_{j,i}$ is a character of $\mathbb{F}_q^{\ast}$. With these notations, we have $$\theta(h_js_jh_j^{-1})=\prod_{i\in\Lambda_j}(\alpha_{j,i}(\prod_{\gamma\in I_{j,i}}a_{j,\gamma})).$$(By Remark \ref{ajr-in-2}, the component $\eta$ of $\theta$ has no contribution to this value.)

Let $\theta$ be represented by a tuple $(\theta_1,\ldots,\theta_l)$ of distinct characters of $\mathbb{F}_q^{\ast}$, where $l$ is the number of factors $\GL_{n_i}\times\GL_{n_i}$ in $M_1$. For each $r\in\{1,\ldots,l\}$ and each $j$, put $$I^+_{j,r}:=\bigsqcup_{\{i\in\Lambda_{j}\mid \alpha_{j,i}=\theta_r\}}I_{j,i},\quad I^-_{j,r}:=\bigsqcup_{\{i\in\Lambda_{j}\mid \alpha_{j,i}=\theta_r^{-1}\}}I_{j,i},$$and for each $r$, $$a_r:=\prod_{j=1}^{k}\left(\prod_{\gamma\in I^+_{j,r}}a_{j,\gamma}\prod_{\gamma\in I^-_{j,r}}a^{-1}_{j,\gamma}\right).$$ Then, $$\prod_{j=1}^{k}\prod_{i\in\Lambda_{j}}\alpha_{j,i}(\prod_{\gamma\in I_{j,i}}a_{j,\gamma})=\prod_{r=1}^{l}\theta_r(a_r).$$ 
Therefore, 
$$
\sum_{\theta\in\Irr^{\sigma}_{\reg}(M_1^F)}\prod^{k}_{j=1}\theta_j(h_js_jh_j^{-1})=\sum_{\substack{(\theta_1,\ldots,\theta_l)\\\text{regular}}}\prod^l_{r=1}\theta_r(a_r).
$$ 

Remark \ref{ajr-in-2} says that each $a_r$ is in fact a square in $\mathbb{F}_q^{\ast}$. Therefore we can apply \cite[Lemma 5.2.2, Equation (5.2.1.2), Equation (5.2.2.1)]{Shu3} and obtain$$
\sum_{\substack{(\theta_1,\ldots,\theta_l)\\\text{regular}}}\prod^l_{r=1}\theta_r(a_r)=\sum_{P_1\prec P_0}\mu(P_1,P_0)(-1)^{l(P_1)}2^l=(-2)^ll!.$$
\end{proof}

\section{Computation of E-polynomials}\label{Section-E-poly}  
The goal of this section is to prove Theorem \ref{Main-Thm}.

\subsection{Symmetric functions associated to types}

\subsubsection{}
For any $\bs\omega=\omega_+\omega_-(\omega_i)_i\in\mathfrak{T}$, define the associated Schur symmetric function by $$s_{\bs\omega}(\Z):=s_{\omega_+}(\Z)s_{\omega_-}(\Z)\prod_is_{\omega_i}(\Z).$$Then $p_{\bs\omega}(\Z)$, $m_{\bs\omega}(\Z)$, $h_{\bs\omega}(\Z)$, $P_{\bs\omega}(\Z)$ and $\tilde{H}_{\bs\omega}(\Z)$ can be defined similary. Define $z_{\bs\omega}:=z_{\omega_+}z_{\omega_-}\prod_iz_{\omega_i}$ and $n(\bs\omega):=n(\omega_+)+n(\omega_-)+\sum_i(\omega_i)$. For any $\bs\alpha$, $\bs\beta\in\tilde{\mathfrak{T}}$, if $\bs\alpha\thickapprox\bs\beta$, we define $\chi^{\bs\alpha}_{\bs\beta}:=\chi^{\alpha_+}_{\beta_+}\chi^{\alpha_-}_{\beta_-}\prod_i\chi^{\alpha_i}_{\beta_i}$; otherwise we put $\chi^{\bs\alpha}_{\bs\beta}=0$. If $\bs\alpha\thickapprox\bs\beta$, we define $$K_{\bs\beta,\bs\alpha}(q):=K_{\beta_+,\alpha_+}(q)K_{\beta_-,\alpha_-}(q)\prod_iK_{\beta_i,\alpha_i}(q);$$otherwise, we define $K_{\bs\beta,\bs\alpha}(q)=0$. Similarly we can define $\tilde{K}_{\bs\beta,\bs\alpha}(q)$, so we have $\tilde{K}_{\bs\beta,\bs\alpha}(q)=q^{n(\bs\alpha)}K_{\bs\beta,\bs\alpha}(q^{-1})$.

\begin{Lem}\label{XoXlBCType}
Let $\bs\mu\in\mathfrak{T}_s$. Then the following identity holds:
$$h_{\bs\mu^{\ast}}[\frac{\Z}{1-q}]=(-1)^{|\bs\mu|}q^{-n(\bs\mu)-|\bs\mu|}b_{\bs\mu}(q^{-1})^{-1}\tilde{H}_{\bs\mu}(\Z;q).$$
\end{Lem}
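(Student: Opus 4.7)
The first move is to exploit the multiplicativity of both sides over the parts of $\bs\mu\in\mathfrak{T}_s$. Each of $h_{\bs\mu^{\ast}}$, $\tilde{H}_{\bs\mu}$, $b_{\bs\mu}$, $|\bs\mu|$, and $n(\bs\mu)$ factors across the components of $\bs\mu$, and every component is of the form $(1^m)$ with dual $(m)$, so the claim reduces to the single-column identity
$$h_n[\Z/(1-q)] = (-1)^n q^{-\binom{n}{2}-n}\,b_{(1^n)}(q^{-1})^{-1}\,\tilde{H}_{(1^n)}(\Z;q).$$
A short computation with $b_{(1^n)}(q^{-1}) = \prod_{j=1}^n(1-q^{-j}) = (-1)^n q^{-n(n+1)/2}(q;q)_n$ collapses the scalar prefactor to $1/(q;q)_n$, so it suffices to prove the clean equivalent form
$$(q;q)_n\,h_n[\Z/(1-q)] = \tilde{H}_{(1^n)}(\Z;q).$$

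The plan is then to expand both sides in the Schur basis. For the left-hand side I would plethystically substitute $\Y\mapsto y/(1-q)$ into the Cauchy identity $\sum_\lambda s_\lambda(\Z)s_\lambda(\Y) = \prod_{i,j}(1-z_iy_j)^{-1}$ and use the principal specialization $s_\lambda[y/(1-q)] = y^{|\lambda|}q^{n(\lambda)}/\prod_{s\in\lambda}(1-q^{h(s)})$ (Macdonald, I.3 Ex.~1); extracting the coefficient of $y^n$ gives
$$h_n[\Z/(1-q)] = \sum_{\lambda\vdash n}\frac{q^{n(\lambda)}}{\prod_{s\in\lambda}(1-q^{h(s)})}\,s_\lambda(\Z).$$
For the right-hand side the definition directly yields $\tilde{H}_{(1^n)}(\Z;q) = \sum_{\lambda\vdash n}\tilde{K}_{\lambda,(1^n)}(q)\,s_\lambda(\Z)$. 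Matching coefficients of $s_\lambda$ reduces the whole identity to the closed-form evaluation
$$\tilde{K}_{\lambda,(1^n)}(q) = \frac{(q;q)_n\,q^{n(\lambda)}}{\prod_{s\in\lambda}(1-q^{h(s)})}.$$

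Via $\tilde{K}_{\lambda,(1^n)}(q) = q^{\binom{n}{2}}K_{\lambda,(1^n)}(q^{-1})$ together with the classical identity $\sum_{s\in\lambda}h(s) = |\lambda|+n(\lambda)+n(\lambda')$, this last evaluation is equivalent to the $q$-hook length formula $K_{\lambda,(1^n)}(q) = q^{n(\lambda')}(q;q)_n/\prod_{s\in\lambda}(1-q^{h(s)})$ (Macdonald, III.6 Ex.~2). An entirely parallel route, which I could use as a cross-check, is to expand both sides in the power-sum basis and reduce to the Green polynomial evaluation $\mathcal{Q}^{(1^n)}_\nu(q) = (q;q)_n/\prod_i(1-q^{\nu_i})$. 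The only step that is not pure bookkeeping is the invocation of the principal specialization (equivalently the $q$-hook length formula); once that is accepted, the factorization set up in the first paragraph does all the remaining work, so I do not expect a real obstacle.
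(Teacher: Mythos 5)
Your proposal is correct, but it is worth noting that the paper does not actually prove this lemma at all: its ``proof'' is the single line ``See [HLR, Lemma 2.3.6],'' deferring entirely to Hausel--Letellier--Rodriguez-Villegas. You have therefore supplied a genuine self-contained derivation where the paper only cites. Your argument checks out at every stage: the reduction to a single column is legitimate because every symbol in the identity ($h_{\bs\mu^{\ast}}$, $\tilde{H}_{\bs\mu}$, $b_{\bs\mu}$, $q^{n(\bs\mu)}$, $(-1)^{|\bs\mu|}$) is defined multiplicatively over the components of the type and plethysm is a ring homomorphism; the scalar bookkeeping with $b_{(1^n)}(q^{-1})=(-1)^nq^{-n(n+1)/2}\prod_{j=1}^n(1-q^j)$ does collapse the prefactor to $\prod_{j=1}^n(1-q^j)^{-1}$, since $\binom{n}{2}+n=n(n+1)/2$; and the resulting clean form $\bigl(\prod_{j=1}^n(1-q^j)\bigr)h_n[\Z/(1-q)]=\tilde{H}_{(1^n)}(\Z;q)$ reduces, via the principal specialisation of $s_\lambda$ and the paper's definition $\tilde{K}_{\lambda,(1^n)}(q)=q^{\binom{n}{2}}K_{\lambda,(1^n)}(q^{-1})$ together with $\sum_{x\in\lambda}h(x)=|\lambda|+n(\lambda)+n(\lambda^{\ast})$, exactly to the charge/hook evaluation $K_{\lambda,(1^n)}(q)=q^{n(\lambda^{\ast})}\prod_{j=1}^n(1-q^j)/\prod_{x\in\lambda}(1-q^{h(x)})$ from Macdonald. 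I verified the exponent of $q$ lands on $n(\lambda)$ as required. The only inputs you take on faith are the two classical facts you name explicitly, which is entirely reasonable; the power-sum/Green-polynomial cross-check you mention is in fact closer in spirit to how [HLR] organises the surrounding material, so either route would be acceptable here.
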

\begin{proof}
See \cite[Lemma 2.3.6]{HLR}.
\end{proof}
\begin{Lem}\label{<s,H>}
Let $\bs\alpha\in\tilde{\mathfrak{T}}$ and $\bs\beta\in\tilde{\mathfrak{T}}_s$. Then
$$
\langle s_{\bs\alpha}(\X),\tilde{H}_{\bs\beta}(\X,q)\rangle=\sum_{\bs\tau\in\tilde{\mathfrak{T}}}\frac{z_{[\bs\tau]}\chi^{\bs\alpha}_{\bs\tau}}{z_{\bs\tau}}\sum_{\substack{\bs\nu\in\tilde{\mathfrak{T}}\\ [\bs\nu]=[\bs\tau] }}\frac{\mathcal{Q}^{\bs\beta}_{\bs\nu}(q)}{z_{\bs\nu}}.
$$
\end{Lem}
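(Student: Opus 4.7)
The proof will be a direct computation expanding both sides in the power sum basis and applying the orthogonality relation $\langle p_\lambda(\X),p_\mu(\X)\rangle=z_\lambda\delta_{\lambda,\mu}$. The conceptual point is that $s_{\bs\alpha}(\X)$ and $\tilde H_{\bs\beta}(\X,q)$ are each products (over the components of the type) of symmetric functions in the \emph{same} variable set $\X$, so expanding each factor into power sums and then using the multiplicativity $p_{\lambda}(\X)p_{\mu}(\X)=p_{\lambda\cup\mu}(\X)$ collapses the tuple-indexed power sums $p_{\bs\tau}(\X)$ into the single-partition power sum $p_{[\bs\tau]}(\X)$. This is the mechanism that produces the two nested sums in the statement.

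Concretely, I would proceed as follows. First, for each component of $\bs\alpha$, apply the standard expansion $s_{\lambda}(\X)=\sum_{\rho}z_{\rho}^{-1}\chi^{\lambda}_{\rho}p_{\rho}(\X)$ and multiply; using the convention $\chi^{\bs\alpha}_{\bs\tau}=0$ unless $\bs\tau\thickapprox\bs\alpha$ and the identity $p_{\bs\tau}(\X)=p_{[\bs\tau]}(\X)$, this gives
\begin{equation*}
s_{\bs\alpha}(\X)=\sum_{\bs\tau\in\tilde{\mathfrak T}}\frac{\chi^{\bs\alpha}_{\bs\tau}}{z_{\bs\tau}}\,p_{[\bs\tau]}(\X).
\end{equation*}
Second, combining the definitions of $\tilde H_\lambda$ (in terms of $\tilde K_{\tau,\lambda}$ and $s_\tau$), the power-sum expansion of $s_\tau$, and the definition $\mathcal Q^{\lambda}_{\nu}(q)=\sum_{\tau}\chi^{\tau}_{\nu}\tilde K_{\tau,\lambda}(q)$, one obtains for a single partition $\tilde H_\lambda(\X;q)=\sum_\nu z_\nu^{-1}\mathcal Q^{\lambda}_{\nu}(q)\,p_\nu(\X)$. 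Taking a product over the components of $\bs\beta$ yields
\begin{equation*}
\tilde H_{\bs\beta}(\X;q)=\sum_{\bs\nu\in\tilde{\mathfrak T}}\frac{\mathcal Q^{\bs\beta}_{\bs\nu}(q)}{z_{\bs\nu}}\,p_{[\bs\nu]}(\X),
\end{equation*}
where the $\thickapprox$-incompatible terms vanish by the convention on $\tilde K_{\bs\nu,\bs\beta}$.

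Third, insert these expansions into the Hall inner product and use $\langle p_{[\bs\tau]}(\X),p_{[\bs\nu]}(\X)\rangle=z_{[\bs\tau]}\delta_{[\bs\tau],[\bs\nu]}$. The Kronecker delta forces $[\bs\nu]=[\bs\tau]$, which gives
\begin{equation*}
\langle s_{\bs\alpha}(\X),\tilde H_{\bs\beta}(\X,q)\rangle=\sum_{\bs\tau}\frac{z_{[\bs\tau]}\chi^{\bs\alpha}_{\bs\tau}}{z_{\bs\tau}}\sum_{\substack{\bs\nu\\ [\bs\nu]=[\bs\tau]}}\frac{\mathcal Q^{\bs\beta}_{\bs\nu}(q)}{z_{\bs\nu}},
\end{equation*}
as required.

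There is no real obstacle; the only subtle point is bookkeeping. One must be careful that the sums over $\bs\tau$ and $\bs\nu$ a priori look unrestricted, but the factors $\chi^{\bs\alpha}_{\bs\tau}$ and $\mathcal Q^{\bs\beta}_{\bs\nu}(q)$ automatically enforce $\bs\tau\thickapprox\bs\alpha$ and $\bs\nu\thickapprox\bs\beta$ by the conventions of \S \ref{sss-types}, so no extra restriction on the index sets is needed in the final formula. The identification $p_{\bs\tau}(\X)=p_{[\bs\tau]}(\X)$ is the reason why two distinct ordered types can contribute to the same power-sum term, which is precisely what is captured by the inner sum over $\{\bs\nu:[\bs\nu]=[\bs\tau]\}$.
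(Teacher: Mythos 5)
Your proof is correct. The paper does not prove this lemma itself --- its proof is just the citation \cite[Lemma 2.3.5]{HLR} --- and your expansion of both sides in the power-sum basis, using $p_{\bs\tau}(\X)=p_{[\bs\tau]}(\X)$ and the orthogonality $\langle p_\lambda,p_\mu\rangle=z_\lambda\delta_{\lambda\mu}$, is exactly the standard argument underlying that reference, so this is essentially the same approach with the details filled in.
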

\begin{proof}
See \cite[Lemma 2.3.5]{HLR}.
\end{proof}

\subsection{A combinatorial formula for irreducible characters}\hfill

For simplicity, we will write $\mathfrak{S}_+=\mathfrak{S}_{n_+}$ and $\mathfrak{S}_-=\mathfrak{S}_{n_-}$.
\subsubsection{}
Let $\chi$ be a $\sigma$-stable irreducible character of $\GL_n(q)$. According to \S \ref{sss-1st-lemma}, we may assume that $\chi$ is induced from some $\sigma$-stable standard Levi subgroup $M=M_0\times M_1$, and we can write $\chi$ as a linear combination of Deligne-Lusztig characters:
\begin{equation}\label{cptechi1}
\chi=|\mathfrak{S}_+\times\mathfrak{S}_-\times W_1|^{-1}\!\!\!\!\!\!\!\!\sum_{\substack{w=(w_+,w_-,w_1)\\\in\mathfrak{S}_+\times\mathfrak{S}_-\times W_1}}\!\!\!\!\!\!\!\!\varphi_+(w_+)\varphi_-(w_-)\varphi(w_1)R^{G}_{T_{w}}\theta_{w}.
\end{equation}
Suppose that $\chi$ is of type $\bs\alpha$. Fixing an isomorphism $W_1\cong\prod_i(\mathfrak{S}_{n_i}\times\mathfrak{S}_{n_i})$, the irreducible character $(\varphi_+,\varphi_-,\varphi)$ of $\mathfrak{S}_+\times\mathfrak{S}_-\times W_1$ corresponds to a unique ordered type, the unordered version of which is exactly $\{\bs\alpha\}$. In this subsection, $\bs\alpha$ will be regarded as an element of $\tilde{\mathfrak{T}}$. The sum over $w$ only depends on its conjugacy class, which we denote by $\mathcal{O}(w)$. Note that the conjugacy classes of $\mathfrak{S}_+\times\mathfrak{S}_-\times W_1$ are parametrised by a subset of $\tilde{\mathfrak{T}}$. The type of $\mathcal{O}(w)$ will be denoted by $\bs\tau$. We have 
\begin{equation}
\frac{|\mathcal{O}(w)|}{|\mathfrak{S}_+\times\mathfrak{S}_-\times W_1|}=\frac{1}{z_{\bs\tau}},\quad\varphi_+(w_+)\varphi_-(w_-)\varphi(w_1)=\chi^{\{\bs\alpha\}}_{\bs\tau}.
\end{equation}
We will therefore replace $\sum_{w}(-)\text{ by }\sum_{\mathcal{O}(w)}|\mathcal{O}(w)|\cdot(-)$. However, for each $\mathcal{O}(w)$, we will choose a $w$ representing it so that $T_{w}$ and $\theta_{w}$ have definite meanings.

If $s\in T^F$ is a semi-simple element, then the character formula (\ref{eq-char-formula}) reads
\begin{equation}\label{cptechi2}
R^{G}_{T_{w}}\theta_{w}(s)=\frac{1}{|C_G(s)^{F}|}\sum_{\{h\in G^{F}\mid hsh^{-1}\in T_{w}\}}Q^{C_G(s)}_{C_{h^{-1}T_{w}h}(s)}(1)\theta_{w}(hsh^{-1}).
\end{equation}
Denote by $\bs\mu\in\tilde{\mathfrak{T}}_s$ the type of the conjugacy class of $s$. The $C_{G}(s)^{F}$-conjugacy classes of the $F$-stable maximal tori of $C_{G}(s)$ are also parametrised by a subsets of $\tilde{\mathfrak{T}}$. If $C_{h^{-1}T_{w}h}(s)$ is of type $\bs\nu\in\tilde{\mathfrak{T}}$ (with $\nu_+=\nu_-=\varnothing$), it is known that $\mathcal{Q}^{\bs\mu}_{\bs\nu}(q)=Q^{C_G(s)}_{C_{h^{-1}T_{w}h}(s)}(1)$, where the left hand side is the Green polynomial defined by symmetric functions.

Let $A_{\tau}^F$ and $B_{\tau}$ be as in  \S \ref{A^F_tau}. Recall the notation $z_{\bs\nu}$ therein. Combining (\ref{cptechi1}) and (\ref{cptechi2}) gives
\begingroup
\allowdisplaybreaks
\begin{align}
\nonumber
\chi(s)=&\sum_{\bs\tau}\sum_{h\in A^F_{\tau}}\frac{1}{|C_G(s)^{F}|}\cdot\frac{\chi^{\{\bs\alpha\}}_{\bs\tau}Q^{C_G(s)}_{C_{h^{-1}T_{w}h}(s)}(1)}{z_{\bs\tau}}\theta_{w}(hsh^{-1})\\
\label{chi(s)}
\stackrel{\circled{1}}{=}&\sum_{\bs\tau}\sum_{\{\bs\nu\in B_{\tau}\}}\frac{1}{|C_G(s)^{F}|}\cdot\frac{\chi^{\{\bs\alpha\}}_{\bs\tau}\mathcal{Q}^{\bs\mu}_{\bs\nu}(q)}{z_{\bs\tau}}\sum_{h\in A^F_{\bs\tau,\bs\nu}}\theta_{w}(hsh^{-1})
\end{align}
\endgroup
Equality \circled{1} uses the surjective map $A^F_{\tau}\rightarrow B_{\tau}$ described in \S \ref{A^F_tau}.

\subsubsection{}
Now we can work in the context of \S \ref{Irr-reg-sig}. For each $j$, let $\bs\mu_j$ be the type of the semi-simple conjugacy class $C_j$.
\begin{Lem}\label{sum-Irr-sigma-w}
We have
$$\sum_{\chi\in\Irr^{\sigma}_{\bs\omega}}\prod^{k}_{j=1}\chi(C_{j})=\frac{K(\bs\omega_{\ast})}{N(\bs\omega_{\ast})}\prod^{k}_{j=1}\langle s_{\{\bs\omega\}}(\Z_j),\tilde{H}_{\bs\mu_j}(\X_j,q)\rangle,$$ where for each $j$, $\Z_j$ is an independent family of variables.
\end{Lem}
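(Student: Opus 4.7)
The plan is to substitute the character formula (\ref{chi(s)}) for each $\chi(C_j)=\chi(s_j)$ (where $s_j\in T^F$ is a chosen representative), multiply over $j$, and swap the sum over $\chi\in\Irr^\sigma_{\bs\omega}$ past the indexing so that it acts only on the linear-character factors $\theta_{w_j}(h_js_jh_j^{-1})$. Since $\chi$ has type $\bs\omega$, we may replace $\chi^{\{\bs\alpha\}}_{\bs\tau_j}$ by $\chi^{\{\bs\omega\}}_{\bs\tau_j}$, and the product expands as
\[
\sum_{\chi}\prod_j\chi(s_j)=\sum_{(\bs\tau_j,\bs\nu_j)}\prod_j\frac{\chi^{\{\bs\omega\}}_{\bs\tau_j}\mathcal{Q}^{\bs\mu_j}_{\bs\nu_j}(q)}{|C_G(s_j)^F|\,z_{\bs\tau_j}}\sum_{(h_j)\in\prod_jA^F_{\bs\tau_j,\bs\nu_j}}\Big(\sum_\chi\prod_j\theta_{w_j}(h_js_jh_j^{-1})\Big).
\]

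To evaluate the inner bracket, I use the surjection (\ref{eq-reg->omega}): summing over $\chi\in\Irr^\sigma_{\bs\omega}$ is the same as summing over $\theta\in\Irr^\sigma_{\reg}(M_1^F)$ and dividing by the fibre size $2^{l(\bs\omega_*)}N(\bs\omega_*)$. For any fixed tuple $(h_j)$, Lemma \ref{sumofreg} yields $\sum_\theta\prod_j\theta_j(h_js_jh_j^{-1})=(-2)^{l(\bs\omega_*)}l(\bs\omega_*)!$, independently of the $h_j$'s. Using $K(\bs\omega_*)=(-1)^{l(\bs\omega_*)}l(\bs\omega_*)!$, the bracket is therefore the constant $K(\bs\omega_*)/N(\bs\omega_*)$, and the inner sum over $(h_j)$ collapses to the combinatorial factor $(K(\bs\omega_*)/N(\bs\omega_*))\prod_j|A^F_{\bs\tau_j,\bs\nu_j}|$.

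The delicate step is invoking Proposition \ref{card-Ataunu}, which gives $|A^F_{\bs\tau_j,\bs\nu_j}|=|L^F|\,z_{[\bs\tau_j]}/z_{\bs\nu_j}$: here the $z_\tau$ of the proposition is the centraliser of $\bs\tau_j$ in the full Weyl group $W_G=\mathfrak{S}_n$, which equals $z_{[\bs\tau_j]}$, whereas the $z_{\bs\tau_j}$ already present in the denominator is the centraliser in the smaller Weyl group $\mathfrak{S}_+\times\mathfrak{S}_-\times W_1$. Moreover $|L^F|=|C_G(s_j)^F|$ because centralisers of semi-simple elements in $\GL_n$ are connected. Substituting,
\[
\sum_{\chi}\prod_j\chi(s_j)=\frac{K(\bs\omega_*)}{N(\bs\omega_*)}\prod_j\Big(\sum_{\bs\tau_j}\frac{z_{[\bs\tau_j]}\chi^{\{\bs\omega\}}_{\bs\tau_j}}{z_{\bs\tau_j}}\sum_{\bs\nu_j\in B_{\tau_j}}\frac{\mathcal{Q}^{\bs\mu_j}_{\bs\nu_j}(q)}{z_{\bs\nu_j}}\Big).
\]
By the lemma immediately following Proposition \ref{card-Ataunu}, $B_{\tau_j}=\{\bs\nu_j:[\bs\nu_j]=[\bs\tau_j]\}$, so each $j$-factor is precisely the right-hand side of Lemma \ref{<s,H>} with $\bs\alpha=\{\bs\omega\}$ and $\bs\beta=\bs\mu_j$, hence equals $\langle s_{\{\bs\omega\}}(\Z_j),\tilde{H}_{\bs\mu_j}(\Z_j,q)\rangle$. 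This yields the claimed identity.

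The main obstacle is the bookkeeping when Proposition \ref{card-Ataunu} is applied: two different notions of $z_\tau$ are in play (centraliser of a cycle type in the full $\mathfrak{S}_n$ versus in the smaller Weyl group of $M_{00}\times M_1$), and only by carefully keeping them apart does one recover the combination $z_{[\bs\tau]}/z_{\bs\tau}$ prescribed by the right-hand side of Lemma \ref{<s,H>}. Matching the indexing set $B_{\tau_j}$ with the summation domain $[\bs\nu_j]=[\bs\tau_j]$ in that lemma is the final checkpoint.
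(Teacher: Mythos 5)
Your proposal is correct and follows essentially the same route as the paper's proof: substitute the character formula (\ref{chi(s)}) into each factor, convert the sum over $\chi\in\Irr^{\sigma}_{\bs\omega}$ into a sum over $\theta\in\Irr^{\sigma}_{\reg}(M_1^F)$ divided by the fibre size $2^{l(\bs\omega_{\ast})}N(\bs\omega_{\ast})$, apply Lemma \ref{sumofreg}, collapse the $h_j$-sums via Proposition \ref{card-Ataunu}, and finish with Lemma \ref{<s,H>}. Your explicit remark distinguishing $z_{[\bs\tau_j]}$ (centraliser in $\mathfrak{S}_n$) from $z_{\bs\tau_j}$ (centraliser in the Weyl group of $M_{00}\times M_1$) is a correct clarification of a step the paper passes over quickly.
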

\begin{proof}
We calculate 
\begingroup
\allowdisplaybreaks
\begin{align}
\nonumber
&\sum_{\chi\in\Irr^{\sigma}_{\bs\omega}}\prod^{k}_{j=1}\chi(C_{j})\\
\nonumber
\stackrel{\circled{1}}{=}&\frac{1}{2^{l(\bs\omega_{\ast})}N(\bs\omega_{\ast})}\sum_{\bs\tau_1,\ldots,\bs\tau_{k}}\sum_{\substack{(\bs\nu_1,\ldots,\bs\nu_{k})\\\in\prod_jB_{\tau_j}}}\prod_{j=1}^{k}\frac{1}{|C_G(s)^{F}|}\cdot\frac{\chi^{\{\bs\omega\}}_{\bs\tau_j}\mathcal{Q}^{\bs\mu_j}_{\bs\nu_j}(q)}{z_{\bs\tau_j}}\!\!\!\!\sum_{\substack{(h_1,\ldots,h_{k})\\\in\prod_jA^F_{\bs\tau_j,\bs\nu_j}}}\sum_{\theta\in\Irr^{\sigma}_{reg}(M_1^F)}\prod_{j=1}^{k}\theta_j(h_js_jh_j^{-1})\\
\nonumber
\stackrel{\circled{2}}{=}&\frac{1}{2^{l(\bs\omega_{\ast})}N(\bs\omega_{\ast})}\sum_{\bs\tau_1,\ldots,\bs\tau_{k}}\sum_{\substack{(\bs\nu_1,\ldots,\bs\nu_{k})\\\in\prod_jB_{\tau_j}}}\prod_{j=1}^{k}\frac{1}{|C_G(s)^{F}|}\cdot\frac{\chi^{\{\bs\omega\}}_{\bs\tau_j}\mathcal{Q}^{\bs\mu_j}_{\bs\nu_j}(q)}{z_{\bs\tau_j}}\!\!\!\!\sum_{\substack{(h_1,\ldots,h_{k})\\\in\prod_jA^F_{\bs\tau_j,\bs\nu_j}}}(-1)^{l(\bs\omega_{\ast})}2^{l(\bs\omega_{\ast})}l(\bs\omega_{\ast})!\\
\nonumber
\stackrel{\circled{3}}{=}&\frac{K(\bs\omega_{\ast})}{N(\bs\omega_{\ast})}\prod^{k}_{j=1}\sum_{\bs\tau_j}\frac{z_{[\bs\tau_j]}\chi^{\{\bs\omega\}}_{\bs\tau_j}}{z_{\bs\tau_j}}\sum_{\{\bs\nu_j\mid[\bs\nu_j]=[\bs\tau_j]\}}\frac{\mathcal{Q}^{\bs\mu_j}_{\bs\nu_j}(q)}{z_{\bs\nu_j}}\\
\stackrel{\circled{4}}{=}&\frac{K(\bs\omega_{\ast})}{N(\bs\omega_{\ast})}\prod^{k}_{j=1}\langle s_{\{\bs\omega\}}(\Z_j),\tilde{H}_{\bs\mu_j}(\Z_j,q)\rangle,
\end{align}
\endgroup
In equality \circled{1}, we have applied (\ref{chi(s)}) to each $\chi(C_j)$. The factor $2^{l(\bs\omega_{\ast})}N(\bs\omega_{\ast})$ comes from the surjective map (\ref{eq-reg->omega}). In equality \circled{2}, we have used Lemma \ref{sumofreg}. We have \circled{3} because the summation over $h_j$ is independent of $h_j$, which produces $|C_G(s)^{F}|z_{[\bs\tau_j]}/z_{\bs\nu_j}$ by Proposition \ref{card-Ataunu}. Equality \circled{4} uses Lemma \ref{<s,H>}.
\end{proof}

\subsection{Some notations}
\subsubsection{}
For any partition $\lambda$, the Hook polynomial $H_{\lambda}(q)$ is defined by:
\begin{equation}\label{eq-hook}
H_{\lambda}(q):=\prod_{x\in\lambda}(1-q^{h(x)}),
\end{equation}
where $x$ runs over the boxes in the Young diagram of $\lambda$, and $h(x)$ is the hook length of $x$. If we denote by $\lambda^{\ast}$ the dual partition of $\lambda$, then 
\begin{equation}\label{eq-hook-lgth}
\sum_{x\in\lambda}h(x)=|\lambda|+n(\lambda)+n(\lambda^{\ast}).
\end{equation}
For any $\bs\omega=\omega_+\omega_-(\omega_i)\in\mathfrak{T}$, put $H_{\bs\omega}(q):=H_{\omega_+}(q)H_{\omega_-}(q)\prod_iH_{\omega_i}(q)$.  Then for $\chi\in\Irr_{\bs\omega}^{\sigma}$, we have (\cite[Chapter IV, \S 6 (6.7)]{Mac}):
\begin{equation}\label{eq-Mac6.7}
\frac{|\GL_n(q)|}{\chi(1)}=(-1)^nq^{\frac{1}{2}n(n-1)-n(\{\bs\omega\})}H_{\{\bs\omega\}}(q).
\end{equation}
Recall the notation $\{\bs\omega\}$ in \S \ref{sss-types}.

\subsubsection{}\label{dim-Ch}
According to \cite[Theorem 4.6]{Shu1}, the dimension of the character variety is given by:
\begingroup
\allowdisplaybreaks
\begin{align*}
d:=&(2g-2)\dim G+\sum^{k}_{j=1}\dim C_j\\
=&(2g-2)\dim G+\sum^{k}_{j=1}(\dim G-\dim C_G(s_j))\\
=&(2g-2)n^2+kn^2-\sum^{k}_{j=1}(2n(\bs\mu_j)+n)\\
=&n^2(2g+k-2)-kn-\sum^{k}_{j=1}2n(\bs\mu_j).
\end{align*}
\endgroup
Note that $n((1^n))$ is equal to the number of positive roots in $\GL_n$.

\subsection{The Formula for E-Polynomials}
\subsubsection{}\label{Omega(q)}
Define the infinite series:
\begingroup
\allowdisplaybreaks
\begin{align*}
\Omega_{\bullet}(q):=&\sum_{\lambda\in\mathcal{P}}q^{|\lambda|(1-g)}(q^{-n(\lambda)}H_{\lambda}(q))^{2g+k-2}\prod^k_{j=1}s_{\lambda}[\frac{\Z_j}{1-q}]\\
\Omega_{\ast}(q):=&\sum_{\lambda\in\mathcal{P}}q^{|\lambda|(2-2g)}(q^{-n(\lambda)}H_{\lambda}(q))^{4g+2k-4}\prod^k_{j=1}s_{\lambda}[\frac{\Z_j}{1-q}]^2,
\end{align*}
\endgroup

Recall that for each $1\le j\le k$, we denote by $\bs\mu_j$ the type of the semi-simple conjugacy class $C_j$.
\begin{Thm}\label{Main-Thm}
Suppose that $\mathcal{C}$ is generic. Then with the notations above, we have
\begingroup
\allowdisplaybreaks
\begin{align}
|\Ch_{\mathcal{C}}(\mathbb{F}_q)|=&~q^{\frac{1}{2}d}
\left\langle\frac{\Omega_{\bullet}(q)^2}{\Omega_{\ast}(q)},\prod^{k}_{j=1}h_{\bs\mu^{\ast}_j}(\Z_j)\right\rangle.
\end{align}
\endgroup
\end{Thm}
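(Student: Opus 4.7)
The plan is to reduce $|\Ch_{\mathcal{C}}(\mathbb{F}_q)|$ to a character sum via Frobenius's formula and then repackage it as a Hall inner product of symmetric functions. First, Proposition \ref{M=U/G} gives $|\Ch_{\mathcal{C}}^{\phi}(\mathbb{F}_q)|=|\Rep_{\mathcal{C}}^{\phi}(\mathbb{F}_q)|/|\GL_n(q)|$, and the Frobenius formula (\ref{eq-Frob-Form}) expresses the numerator as a sum over $\Irr(\GL_n(q))^{\sigma}$. Using the strongly generic hypothesis together with Lemma \ref{restrictTypes}, I restrict the sum to $\chi\in\Irr^{\sigma}_{st}$ and then partition this set by type, $\Irr^{\sigma}_{st}=\bigsqcup_{\bs\omega\in\mathfrak{T}(n)}\Irr^{\sigma}_{\bs\omega}$. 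Since $\chi(1)$ is constant on each $\Irr^{\sigma}_{\bs\omega}$, this produces
$$|\Ch_{\mathcal{C}}(\mathbb{F}_q)|=\sum_{\bs\omega\in\mathfrak{T}(n)}\left(\frac{|G|}{\chi(1)}\right)^{2g+k-2}\frac{\prod_{j}|C_j|}{|G|^{k}}\sum_{\chi\in\Irr^{\sigma}_{\bs\omega}}\prod_{j=1}^{k}\chi(C_j).$$

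Next I would substitute the dimension formula (\ref{eq-Mac6.7}) to express $(|G|/\chi(1))^{2g+k-2}$ via $H_{\{\bs\omega\}}(q)^{2g+k-2}$ and a power of $q$ indexed by $n(\{\bs\omega\})$; by the very definition of the type product, both decompose multiplicatively: $H_{\{\bs\omega\}}(q)=H_{\omega_+}(q)H_{\omega_-}(q)\prod_{i}H_{\omega_i}(q)^{2}$, and analogously for $n(\{\bs\omega\})$. I then apply Lemma \ref{sum-Irr-sigma-w} to replace the inner character sum by $\frac{K(\bs\omega_{\ast})}{N(\bs\omega_{\ast})}\prod_j\langle s_{\{\bs\omega\}}(\Z_j),\tilde{H}_{\bs\mu_j}(\Z_j,q)\rangle$, and invoke Lemma \ref{XoXlBCType} together with the adjunction of the Hall product under the plethysm $\Z\mapsto\Z/(1-q)$ to convert each $\tilde H_{\bs\mu_j}$ into $h_{\bs\mu_j^{\ast}}(\Z_j)$, thereby moving the plethystic substitution onto $s_{\{\bs\omega\}}$. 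Because $\{\bs\omega\}$ contains each $\omega_i$ with doubled multiplicity, we obtain the factorisation $s_{\{\bs\omega\}}[\Z/(1-q)]=s_{\omega_+}[\Z/(1-q)]\,s_{\omega_-}[\Z/(1-q)]\prod_{i}s_{\omega_i}[\Z/(1-q)]^{2}$.

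The crux of the argument is the following combinatorial repackaging. Setting
$$E_{\lambda}:=q^{|\lambda|(1-g)}\bigl(q^{-n(\lambda)}H_{\lambda}(q)\bigr)^{2g+k-2}\prod_{j=1}^{k}s_{\lambda}\!\left[\frac{\Z_j}{1-q}\right],$$
so that $\Omega_{\bullet}=\sum_{\lambda}E_{\lambda}$ and $\Omega_{\ast}=\sum_{\lambda}E_{\lambda}^{2}$, the accumulated sum over types takes the form $\sum_{\bs\omega\in\mathfrak{T}}\frac{K(\bs\omega_{\ast})}{N(\bs\omega_{\ast})}E_{\omega_+}E_{\omega_-}\prod_{i}E_{\omega_i}^{2}$. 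The free sums over $\omega_+$ and over $\omega_-$ each yield a factor $\Omega_{\bullet}$, while the remaining sum over the unordered sequence $(\omega_i)_i$ of non-trivial partitions, weighted by $K(\bs\omega_{\ast})/N(\bs\omega_{\ast})=(-1)^{l(\bs\omega_{\ast})}l(\bs\omega_{\ast})!/\prod_{\lambda}m_{\lambda}!$, collapses via the standard ordered/unordered conversion to the geometric series $\sum_{l\ge0}(-1)^{l}\bigl(\sum_{\lambda\ne\varnothing}E_{\lambda}^{2}\bigr)^{l}=1/\Omega_{\ast}$. The product is exactly $\Omega_{\bullet}^{2}/\Omega_{\ast}$, and pairing with $\prod_{j}h_{\bs\mu_j^{\ast}}(\Z_j)$ under the Hall inner product produces the right-hand side of the theorem. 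The size constraint $|\{\bs\omega\}|=n$ is enforced automatically, since only the degree-$n$ component in each $\Z_j$ pairs non-trivially with $h_{\bs\mu_j^{\ast}}(\Z_j)$.

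The main obstacle will be the bookkeeping of signs and powers of $q$: the $q$-prefactors from (\ref{eq-Mac6.7}), the factor $(-1)^{|\bs\mu_j|}q^{n(\bs\mu_j)+|\bs\mu_j|}b_{\bs\mu_j}(q^{-1})$ produced by Lemma \ref{XoXlBCType}, the centraliser contribution in $|C_j|/|G|$ via (\ref{1/|GLn|}), and the global sign $(-1)^{n(2g+k-2)}$, must all combine to yield exactly the prefactor $q^{d/2}$ with $d=n^{2}(2g+k-2)-kn-2\sum_j n(\bs\mu_j)$ from \S\ref{dim-Ch}. A secondary technical point is the correct use of the plethystic adjunction $\langle F[\Z],G[\Z/(1-q)]\rangle=\langle F[\Z/(1-q)],G[\Z]\rangle$, which is what permits the $1/(1-q)$ plethysm to be absorbed into the definition of $E_{\lambda}$.
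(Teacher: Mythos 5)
Your proposal is correct and follows essentially the same route as the paper's proof: Frobenius formula plus Proposition \ref{M=U/G}, restriction to $\Irr^{\sigma}_{st}$ via Lemma \ref{restrictTypes}, grouping by type with (\ref{eq-Mac6.7}), Lemma \ref{sum-Irr-sigma-w}, Lemma \ref{XoXlBCType} with the plethystic adjunction, and the same $\Omega_{\bullet}^2/\Omega_{\ast}$ factorisation (your $E_{\lambda}$ bookkeeping is just a tidier phrasing of the paper's three-factor rewriting and the expansion of $1/\Omega_{\ast}$). The only quibbles are notational: the type sum should run over $\{\bs\omega:|\{\bs\omega\}|=n\}$ rather than $\mathfrak{T}(n)$ (which you in effect correct at the end), and, as in the paper itself, the lemmas actually invoked require the strongly generic hypothesis.
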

\begin{proof}
Proposition \ref{M=U/G}, equation (\ref{RepCFq}) and Proposition \ref{Frob-Form} now give
\begingroup
\allowdisplaybreaks
\begin{align}
\nonumber
|\Ch_{\mathcal{C}}(\mathbb{F}_q)|=&\sum_{\chi\in\Irr(G^F)^{\sigma}}\left(\frac{|G^F|}{\chi(1)}\right)^{2g-2}\prod_{j=1}^{k}\frac{|C_{j}|\chi(C_{j})}{\chi(1)}\\
=&\sum_{\substack{\bs\omega\in\mathfrak{T}\\|\{\bs\omega\}|=n}}\frac{|G^F|^{2g-2}\prod^{k}_{j=1}|C_{j}|}{\chi(1)^{2g+k-2}}\sum_{\chi\in\Irr^{\sigma}_{\bs\omega}}\prod^{k}_{j=1}\chi(C_{j}).
\label{eq-brute-E-poly}
\end{align}
\endgroup
In the second equality, we have used the fact that irreducible characters of the same type  have the same degree (see (\ref{eq-Mac6.7})). This formula holds for any generic tuple of conjugacy classes. 

We calculate
\begingroup
\allowdisplaybreaks
\begin{align}
\nonumber
&|\Ch_{\mathcal{C}}(\mathbb{F}_q)|\\
\nonumber
=&\sum_{\substack{\bs\omega\in\mathfrak{T}\\|\{\bs\omega\}|=n}}\frac{|G^F|^{2g-2}\prod^{k}_{j=1}|C_{j}|}{\chi(1)^{2g+k-2}}\sum_{\chi\in\Irr^{\sigma}_{\bs\omega}}\prod^{k}_{j=1}\chi(C_{j})\\
\nonumber
\stackrel{\circled{1}}{=}&\sum_{\bs\omega\in\mathfrak{T}}\frac{K(\bs\omega_{\ast})}{N(\bs\omega_{\ast})}\frac{|G^F|^{2g-2}\prod^{k}_{j=1}|C_{j}|}{\chi(1)^{2g+k-2}}\prod^{k}_{j=1}\langle s_{\{\bs\omega\}}(\Z_j),\tilde{H}_{\bs\mu_j}(\Z_j,q)\rangle\\
\nonumber
=&\sum_{\bs\omega\in\mathfrak{T}}\frac{K(\bs\omega_{\ast})}{N(\bs\omega_{\ast})}\left(\frac{|G^F|}{\chi(1)}\right)^{2g+k-2}\prod^{k}_{j=1}\frac{|C_{j}|}{|G^F|}\prod^{k}_{j=1}\langle s_{\{\bs\omega\}}(\Z_j),\tilde{H}_{\bs\mu_j}(\Z_j,q)\rangle\\
\nonumber
\stackrel{\circled{2}}{=}&\sum_{\bs\omega\in\mathfrak{T}}\frac{K(\bs\omega_{\ast})}{N(\bs\omega_{\ast})}((-1)^nH_{\{\bs\omega\}}(q)q^{\frac{1}{2}n(n-1)-n(\{\bs\omega\})})^{2g+k-2}\prod^{k}_{j=1}\langle s_{\{\bs\omega\}}(\Z_j),\frac{|C_{j}|}{|G^F|}\tilde{H}_{\bs\mu_j}(\Z_j,q)\rangle\\
\nonumber
=&(-1)^{kn}q^{\frac{1}{2}(n^2(2g+k-2)-kn)}\\
&\cdot\left\langle \sum_{\bs\omega\in\mathfrak{T}}\!\frac{K(\bs\omega_{\ast})}{N(\bs\omega_{\ast})}q^{(1-g)|\{\bs\omega\}|}(H_{\{\bs\omega\}}(q)q^{-n(\{\bs\omega\})})^{2g+k-2}\!\prod^{k}_{j=1}\!s_{\{\bs\omega\}}(\Z_j),\prod^{k}_{j=1}\!\!\frac{|C_{j}|}{|G^F|}\tilde{H}_{\bs\mu_j}(\Z_j,q)\rangle \right\rangle.\label{innerprod}
\end{align}
\endgroup
In \circled{1} the sum can be taken over the entire $\mathfrak{T}$ because if $\bs\omega$ was not of size $N$ then the inner product of symmetric functions would vanish. We have also used Lemma \ref{sum-Irr-sigma-w} in this equality. Equality \circled{2} uses (\ref{eq-Mac6.7}).

Equation (\ref{1/|GLn|}) shows that
\begin{equation}
\frac{|C_{j}|}{|G^F|}=q^{-2n(\bs\mu_j)-|\bs\mu_j|}b_{\bs\mu_j}(q^{-1})^{-1}.
\end{equation}
Combined with Lemma \ref{XoXlBCType}, this shows
\begin{equation}
q^{-n(\bs\mu_j)}h_{\bs\mu^{\ast}_j}[\frac{\Z_j}{1-q}]=
(-1)^{|\bs\mu_j|}\frac{|C_{j}|}{|G^F|}\tilde{H}_{\bs\mu_j}(\Z_j;q).
\end{equation}

For any symmetric functions $u(\Z)$ and $v(\Z)$, we have $$\left\langle u[\frac{\Z}{1-q}],v(\Z)\right\rangle=\left\langle u(\Z),v[\frac{\Z}{1-q}]\right\rangle.$$ This can be checked on the basis of power sums. We can then move $1/(1-q)$ to the left hand side of the inner product.

Rewrite the left hand side of the inner product (\ref{innerprod}) as follows:
\begingroup
\allowdisplaybreaks
\begin{align}
\nonumber
&\sum_{\bs\omega\in\mathfrak{T}}\frac{K(\bs\omega_{\ast})}{N(\bs\omega_{\ast})}q^{(1-g)|\{\bs\omega\}|}(H_{\{\bs\omega\}}(q)q^{-n(\{\bs\omega\})})^{2g+k-2}\prod^{k}_{j=1}s_{\{\bs\omega\}}[\frac{\Z_j}{1-q}]\\
\nonumber
=&~~\left(\sum_{\omega_+\in\mathcal{P}}q^{(1-g)|\omega_+|}(H_{\omega_+}(q)q^{-n(\omega_+)})^{2g+k-2}\prod^{k}_{j=1}s_{\omega_+}[\frac{\Z_j}{1-q}]\right)\\
\nonumber
&\cdot\left(\sum_{\omega_-\in\mathcal{P}}q^{(1-g)|\omega_-|}(H_{\omega_-}(q)q^{-n(\omega_-)})^{2g+k-2}\prod^{k}_{j=1}s_{\omega_-}[\frac{\Z_j}{1-q}]\right)\\
\nonumber
&\cdot\left(\sum_{\bs\omega_{\ast}}\frac{K(\bs\omega_{\ast})}{N(\bs\omega_{\ast})}q^{(2-2g)|\bs\omega_{\ast}|}(H_{\bs\omega_{\ast}}(q)^2q^{-2n(\bs\omega_{\ast})})^{2g+k-2}\prod^{k}_{j=1}s_{\bs\omega_{\ast}}[\frac{\Z_j}{1-q}]^2\right).
\end{align}
\endgroup

By definition
\begingroup
\allowdisplaybreaks
\begin{align}
\nonumber
\frac{1}{\Omega_{\ast}(q)}=&\sum_{m\ge 0}(-1)^m\!\!\!\!\sum_{\substack{\bs\omega_{\ast}=(m_{\lambda})_{\lambda}\in\mathfrak{T}_{\ast}\\l(\bs\omega_{\ast})=m}}\frac{m!}{\prod_{\lambda}m_{\lambda}!}q^{(2-2g)|\bs\omega_{\ast}|}(H_{\bs\omega_{\ast}}(q)^2q^{-2n(\bs\omega_{\ast})})^{2g+k-2}\prod^{k}_{j=1}s_{\bs\omega_{\ast}}[\frac{\Z_j}{1-q}]^2\\
=&\sum_{\bs\omega_{\ast}\in\mathfrak{T}_{\ast}}\frac{K(\bs\omega_{\ast})}{N(\bs\omega_{\ast})}q^{(2-2g)|\bs\omega_{\ast}|}(H_{\bs\omega_{\ast}}(q)^2q^{-2n(\bs\omega_{\ast})})^{2g+k-2}\prod^{k}_{j=1}s_{\bs\omega_{\ast}}[\frac{\Z_j}{1-q}]^2,
\end{align}
\endgroup
whence
\begingroup
\allowdisplaybreaks
\begin{align*}
\sum_{\bs\omega\in\mathfrak{T}}\frac{K(\bs\omega_{\ast})}{N(\bs\omega_{\ast})}q^{(1-g)|\{\bs\omega\}|}(H_{\{\bs\omega\}}(q)q^{-n(\{\bs\omega\})})^{2g+k-2}\prod^{k}_{j=1}s_{\{\bs\omega\}}[\frac{\Z_j}{1-q}]^2
=\frac{\Omega_{\bullet}(q)^2}{\Omega_{\ast}(q)}
\end{align*}
\endgroup

Therefore,
\begin{equation}
|\Ch_{\mathcal{C}}(\mathbb{F}_q)|
=q^{\frac{1}{2}\left(n^2(2g+k-2)-kn\right)-\sum^{k}_{j=1}n(\bs\mu_j)}\cdot\left\langle\frac{\Omega_{\bullet}(q)^2}{\Omega_{\ast}(q)},\prod^{k}_{j=1}h_{\bs\mu^{\ast}_j}(\Z_j)\right\rangle.
\end{equation}
Finally, we use \S \ref{dim-Ch}.
\end{proof}

\begin{Cor}
(i). The $t=-1$ specialisation of part (iii) of Conjecture \ref{The-Conj} holds. (ii). The $t=-1$ specialisation of Conjecture \ref{Cur-Poin} holds.
\end{Cor}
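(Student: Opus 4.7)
The plan is to combine Theorem \ref{Main-Thm} with Katz's theorem on E-polynomials, and then to check that the specialisation $(z,w)=(\sqrt{q},1/\sqrt{q})$ of $\mathbb{H}_{\bs\mu}(z,w)$ matches that formula. For (ii) I then exploit the intrinsic $(z,w)$-symmetry of $\mathbb{H}_{\bs\mu}$.

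For part (i), the $R$-model $\sCh_{\mathcal{C}}$ of \S\ref{A_0} is a spreading out over $R\subset\mathbb{C}$, and the construction of $R$ guarantees that for every ring homomorphism $\phi\colon R\to\mathbb{F}_q$ the tuple $\mathcal{C}^\phi$ remains generic. Theorem \ref{Main-Thm} therefore delivers a universal polynomial in $q$ counting $|\Ch^{\phi}_{\mathcal{C}}(\mathbb{F}_q)|$, and Katz's theorem \cite[Appendix]{HRV} identifies this polynomial with $E(\Ch_{\mathcal{C}};q)=H_c(\Ch_{\mathcal{C}};q,-1)$. The task reduces to the two identities
\begin{equation*}
\Omega_{\bullet}(q)=\Omega_{\bullet}(\sqrt{q},1/\sqrt{q}),\qquad \Omega_{\ast}(q)=\Omega_{\ast}(\sqrt{q},1/\sqrt{q}).
\end{equation*}
When $zw=1$ the product formulas give $N_\lambda(zw,z^2,w^2)=N_\lambda(z^2,w^2)$ and $\tilde{N}_\lambda(zw,z^2,w^2)=\tilde{N}_\lambda(z^2,w^2)$, so the wreath ratio in $\Omega_{\bullet}$ collapses to $1$ --- this is the announced invisibility of the wreath terms at the level of the E-polynomial. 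A direct expansion then yields $N_\lambda(q,1/q)=q^{-|\lambda|-2n(\lambda)}H_\lambda(q)^2$, and combining this with the classical Macdonald specialisation
\begin{equation*}
H_\lambda(\Z;q,1/q)=q^{-n(\lambda)}H_\lambda(q)\,s_\lambda[\Z/(1-q)]
\end{equation*}
and matching the exponents of $q$ and of $H_\lambda(q)$ gives both identities term by term in $\lambda$.

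For part (ii), I would first establish the symmetry $\mathbb{H}_{\bs\mu}(z,w)=\mathbb{H}_{\bs\mu}(w,z)$. Writing $x^t\in\lambda^t$ for the box corresponding to $x\in\lambda$, the equalities $a_{\lambda^t}(x^t)=l_\lambda(x)$ and $l_{\lambda^t}(x^t)=a_\lambda(x)$ together with an elementary sign manipulation of the two factors defining $N_\lambda$ give $N_{\lambda^t}(u,w,z)=N_\lambda(u,z,w)$, and the same identity holds for $\tilde{N}_\lambda$ because the condition $h(x)\equiv 0\bmod 2$ is transpose-invariant. Combined with the transpose symmetry $H_{\lambda^t}(\Z;w^2,z^2)=H_\lambda(\Z;z^2,w^2)$ of the modified Macdonald polynomials, re-indexing $\lambda\mapsto\lambda^t$ in the sum gives $\Omega_{\bullet}(z,w)=\Omega_{\bullet}(w,z)$ and $\Omega_{\ast}(z,w)=\Omega_{\ast}(w,z)$, hence the symmetry of $\mathbb{H}_{\bs\mu}$. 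Substituting $q\mapsto q^{-1}$ in part (i) and applying this symmetry then yields $E(\Ch_{\mathcal{C}};q^{-1})=q^{-d}E(\Ch_{\mathcal{C}};q)$, which is Theorem \ref{E-Cur-Poin}. A minor side point is that the $t=-1$ specialisation $(t\sqrt{q})^d=(-\sqrt{q})^d$ in Conjecture \ref{The-Conj}(iii) agrees with the $(\sqrt{q})^d$ of Theorem \ref{Main-Thm-intro} because $d$ is always even, as read off from the formula in \S\ref{dim-Ch} by parity.

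The principal technical obstacle I expect is the Macdonald specialisation $H_\lambda(\Z;q,1/q)=q^{-n(\lambda)}H_\lambda(q)\,s_\lambda[\Z/(1-q)]$: the identity itself is standard, but one must verify that the normalisation of the modified Macdonald polynomials $H_\lambda(\Z;q,t)$ implicit in the definitions of $\Omega_{\bullet}(z,w)$ and $\Omega_{\ast}(z,w)$ is the same as the one appearing in \S\ref{Omega(q)}. Once that is pinned down, the remaining verification is direct exponent bookkeeping, in the spirit of the proofs in \cite{HLR} and \cite{Shu3}.
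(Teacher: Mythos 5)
Your proposal is correct and follows essentially the same route as the paper: specialise $(z,w)=(\sqrt q,1/\sqrt q)$, note the collapse of the wreath ratio and use the specialisations $N_\lambda(q,q^{-1})=q^{-|\lambda|-2n(\lambda)}H_\lambda(q)^2$ and $H_\lambda(\Z;q,q^{-1})=q^{-n(\lambda)}H_\lambda(q)s_\lambda[\Z/(1-q)]$ to match Theorem \ref{Main-Thm}, then deduce (ii) from the symmetry $\Omega(z,w)=\Omega(w,z)$. The only difference is that you prove the specialisation and transposition identities directly (and check the parity of $d$) where the paper cites \cite[Lemmas 8.1.3--8.1.4, Remark 8.1.2]{Shu3} and \cite[Proposition 3.3.2]{Hai}.
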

\begin{proof}
Specialising $\Omega_{\bullet}(z,w)$ and $\Omega_{\ast}(z,w)$ to $z=\sqrt{q}$, $w=\sqrt{q}^{-1}$, we get
\begingroup
\allowdisplaybreaks
\begin{align*}
\Omega_{\bullet}(\sqrt{q},\sqrt{q}^{-1})=&\sum_{\lambda\in\mathcal{P}}N_{\lambda}(q,q^{-1})^{g-1}\prod^k_{j=1}H_{\lambda}(\Z_j;q,q^{-1})\\
\Omega_{\ast}(\sqrt{q},\sqrt{q}^{-1})=&\sum_{\lambda\in\mathcal{P}}N_{\lambda}(q,q^{-1})^{2g-2}\prod^k_{j=1}H_{\lambda}(\Z_j;q,q^{-1})^2.
\end{align*}
\endgroup
Then use \cite[Lemma 8.1.3, Lemma 8.1.4]{Shu3} and \cite[Proposition 3.3.2]{Hai}. This proves (i).

By \cite[Remark 8.1.2]{Shu3}, we have
$$
\Omega_{\bullet}(z,w)=\Omega_{\bullet}(w,z),\quad\Omega_{\ast}(z,w)=\Omega_{\ast}(w,z),
$$
and so
$$
\mathbb{H}_{\bs\mu}(z,w)=\mathbb{H}_{\bs\mu}(w,z),\quad\mathbb{H}_{\bs\mu}(z,w)=\mathbb{H}_{\bs\mu}(-z,-w).
$$
We deduce from this that part (iii) of Conjecture \ref{The-Conj} implies Conjecture \ref{Cur-Poin}. Therefore the $t=-1$ specialisation holds. 
\end{proof}

\section{Mixed Hodge Polynomial}\label{Sec-MHP}
\subsection{Evidences of the conjecture}\hfill

We check that our conjectural formula for the mixed Hodge polynomial is consistent with known cohomological results when $n=1$ and $n=2$. For simplicity, we assume $k=1$.
\subsubsection{}
We first consider the case $n=1$. The definition of the character variety reads
$$
\Ch_{\mathcal{C}}(\Sigma)=\left\{(A_i,B_i)_i\in \GL_1^{2g}\mid B_1^{-2}X_1=1\right\}\ds\GL_1,
$$
for a given element $X_1$ of $\GL_1$. The action of $\GL_1$ on $A_1$ is given by 
$$gA_1\sigma(g)^{-1}=g^2A_1,\text{ for $g\in\GL_1$,}$$
and its action on all other components is trivial. We see that there are two connected components corresponding to the two solutions of $B_1$, and each connected component is a $2g-2$-dimensional torus. The mixed Hodge polynomial is then $2(t+qt^2)^{2g-2}$.

The formula in Conjecture \ref{The-Conj} (iii) gives 
$$2(t\sqrt{q})^{2g-2}N_{(1)}(zw,z^2,w^2)^{2g-2}\mid_{z=-t\sqrt{q},w=\sqrt{q}^{-1}}=2(t\sqrt{q})^{2g-2}(t\sqrt{q}+\frac{1}{\sqrt{q}})^{2g-2},$$
as expected.

\subsubsection{}
Then we consider the case $n=2$. In this case, the centraliser of $\sigma$ in $\GL_2$ is just $\SL_2$, and $\sigma$ acts on the center by inversion. Now the character variety is defined as
$$
\Ch_{\mathcal{C}}(\Sigma):=\left\{(A_i,B_i)_i(X_1)\in \GL_2^{2g}\times C_1\mid A_1\sigma(B_1)A_1^{-1}B_1^{-1}\prod_{i=2}^g[A_i,B_i]X_1=1\right\}\ds\GL_2,
$$
where $C_1$ is the conjugacy class of $\diag(a,a^{-1})$ with $a^2\ne 1$. For any $1\le i\le g$, write $A_i=a_i\alpha_i$ and $B_i=b_i\beta_i$, with $a_i$, $b_i\in\SL_2$ and $\alpha_i$, $\beta_i\in\mathbb{C}^{\ast}$, the center of $\GL_2$. Thus we realise $\Ch_{\mathcal{C}}$ as a quotient of 
$$
M:=\left\{(a_i,b_i)_i(\alpha_i,\beta_i)_i(X_1)\in \SL_2^{2g}\times(\mathbb{C}^{\ast})^{2g}\times C_1\mid \prod_{i=1}^g[a_i,b_i]\beta_1^{-2}X_1=1\right\}\ds\GL_2,
$$
where $\GL_2$ acts on $X_1$, $a_i$ and $b_i$, $1\le i\le g$, by conjugation, acts trivially on $\beta_1$, $\alpha_i$, $\beta_i$, $2\le i\le g$, and acts on $\alpha_1$ by 
$$g:\alpha_1\mapsto\det(g)\alpha_1,\text{ for $g\in\GL_2$.}$$
The finite group $\Gamma:=\{\pm\Id\}^{2g}\subset(\mathbb{C}^{\ast})^{2g}$ acts on $M$ in such a way that $\Ch_{\mathcal{C}}\cong M/\Gamma$.
Define
$$
M_{\pm}:=\left\{(a_i,b_i)_i(X_1)\in \SL_2^{2g}\times C_1\mid \prod_{i=1}^g[a_i,b_i]X_1=\pm1\right\}\ds\SL_2,
$$
where $\SL_2$ acts by conjugation on each component. Since it is necessary that $\beta_1^{-2}=\pm\Id$, we have 
$$M=M_+\times\{\pm1\}\times(\mathbb{C}^{\ast})^{2g-2}\sqcup M_-\times\{\pm\sqrt{-1}\}\times(\mathbb{C}^{\ast})^{2g-2},$$
where $\{\pm1\}$ and $\{\pm\sqrt{-1}\}$ are the solutions of $\beta_1^2=\pm\Id$. The induced $\Gamma$-action on $M_+\times\{\pm1\}\times(\mathbb{C}^{\ast})^{2g-2}$ is as follows. Let $(e_i)_{1\le i\le 2g}\in\Gamma$, then $e_2$ identifies the two components:
$$e_2:M_+\times\{+1\}\times(\mathbb{C}^{\ast})^{2g-2}\lisom M_+\times\{-1\}\times(\mathbb{C}^{\ast})^{2g-2},$$while $e_i$, $i\ne 2$, acts in the usual manner. Denote by $\Gamma_0$ the subgroup of $\Gamma$ consisting of elements whose $e_2$-component is trivial. We see that 
$$
(M_+\times\{\pm1\}\times(\mathbb{C}^{\ast})^{2g-2})/\Gamma\cong (M_+\times(\mathbb{C}^{\ast})^{2g-2})/\Gamma_0.
$$The same argument applies to $M_-$. We see that
$$
\Ch_{\mathcal{C}}\cong (M_+\times(\mathbb{C}^{\ast})^{2g-2})/\Gamma_0\sqcup (M_-\times(\mathbb{C}^{\ast})^{2g-2})/\Gamma_0.
$$

Considering the cohomology with compact support, we have 
$$
H_c((M_+\times(\mathbb{C}^{\ast})^{2g-2})/\Gamma_0)\cong H_c(M_+)^{\Gamma_0}\otimes H_c((\mathbb{C}^{\ast})^{2g-2}),
$$
since the action of $\Gamma_0$ on the cohomology of $(\mathbb{C}^{\ast})^{2g-2}$ is trivial. Denote by $\kappa$ the irreducible character of $\Gamma$ that is non trivial only on the $e_2$-component. Then 
$$
H_c(M_+)^{\Gamma_0}=H_c(M_+)_{st}\oplus H_c(M_+)_{\kappa},
$$
where $H_c(M_+)_{st}=H_c(M_+)^{\Gamma}$ is the $\Gamma$-invariant subspace and $H_c(M_+)_{\kappa}$ is the $\kappa$-isotypic component. This is also a direct sum of mixed Hodge structures. The mixed Hodge polynomial of $H_c(M_+)_{st}$ can be deduced from the mixed Hodge polynomial of the $\GL_2$-character variety defined by the conjugacy class $C_1$. The mixed Hodge polynomial of $H_c(M_+)_{\kappa}$ is equal to the mixed Hodge polynomial of 
$$
H_c(M_+)_{var}:=\bigoplus_{\substack{\gamma\in\Irr(\Gamma)\\\gamma\ne 1}}H_c(M_+)_{\gamma}
$$
divided by $(2^{2g}-1)$, since the mapping class group acts transitively on the set of non trivial irreducible characters of $\Gamma$. We call $H_c(M_+)_{var}$ the variant part of the cohomology.

\subsubsection{}
According to \cite[\S 4.4]{BY}, the Poincar\'e polynomial of the $\SL_2$-character variety with one regular semisimple conjugacy class is given by 
\begin{equation}
\frac{(1+t^3)^{2g}+t^{4g-1}(1+t)^{2g}\left((2g-1)t-2g\right)}{(t^2-1)^2}+(2^{2g}-1)t^{4g-2}(t+1)^{2g-2}.
\end{equation}
We deduce from this expression the Poincar\'e polynomial of the cohomology with compact support by Poincar\'e duality. The variant part is
\begin{equation}\label{var-SL2-P}
(2^{2g}-1)t^{6g-4}(t+1)^{2g-2}.
\end{equation}
On the other hand, using the character table of $\SL_2(q)$ in \cite[\S 15.9]{DM91}, we compute the E-polynomial of this character variety. The result is 
\begin{equation}
(q-1)^{2g-2}\left((2^{2g}-2)q^{2g-1}+q^{2g-1}(q+1)^{2g-1}+(q+1)^{2g-1}\right).
\end{equation}
The stable part of the E-polynomial can be computed using the character table of $\GL_2(q)$ in \cite[\S 15.9]{DM91}, and the result is
\begin{equation}
(q-1)^{2g-2}\left(-q^{2g-1}+q^{2g-1}(q+1)^{2g-1}+(q+1)^{2g-1}\right).
\end{equation}
We deduce the variant part of the E-polynomial:
\begin{equation}\label{var-SL2-E}
(2^{2g}-1)q^{2g-1}(q-1)^{2g-2}.
\end{equation}

The variant part of the Poincar\'e polynomial and the variant part of the E-polynomial have an obvious common deformation:
\begin{equation}\label{var-MHP}
(2^{2g-1}-1)t^{6g-4}q^{2g-1}(qt+1)^{2g-2}.
\end{equation}
If we write $z=-t\sqrt{q}$ and $w=1/\sqrt{q}$, then this is just $(2^{2g-1}-1)(t\sqrt{q})^{6g-4}(z-w)^{2g-2}$, with $6g-4$ equal to the dimension of the $\SL_2$-character variety.
\begin{Rem}
In principle, we can use the method of \cite[\S 4.4]{dCHM} to prove that each cohomological degree has only one weight, and then the variant part of the mixed Hodge polynomial is given by (\ref{var-MHP}). Since our goal is to check that the conjectural formula is consistent with known results, we omit the proof.
\end{Rem}

The stable part of the mixed Hodge polynomial can be computed using the conjectural formula of Hausel-Letellier-Rodriguez-Villegas. Taking into account the variant part, we see that the mixde Hodge polynomial of $\Ch_{\mathcal{C}}$ is given by
\begingroup
\allowdisplaybreaks
\begin{align*}
&2\frac{(z^3-w)^{2g}(z-w)^{2g-2}}{(z^4-1)(z^2-w^2)}(1+z^2)+2\frac{(z-w^3)^{2g}(z-w)^{2g-2}}{(z^2-w^2)(1-w^4)}(1+w^2)\\&-2\frac{(z-w)^{4g-2}}{(z^2-1)(1-w^2)}+2(z-w)^{4g-4}
\end{align*}
\endgroup
with $z=-t\sqrt{q}$ and $w=1/\sqrt{q}$, and multiplied by $(t\sqrt{q})^{6g-4}$. This is exactly the formula predicted by Conjecture \ref{The-Conj}.

\addtocontents{toc}{\protect\setcounter{tocdepth}{-1}}
\bibliographystyle{alpha}
\bibliography{BIB}
\end{document}